\newtheorem{theorem}{Theorem}[section]
\newtheorem{lemma}{Lemma}[section]
\newtheorem{corollary}{Corollary}[section]
\newtheorem{proposition}{Proposition}[section]
\begin{document}
\title{ A Multilevel Correction Type of Adaptive Finite Element Method for Eigenvalue Problems}
\date{September 30, 2011}
\author{
 Hehu Xie\footnote{LSEC, Institute
of Computational Mathematics, Academy of Mathematics and Systems
Science, Chinese Academy of Sciences, Beijing 100190,
China(hhxie@lsec.cc.ac.cn)}}
\maketitle
\begin{abstract}
A type of adaptive finite element method for the eigenvalue problems is proposed
based on the multilevel correction scheme. In this method, adaptive finite element
 method to solve eigenvalue problems involves solving associated boundary value problems
 on the adaptive partitions and small scale eigenvalue problems on the coarsest partitions.
 Hence the efficiency of solving eigenvalue problems can be improved to be similar to the
 adaptive finite element method for the associated boundary value problems.
 The convergence and optimal complexity is theoretically verified and numerically demonstrated.
\end{abstract}

{\bf keywords.}\ Eigenvalue problem, multilevel correction, adaptive finite element method, convergence,
optimality

{\bf AMS Subject Classification:} 65N30,  65N15, 35J25

\section{Introduction}
The finite element method is one of the widely used discretization schemes for solving eigenvalue problems.
The adaptive finite element method (AFEM) is a meaningful approach which can generate a sequence of optimal triangulations by refining those elements where the errors, as the local error estimators indicate, are relatively large. The
AFEM is really an effective way to make efficient use of given computational resources.
 Since Babu\v{s}ka and Rheinboldt
\cite{BabuskaRheinboldt}, the AFEM has been an active topic, many researchers are attracted to study the AFEM (see, e.g., \cite{ArnoldMukherjeePouly,BabuskaRheinboldt,BabuskaVogelius,BartelsCarstensen, Carstensen,ChenNochetto,MorinNochettoSiebert_2002,Nochetto,WuChen,YanZhou} and the references cited therein) in the last 30 years. So far, the convergence and optimality of the AFEM for boundary value problems has been obtained and
understood well (see, e.g.,  \cite{Dofler,DoflerWilderotter,MorinNochettoSiebert_2000,MorinNochettoSiebert_2002,Veeser,BinevDahmenDeVore,
MekchayNochetto,Stevenson_2007,Stevson_2008,CasconKreuzerNochettoSiebert,CarstensenHoppe,ChenHolstXu,CarstensenBartels} and the references cited therein).

Besides for the boundary value problems, the AFEM is also a very useful and efficient
way for solving eigenvalue problems (see, e.g., \cite{BeckerRannacher,ChenGongHeZhou, DuranPadraRodrguez,HeZhou,HeuvelineRannacher,Larson,MaoShenZhou,Verfurth}). The AFEM for eigenvalue
problems has been analyzed in some papers (see, e.g., \cite{DaiXuZhou,GianiGraham,HeuvelineRannacher} and the reference cited therein). Especially, \cite{DaiXuZhou,HeZhou} give an elaborate analysis of the convergence and optimality for the adaptive finite element eigenvalue computation. In \cite{GianiGraham}, authors also give the analysis of the convergence for the eigenvalue problems by the AFEM.

The purpose of this paper is to propose and analyze a type of AFEM
to solve the eigenvalue problems based on the recent work on the multilevel correction method
(see \cite{LinXie}) and  the two-grid correction method (see \cite{XuZhou_2001}).
In this new scheme, the cost of solving  eigenvalue problems is almost the same as solving the associated boundary value problems.
Here, we adopt the techniques in \cite{DaiXuZhou,HeZhou,CasconKreuzerNochettoSiebert} to prove the convergence and optimal complexity of the new AFEM for the eigenvalue problems. Our analysis is also
based on the relationship between the finite element eigenvalue approximation and the associated boundary value problem approximation (c.f. \cite{DaiXuZhou,HeZhou}).

The rest of the paper is arranged as follows. In Section 2, we shall describe some basic notation and the AFEM for
the second order elliptic problems. In Section 3, we introduce a type of AFEM for the second order elliptic
 eigenvalue problems based on the multilevel correction scheme.
The convergence analysis of this type of AFEM for eigenvalue problems will be given in Section 4 and Section 5 is devoted to proving the corresponding optimal complexity. In Section 6, some numerical experiments are presented to test the theoretical analysis. Finally, some concluding remarks are given in the last section.

\section{Preliminaries}
In this section, we introduce some basic notation and some useful results of AFEM for the second order elliptic
boundary value problem.

Let $\Omega\subset \mathcal{R}^d\ \ (d\geq 1)$ denotes a polytopic bounded domain. We use the standard notation for Sobolev space $W^{s,p}(\Omega)$ and their associated norms and seminorms  (see e.g, \cite{Adams,Ciarlet}). We  denote $H^{s}(\Omega)=W^{s,2}(\Omega)$ and $H_0^{1}(\Omega)=\big\{v\in H^{1}(\Omega):v|_{\partial \Omega}=0\big\}$, where $v|_{\partial \Omega}$ is understood in the sense of trace, $\|v\|_{s,\Omega}=\|v\|_{s,2,\Omega}$ and $\|v\|_{0,\Omega}=\|v\|_{0,2,\Omega}$. For simplicity, following Xu \cite{Xu}, we use the symbol $\lesssim$ in this paper. The notation $A\lesssim B$ means that $A\leq CB$ for some constant $C$ independent of the mesh sizes. Throughout this paper, we shall use $C$ to denote a generic positive constant, independent of the mesh sizes,
  which may varies at its different occurrences.  We consider finite element discretization on the shape regular family of nested conforming meshes $\{\mathcal{T}_h\}$ over $\Omega$: there exists a constant $\gamma*$ such that
$$\frac{h_{T}}{\rho_T}\leq \gamma*\ \ \ \ \  \forall T\in\bigcup\limits_{h}\mathcal{T}_h,$$
where $h_T$ denotes the diameter of $T$ for each $T\in \mathcal{T}_h$, and $\rho_T$ is the diameter of the biggest ball
contained in $T$, $h:=\max\{h_T:T\in \mathcal{T}_h\}$. In this paper, we use $\mathcal{E}_h$ to denote the set of interior faces (edges or sides) of $\mathcal{T}_h$.

The following lemma is a result of Sobolev trace theorem (e.g. \cite{Adams,SchneiderXuZhou}).
\begin{lemma}\label{Trace_Inequality_Lemma}
 If $s>1/2$, then
\begin{eqnarray}\label{Trace_Inequality}
\|v\|_{0,\partial T}\lesssim h_T^{-1/2}\|v\|_{0,T}+h_T^{s-1/2}\|v\|_{s,T}\ \ \ \forall v\in H^s(T),\ T\in \mathcal{T}_h.
\end{eqnarray}
\end{lemma}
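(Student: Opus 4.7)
The plan is to prove the estimate by scaling from a fixed reference element, which is the standard route for such trace inequalities and naturally produces the $h_T^{-1/2}$ and $h_T^{s-1/2}$ factors.

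First I would fix a reference simplex $\hat T$ (of unit size) and invoke the classical Sobolev trace theorem on $\hat T$: for $s>1/2$ there exists a continuous trace operator $H^s(\hat T)\to L^2(\partial\hat T)$, so that
\begin{equation*}
\|\hat v\|_{0,\partial\hat T}\lesssim \|\hat v\|_{0,\hat T}+|\hat v|_{s,\hat T}\qquad\forall\,\hat v\in H^s(\hat T).
\end{equation*}
This is a fixed-domain statement with no mesh parameter, so the constant depends only on $\hat T$ and $s$.

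Next I would pass to a generic $T\in\mathcal T_h$ via an affine map $F_T:\hat T\to T$, $F_T(\hat x)=B_T\hat x+b_T$, and set $\hat v:=v\circ F_T$. Using shape regularity of $\{\mathcal T_h\}$ through the constant $\gamma*$, the standard estimates $\|B_T\|\lesssim h_T$, $\|B_T^{-1}\|\lesssim h_T^{-1}$, $|\det B_T|\simeq h_T^d$, and the analogous bound $|\det B_T|/|\text{cofactor}|\simeq h_T$ on $(d-1)$-dimensional faces yield the scalings
\begin{equation*}
\|v\|_{0,T}\simeq h_T^{d/2}\|\hat v\|_{0,\hat T},\quad |v|_{s,T}\lesssim h_T^{d/2-s}|\hat v|_{s,\hat T},\quad \|v\|_{0,\partial T}\simeq h_T^{(d-1)/2}\|\hat v\|_{0,\partial\hat T}.
\end{equation*}
Applying the reference inequality to $\hat v$ and substituting these scaling relations gives exactly
\begin{equation*}
\|v\|_{0,\partial T}\lesssim h_T^{-1/2}\|v\|_{0,T}+h_T^{s-1/2}|v|_{s,T}\le h_T^{-1/2}\|v\|_{0,T}+h_T^{s-1/2}\|v\|_{s,T}.
\end{equation*}

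The step I expect to be the most delicate is the scaling of the $H^s$-seminorm for non-integer $s$, where $|v|_{s,T}$ is the Sobolev--Slobodeckij seminorm built from a double integral involving the factor $|x-y|^{-d-2(s-\lfloor s\rfloor)}$. One must change variables in both the inner and outer integrals, combine the $|\det B_T|^2$ Jacobian with the $\lfloor s\rfloor$ derivative factors (each contributing $\|B_T^{-1}\|^2$) and with the kernel's homogeneity in $|x-y|$; shape regularity is what allows the two-sided bound $|B_T\hat z|\simeq h_T|\hat z|$ to enter cleanly and produce the claimed $h_T^{d/2-s}$. Once this scaling is in hand the proof is mechanical. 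For integer $s$ the argument is the familiar Bramble--Hilbert/scaling computation and presents no difficulty.
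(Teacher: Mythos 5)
Your scaling argument is correct and is precisely what the paper's citation to the Sobolev trace theorem presupposes; the paper itself offers no proof of this lemma, only references. One small directional slip: you state $|v|_{s,T}\lesssim h_T^{d/2-s}|\hat v|_{s,\hat T}$, but what the substitution into the reference-element trace inequality actually uses is the reverse bound $|\hat v|_{s,\hat T}\lesssim \|B_T\|^{s}|\det B_T|^{-1/2}|v|_{s,T}\lesssim h_T^{s-d/2}|v|_{s,T}$; under shape regularity both directions hold, so this does not affect the validity of the argument.
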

For $f\in L^2(\Omega)$, we define the {\bf data oscillation} (see, e.g., \cite{MekchayNochetto,MorinNochettoSiebert_2002}) by
\begin{eqnarray}\label{Oscillation_Def}
\hskip-0.3cm
{\rm osc}(f,\mathcal{T}_h)&:=& \|h(f-f_h)\|_{0,\Omega}=\left(\sum_{T\in\mathcal{T}_h}\|h_T(f-\bar{f}_T)\|_{0,T}^2\right)^{1/2},
\end{eqnarray}
where $f_h$ denotes a piecewise polynomial approximation of $f$ over $\mathcal{T}_h$
and $\bar{f}_T=f_h|_T$. We will denote $\bar{f}_T$ be the $L^2$ projection of $f$ onto polynomials of some degree, which leads to the following inequality (see \cite{DaiXuZhou}):
\begin{eqnarray}\label{Osc_Sum_inequality}
\text{osc}(f_1+f_2,\mathcal{T}_h)\leq\text{osc}(f_1,\mathcal{T}_h)+\text{osc}(f_2,\mathcal{T}_h)\ \  \ \ \forall f_1,f_2\in L^2(\Omega).
\end{eqnarray}

\subsection{Short survey on linear elliptic problem with adaptive method}\label{Survey_Elliptic_Problem}
In this subsection, we shall present some basic results of AFEM for the second order elliptic
boundary value problem. 
Here, for simplicity, we consider the homogeneous boundary value problem:
\begin{eqnarray}\label{sourceproblem}
\left\{
\begin{array}{rcl}
Lu:=-\nabla (A\cdot \nabla u)+\varphi u&=&f\ \ \ {\rm in}\ \Omega,\\
u&=&0\ \ \ {\rm on}\ \partial\Omega,
 \end{array} \right.
\end{eqnarray}
where $A=(a_{ij})_{d\times d}$ is a symmetric positive definite matrix with $a_{ij}\in W^{1,\infty}(\Omega)\  (i,j=1,2\cdots d)$, and $0\leq \varphi \in L^{\infty}(\Omega)$.

The weak form of (\ref{sourceproblem}) is: Find $u\in H_{0}^{1}(\Omega)$ such that
\begin{eqnarray}\label{Source_Problem_Weak}
a(u,v)=(f,v)\ \ \ \forall v\in H_{0}^{1}(\Omega),
\end{eqnarray}
where the bounded bilinear form is defined by
\begin{eqnarray}
a(u,v)=\int_{\Omega}\big(\nabla u\cdot\nabla v+\varphi u v\big)d\Omega.
\end{eqnarray}
From the properties of $A$ and $\varphi$, the bilinear form $a(\cdot,\cdot)$ is bounded over $H_{0}^{1}(\Omega)$
$$|a(w,v)|\leq C_{a}\|w\|_{a,\Omega}\|v\|_{a,\Omega}\ \ \ \forall w,v \in H_{0}^{1}(\Omega),$$
and satisfies $$c_{a}\|w\|_{1,\Omega} \leq \|w\|_{a,\Omega} \leq C_{a} \|w\|_{1,\Omega},$$
where the energy norm $\|\cdot \|_{a,\Omega}$ is defined by $\| w \|_{a,\Omega}=\sqrt{a(w,w)} $, $c_{a}$ and $C_{a}$
are positive constants.  From these properties, it is well known that (\ref{Source_Problem_Weak}) has a unique solution $u\in H_0^1(\Omega)$
for any $f\in H^{-1}(\Omega)$.

Let $V_h\subset H_0^1(\Omega)$ be the corresponding family of nested finite element spaces  of continuous
piecewise polynomials over $\mathcal{T}_h$ of fixed degree $m\geq 1$, which vanish on the boundary of $\Omega$,
and are equipped with the same norm $\| \cdot \|_{a,\Omega}$ of space $H_0^1(\Omega)$.

Based on the finite element space $V_h$, we define the finite element scheme for (\ref{sourceproblem}): Find $u_h\in V_h$ such that
\begin{eqnarray}\label{dissource}
 a(u_h,v_h)=(f,v_h)\ \ \ \ \forall v_h\in V_h.
\end{eqnarray}
Define the Galerkin projection $R_h:H^1_0(\Omega)\rightarrow V_h$ by
\begin{eqnarray}\label{projection}
a(u-R_hu,v_h)=0\ \ \ \  \forall v_h\in V_h,
\end{eqnarray}
then we have $u_h=R_h u$ and
\begin{eqnarray}\label{Proj_Bounded}
\|R_h u\|_{a,\Omega}\leq \|u\|_{a,\Omega} \ \ \ \ \ \forall u\in H_0^1(\Omega).
\end{eqnarray}
From (\ref{Proj_Bounded}), it is easy to get the global a priori error estimates for the
finite element approximation based on the approximate properties of the finite element space $V_h$ (c.f. \cite{Ciarlet,XuZhou_2001}).

In order to simplify the notation, we introduce the quantity $\eta_a(h)$ as follows:
\begin{eqnarray*}
\eta_a(h)=\sup\limits_{f\in L^2(\Omega),\|f\|_{0,\Omega}=1}\inf\limits_{v_h\in V_h}\|L^{-1}f-v_h\|_{a,\Omega}.
\end{eqnarray*}
From \cite{BabuskaOsborn_1989,Ciarlet,SchneiderXuZhou}, it is known that $\eta_a(h)\rightarrow 0 \ \text{as} \ \ h\rightarrow 0 $ and the following propositions hold.
\begin{proposition}
\begin{eqnarray}
\|(I-R_h)L^{-1}f\|_{a,\Omega}\lesssim \eta_a(h)\|f\|_{0,\Omega}\ \ \ \ \ \forall f\in L^2(\Omega)\label{proposition1}
\end{eqnarray}
and
\begin{eqnarray}
\|u-R_h u\|_{0,\Omega}\lesssim \eta_a(h)\|u-R_h u\|_{a,\Omega}\ \ \ \ \ \forall u\in H_0^1(\Omega).\label{proposition2}
\end{eqnarray}
\end{proposition}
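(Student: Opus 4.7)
The plan is to treat the two estimates separately: the first follows directly from the Galerkin-orthogonality (Céa-type) property of $R_h$, and the second is a standard Aubin--Nitsche duality argument that uses the first.

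For (\ref{proposition1}), set $u=L^{-1}f\in H_0^1(\Omega)$, so that $u$ is the weak solution of the source problem with right-hand side $f\in L^2(\Omega)$. By the definition of $R_h$ in (\ref{projection}), $R_hu$ is the $a$-orthogonal projection of $u$ onto $V_h$; hence for any $v_h\in V_h$ Galerkin orthogonality gives
\[
\|u-R_hu\|_{a,\Omega}^2=a(u-R_hu,u-v_h)\leq \|u-R_hu\|_{a,\Omega}\|u-v_h\|_{a,\Omega},
\]
so $\|u-R_hu\|_{a,\Omega}\leq \inf_{v_h\in V_h}\|u-v_h\|_{a,\Omega}$. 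Taking the infimum on the right and invoking the very definition of $\eta_a(h)$ with the test function $f/\|f\|_{0,\Omega}$ (assuming $f\neq 0$; the case $f=0$ is trivial) yields
\[
\|(I-R_h)L^{-1}f\|_{a,\Omega}\leq \eta_a(h)\|f\|_{0,\Omega},
\]
which is (\ref{proposition1}).

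For (\ref{proposition2}), I would use the symmetry of $a(\cdot,\cdot)$ (which follows from the symmetry of $A$) to set up the dual problem. Given $u\in H_0^1(\Omega)$, put $e:=u-R_hu\in H_0^1(\Omega)$ and let $w\in H_0^1(\Omega)$ solve the auxiliary problem $a(v,w)=(e,v)$ for all $v\in H_0^1(\Omega)$, i.e.\ $w=L^{-1}e$. Using $v=e$, Galerkin orthogonality (\ref{projection}) against the test function $R_hw\in V_h$, and boundedness of $a(\cdot,\cdot)$:
\[
\|e\|_{0,\Omega}^2=a(e,w)=a(e,w-R_hw)\leq C_a\|e\|_{a,\Omega}\|w-R_hw\|_{a,\Omega}.
\]
Applying (\ref{proposition1}) to the right-hand side with datum $e\in L^2(\Omega)$ gives $\|w-R_hw\|_{a,\Omega}=\|(I-R_h)L^{-1}e\|_{a,\Omega}\lesssim \eta_a(h)\|e\|_{0,\Omega}$. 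Dividing by $\|e\|_{0,\Omega}$ (or observing the inequality trivially holds if $e=0$) yields (\ref{proposition2}).

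Neither step looks particularly delicate; the main point to watch is simply that the statement of $\eta_a(h)$ is written in terms of the $a$-norm (which is equivalent to $\|\cdot\|_{1,\Omega}$) and that symmetry of $a$ is available, so the dual problem has the same operator $L$ appearing in the definition of $\eta_a$. The only mild subtlety is the application of $\eta_a(h)$ to the function $e$ in the duality step: this is legitimate because $e\in H_0^1(\Omega)\subset L^2(\Omega)$ and the estimate (\ref{proposition1}) was just proved for every $L^2$-datum, so no extra regularity on $u$ is required.
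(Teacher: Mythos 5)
Your proof is correct. The paper does not actually supply a proof of this Proposition---it simply cites \cite{BabuskaOsborn_1989,Ciarlet,SchneiderXuZhou}---and your argument (best approximation in the $a$-norm from Galerkin orthogonality for (\ref{proposition1}), then Aubin--Nitsche duality using the symmetry of $a(\cdot,\cdot)$ for (\ref{proposition2})) is precisely the standard one those references use; note only that, since $\|\cdot\|_{a,\Omega}=\sqrt{a(\cdot,\cdot)}$, $R_h$ is the $a$-orthogonal projection and the Céa step holds with constant one, so the $C_a$ in your duality estimate is not actually needed.
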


Next we follow the classic routine to define the a posteriori error estimator for finite element problem ($\ref{dissource}$). Let us define the element residual $\tilde{\mathcal{R}}_T(u_h)$ and the jump residual $\tilde{\mathcal{J}}_E(u_h)$ by
\begin{eqnarray}
\hskip-0.4cm\tilde{\mathcal{R}}_T(u_h):=f-Lu_h=f+\nabla\cdot(A\nabla u_h)-\varphi u_h \ \ \ {\rm in}\ \ T\in \mathcal{T}_h,\label{Residual_Error}\\
\hskip-0.4cm\tilde{\mathcal{J}}_E(u_h):=-A\nabla u_h^+\cdot\nu^+-A\nabla u_h^-\cdot\nu^-:=[[A\nabla u_h]]_E\cdot \nu_E \ \ {\rm on} \ \ E\in \mathcal{E}_h,\label{Edge_Jump}
\end{eqnarray}
where $E$ is the common side of elements $T^{+}$ and $T^-$ with outward normals $\nu^+$ and $\nu^-$, $\nu_E=\nu^-$, and
$\omega_E:=T^+\cap T^-$ that share the same side $E$.

For the element $T\in\mathcal{T}_h$, we define the local error indicator $\tilde{\eta}_h(u_h,T)$ by
\begin{eqnarray}
 \tilde{\eta}_h(u_h,T):=\left(h_T^2\|\tilde{\mathcal{R}}_T(u_h)\|_{0,T}^2
 +\sum\limits_{E\in \mathcal{E}_h,E\subset \partial T}h_E\|\tilde{\mathcal{J}}_E(u_h)\|^2_{0,E}\right)^{1/2},
\end{eqnarray}
and the error indicator for a subdomain $\omega\subset\Omega$ by
\begin{eqnarray}
 \tilde{\eta}_h(u_h,\omega):=\left(\sum_{T\in \mathcal{T}_h,T\subset w}\tilde{\eta}_h^2(u_h,T)\right)^{1/2}.
\end{eqnarray}
Thus $\tilde{\eta}_h(u_h,\Omega)$ denotes the error estimator of $\Omega$ with respect to $\mathcal{T}_h$.

Now we summarize the reliability and efficiency of the a posterior error estimator (see, e.g., \cite{MekchayNochetto,MorinNochettoSiebert_2002,Verfurth}):
\begin{lemma}(\cite{DaiXuZhou})
When $h_0$ is small enough,  there exist mesh independent constants $\tilde{C}_1,\hat{C}_2,\tilde{C}_3$ such that
\begin{eqnarray}\label{Upper_eta_Source_problem}
 \|u-u_h\|_{a,\Omega}\leq \tilde{C}_1\tilde{\eta}_h(u_h,\Omega),
\end{eqnarray}
and , for any $T\in\mathcal{T}_h$
\begin{eqnarray}\label{Lower_Bound_T}
\hat{C}_2^2\tilde{\eta}_h^2(u_h,T)-\tilde{C}_3^2 \sum\limits_{T\in\mathcal{T}_h,T\subset \omega_T} h_T^2\|\tilde{\mathcal{R}}_T(u_h)-\overline{\tilde{\mathcal{R}}_T(u_h)}\|_{0,T}^2\leq \|u-u_h\|^2_{a,\omega_T},
\end{eqnarray}
where $\omega_T$ contains all the elements that share at least a side with $T$, $\tilde{C}_1,\hat{C}_2,\tilde{C}_3>0$ depends only on the shape regularity $\gamma^*$, $C_a$ and $c_a$, $\bar{\omega}$ is the $L^2$-projection of $w$ onto polynomials of degree $m$ on $T$.

 As a consequence of (\ref{Lower_Bound_T}), we have
\begin{eqnarray}\label{Lower_Bound_Global_1}
\tilde{C}_2^2\tilde{\eta}_h^2(u_h,\Omega)-\tilde{C}_3^2{\rm osc}(f-L(u_h),\mathcal{T}_h)^2\leq \|u-u_h\|^2_{a,\Omega},
\end{eqnarray}
where $\tilde{C}_2=\frac{\hat{C}_2}{\check{C}_0}$ and $\check{C}_0$ is a constant depending
 on the shape regularity of the mesh $\mathcal{T}_h$.
\end{lemma}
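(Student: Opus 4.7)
This is the classical residual-based a posteriori estimator analysis (in the style of Verfürth / Morin--Nochetto--Siebert), and the plan is to establish reliability by a Galerkin-orthogonality plus quasi-interpolation argument, and efficiency by the bubble-function technique. The upper bound (\ref{Upper_eta_Source_problem}) follows by writing $\|u-u_h\|_{a,\Omega}^2 = a(u-u_h,u-u_h)$, using Galerkin orthogonality (\ref{projection}) to replace $u-u_h$ in the second slot by $(u-u_h) - I_h(u-u_h)$ where $I_h:H_0^1(\Omega)\to V_h$ is a Scott--Zhang or Clément quasi-interpolant, and then doing element-wise integration by parts. This gives
\begin{equation*}
a(u-u_h,w - I_h w) = \sum_{T\in\mathcal{T}_h}\int_T \tilde{\mathcal{R}}_T(u_h)(w-I_h w)\,dx + \sum_{E\in\mathcal{E}_h}\int_E \tilde{\mathcal{J}}_E(u_h)(w-I_h w)\,dS,
\end{equation*}
with $w = u - u_h$. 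Applying Cauchy--Schwarz together with the standard interpolation estimates $\|w - I_h w\|_{0,T}\lesssim h_T\|w\|_{1,\omega_T}$ and $\|w-I_h w\|_{0,E}\lesssim h_E^{1/2}\|w\|_{1,\omega_E}$ (Lemma \ref{Trace_Inequality_Lemma} is used for the edge term), and finally invoking the finite overlap of the patches $\omega_T,\omega_E$, one divides by $\|u-u_h\|_{a,\Omega}$ to obtain (\ref{Upper_eta_Source_problem}).

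For the local lower bound (\ref{Lower_Bound_T}) I would use the Verfürth bubble-function trick. On an element $T$, let $b_T\in H_0^1(T)$ be the polynomial bubble normalized by $\|b_T\|_\infty = 1$, and write $\bar{\mathcal{R}}_T := \overline{\tilde{\mathcal{R}}_T(u_h)}$ for the $L^2$-projection onto polynomials of degree $m$. The inverse/norm-equivalence estimates for bubbles give $\|\bar{\mathcal{R}}_T\|_{0,T}^2 \lesssim (b_T \bar{\mathcal{R}}_T,\bar{\mathcal{R}}_T)_T$. Splitting $\bar{\mathcal{R}}_T = \tilde{\mathcal{R}}_T(u_h) + (\bar{\mathcal{R}}_T - \tilde{\mathcal{R}}_T(u_h))$, using $(\tilde{\mathcal{R}}_T(u_h),v)_T = a(u-u_h,v)_T$ for $v = b_T \bar{\mathcal{R}}_T\in H_0^1(\Omega)$ (from (\ref{Source_Problem_Weak})), and the inverse estimate $\|v\|_{a,T}\lesssim h_T^{-1}\|v\|_{0,T}$, I would absorb terms to obtain
\begin{equation*}
h_T^2\|\tilde{\mathcal{R}}_T(u_h)\|_{0,T}^2 \lesssim \|u-u_h\|_{a,T}^2 + h_T^2\|\tilde{\mathcal{R}}_T(u_h)-\bar{\mathcal{R}}_T\|_{0,T}^2.
\end{equation*}
For the jump term I would repeat the argument with an edge bubble $b_E$ extended by zero to $\omega_E$: testing with $v_E = b_E \mathcal{P}\tilde{\mathcal{J}}_E(u_h)$ (polynomial extension), integration by parts on the two elements composing $\omega_E$ produces the jump plus interior residuals, leading to $h_E\|\tilde{\mathcal{J}}_E(u_h)\|_{0,E}^2 \lesssim \|u-u_h\|_{a,\omega_E}^2 + \sum_{T'\subset\omega_E} h_{T'}^2\|\tilde{\mathcal{R}}_{T'}(u_h)-\bar{\mathcal{R}}_{T'}\|_{0,T'}^2$. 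Summing the element residual bound on $T$ with the jump bound on all $E\subset\partial T$ and using shape regularity yields (\ref{Lower_Bound_T}).

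The global bound (\ref{Lower_Bound_Global_1}) is then obtained by summing (\ref{Lower_Bound_T}) over $T\in\mathcal{T}_h$. Since each patch $\omega_T$ contributes to at most a bounded number $\check{C}_0$ of sums (by shape regularity), and since $\tilde{\mathcal{R}}_T(u_h) = (f - Lu_h)|_T$ so that the collected oscillation terms coincide with $\text{osc}(f-Lu_h,\mathcal{T}_h)^2$ by definition (\ref{Oscillation_Def}), I absorb the combinatorial factor into $\tilde{C}_2 = \hat{C}_2/\check{C}_0$ and get (\ref{Lower_Bound_Global_1}). The main obstacles are technical rather than conceptual: the delicate point is producing a quasi-interpolant $I_h$ that simultaneously respects the homogeneous Dirichlet condition and admits the trace/patch estimates used in the reliability proof, and tracking the constants in the bubble arguments so that they depend only on the shape regularity $\gamma^\ast$, $C_a$ and $c_a$. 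Everything else reduces to element-wise Cauchy--Schwarz and inverse inequalities.
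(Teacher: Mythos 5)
The paper does not prove this lemma; it is quoted directly from \cite{DaiXuZhou} (and ultimately goes back to the residual-based a posteriori theory of Verf\"urth and Morin--Nochetto--Siebert). Your sketch is the standard route used in those references: Galerkin orthogonality with a Cl\'ement/Scott--Zhang quasi-interpolant for the reliability bound (\ref{Upper_eta_Source_problem}), element and edge bubble functions for the local efficiency estimate (\ref{Lower_Bound_T}), and a finite-overlap counting argument to pass to the global estimate (\ref{Lower_Bound_Global_1}) with the oscillation term ${\rm osc}(f-Lu_h,\mathcal{T}_h)$ arising from replacing $\tilde{\mathcal{R}}_T(u_h)$ by its $L^2$-projection. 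This is correct and is essentially the argument the cited source relies on, so there is nothing to add beyond noting that the paper itself defers to the literature here.
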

It is obvious that if the right hand side term $f$ of (\ref{sourceproblem}) is a piecewise polynomial function over $\mathcal{T}_h$, (\ref{Lower_Bound_Global_1}) can be simplified to
\begin{eqnarray}\label{Lower_Bound_Global_2}
\tilde{C}_2^2\tilde{\eta}_h^2(u_h,\Omega)-\tilde{C}_3^2\text{osc}(L(u_h),\mathcal{T}_h)^2\leq \|u-u_h\|^2_{a,\Omega}.
\end{eqnarray}

There are adaptive algorithms in \cite{CasconKreuzerNochettoSiebert,Dofler,MekchayNochetto,MorinNochettoSiebert_2002} to solve (\ref{dissource}), which introduce two type of marking strategies to promise reduction of both error and oscillation. For these two type of methods, both convergence and optimal complexity of the adaptive finite element algorithm have been obtained (see, e.g., \cite{Dofler,MekchayNochetto,MorinNochettoSiebert_2002,Stevenson_2007,Stevson_2008}). However, the oscillation marking is not necessary which has been proved by Cascon et al. \cite{CasconKreuzerNochettoSiebert}. Thus, the adaptive algorithm without oscillation marking which is adopted in this paper can be stated as follows (c.f. \cite{CasconKreuzerNochettoSiebert,DaiXuZhou}).

\begin{framed}
\begin{center}
{\bf  Adaptive Algorithm $C_0$}
\end{center}
\noindent
Choose parameter $0<\theta<1$:\\
\noindent
1. Let $k=0$, pick an initial mesh $\mathcal{T}_{h_0}$ and start the loop.\\
2. On the  mesh $\mathcal{T}_{h_k}$, solve the problem (\ref{dissource}) for the discrete solution $u_{h_k}$.\\
3. Compute the local indicators $\tilde{\eta}_{h_k}(u_h,T)$.\\
4. Construct the submesh $\widehat{\mathcal{T}}_{h_k}\subset \mathcal{T}_{h_k}$ by
\textbf{Marking Strategy $E_0$} with parameters $\theta$.\\
5. Refine $\mathcal{T}_{h_k}$ to generate a new conforming mesh $\mathcal{T}_{h_{k+1}}$ by procedure {\bf REFINE}.\\
6. Let $k=k+1$ and go to step 2.\\
\end{framed}
As in \cite{CasconKreuzerNochettoSiebert}, the procedure {\bf REFINE} used in {\bf Adaptive Algorithm $C_0$} is not required  to satisfy the Interior Node Property of \cite{MekchayNochetto,MorinNochettoSiebert_2002}.  Here we use the iterative or recursive
 bisection (see, e.g., \cite{Maubach,Traxler}) of elements with the minimal refinement condition in the procedure
 \textbf{REFINE}. The \textbf{Marking Strategy} adopted in \textbf{Adaptive Algorithm $C_0$} was introduced by
  D\"{o}rfler \cite{Dofler} and Morin et al. \cite{MorinNochettoSiebert_2002} and can be defined as follows.
\begin{framed}
\begin{center}
{\bf Marking Strategy $E_0$}
\end{center}

\noindent Given parameter $0<\theta<1$:\\
\noindent
1. Construct a minimal subset $\widehat{\mathcal{T}}_H$ from $\mathcal{T}_H$ by
selecting some elements in $\mathcal{T}_H$ such that
\begin{eqnarray*}
 \sum\limits_{T\in \widehat{\mathcal{T}}_H}\tilde{\eta}^2_H(u_h,T)\geq\theta\tilde{\eta}^2_H(u_h,\Omega).
\end{eqnarray*}
2. Mark all the elements in $\widehat{\mathcal{T}}_H$.
\end{framed}

In order to analyze the convergence of the AFEM, we need the following lemma.
\begin{lemma}(\cite{DaiXuZhou})\label{Osc_Bounded_Lemma}
 There exits a constant $C_*$ only depending on the equation parameters and the mesh regularity $\gamma^*$ such that
\begin{eqnarray}\label{Osc_Bounded}
 {\rm osc}(L(v_h),\mathcal{T}_h)\leq C_L\|v_h\|_{a,\Omega}\ \  \ \ \forall v_h\in V_h.
\end{eqnarray}
\end{lemma}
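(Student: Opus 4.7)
The plan is to estimate each term of $L(v_h) = -\nabla\cdot(A\nabla v_h) + \varphi v_h$ separately, exploiting the fact that the $L^2$-projection is a contraction (so $\|g - \bar{g}_T\|_{0,T} \le \|g - p\|_{0,T}$ for any polynomial $p$ of the projection degree, and in particular $\|g - \bar{g}_T\|_{0,T} \le \|g\|_{0,T}$). Combined with the sublinearity (\ref{Osc_Sum_inequality}), it suffices to prove
\[
\sum_{T \in \mathcal{T}_h} h_T^2 \|\varphi v_h\|_{0,T}^2 \;+\; \sum_{T \in \mathcal{T}_h} h_T^2 \|\nabla\cdot(A\nabla v_h)\|_{0,T}^2 \;\lesssim\; \|v_h\|_{a,\Omega}^2.
\]

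For the reaction term, I would simply use $\varphi \in L^\infty(\Omega)$ together with the uniform bound $h_T \le h_0$ (or the diameter of $\Omega$), yielding
$\sum_T h_T^2 \|\varphi v_h\|_{0,T}^2 \le \|\varphi\|_{L^\infty}^2 h_0^2 \|v_h\|_{0,\Omega}^2 \lesssim \|v_h\|_{a,\Omega}^2$, where the last inequality uses the Poincaré/coercivity bound $\|v_h\|_{0,\Omega} \lesssim \|v_h\|_{1,\Omega} \lesssim \|v_h\|_{a,\Omega}$.

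For the diffusion term, I would expand $\nabla\cdot(A\nabla v_h) = (\nabla\cdot A)\cdot\nabla v_h + A:\nabla^2 v_h$ on each $T$. Since $a_{ij} \in W^{1,\infty}(\Omega)$, both $A$ and $\nabla\cdot A$ are in $L^\infty$, so
\[
\|\nabla\cdot(A\nabla v_h)\|_{0,T} \;\lesssim\; \|\nabla v_h\|_{0,T} + \|\nabla^2 v_h\|_{0,T}.
\]
Because $v_h|_T$ is a polynomial of fixed degree $m$, the standard inverse inequality on shape regular meshes gives $\|\nabla^2 v_h\|_{0,T} \lesssim h_T^{-1} \|\nabla v_h\|_{0,T}$. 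Multiplying by $h_T^2$ and summing over $T$ produces $\sum_T h_T^2 \|\nabla\cdot(A\nabla v_h)\|_{0,T}^2 \lesssim \|\nabla v_h\|_{0,\Omega}^2 \lesssim \|v_h\|_{a,\Omega}^2$.

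The only mildly subtle point, and hence the main obstacle, is the second derivative term $A:\nabla^2 v_h$: the coefficient $A$ is not a polynomial, so $L(v_h)$ is not in the polynomial space targeted by the $L^2$-projection, and one cannot simply discard it by choosing a clever test polynomial $p$. The right move is to forgo the projection altogether (take $p = 0$), absorb the factor $h_T$ against the inverse estimate to compensate for the extra derivative, and rely on the uniform $L^\infty$ control of the coefficients to pull $\|A\|_{L^\infty}$ out of the local norm. All constants depend only on $\gamma^*$, $\|A\|_{W^{1,\infty}}$, $\|\varphi\|_{L^\infty}$, $m$, and $\mathrm{diam}(\Omega)$, giving the desired $C_L$.
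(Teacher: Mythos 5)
Your argument is correct and matches the route the paper itself takes when it proves the companion bound (\ref{eta_H_1_Bounded}) in Lemma \ref{Eta_Sum_Bounded_Lemma}: estimate $\|L(v_h)\|_{0,T}\le C_A h_T^{-1}\|\nabla v_h\|_{0,T}+C_c\|v_h\|_{0,T}$ using the local inverse estimate together with the $W^{1,\infty}$ and $L^{\infty}$ bounds on the coefficients, then multiply by $h_T$ and sum. The only cosmetic remark is that once you take $p=0$ and drop the projection you no longer need the oscillation sublinearity (\ref{Osc_Sum_inequality}); the pointwise triangle inequality applied to $L(v_h)$ on each element already does the job.
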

The convergence of \textbf{Adaptive Algorithm $C_0$} has been proved by
Cascon et al \cite{CasconKreuzerNochettoSiebert} and can be stated as follows.
\begin{theorem}(\cite{CasconKreuzerNochettoSiebert})\label{Adaptive_Convergence_Source_Theorem}
 Let $\{u_{h_k}\}_{k\in \mathbb N_0}$ be a sequence finite element solutions of
 (\ref{sourceproblem}) based on the sequence of nested meshes $\{\mathcal{T}_{h_k}\}_{k\in\mathbb N_0}$ produced by
 {\bf Adaptive Algorithm $C_0$}.
Then, there exist constants $\tilde{\gamma}$ and $\xi\in (0,1)$, depending on the shape
regularity of meshes, the data and the parameters used in {\bf Adaptive Algorithm $C_0$}, such that
 any two consecutive iterates $k$ and $k+1$ have the property
\begin{eqnarray}\label{Adaptive_Convergence_Source}
 \|u-u_{h_{k+1}}\|^2_{a,\Omega}+\tilde{\gamma}\tilde{\eta}^2_{h_{k+1}}(u_{h_{k+1}},\Omega)\leq \xi^2
 \Big( \|u-u_{h_{k}}\|^2_{a,\Omega}+\tilde{\gamma}\tilde{\eta}^2_{h_{k}}(u_{h_{k}},\Omega)\Big),
\end{eqnarray}
where $\mathbb N_0=\{0, 1, 2, 3, \cdots\}$ and the constant $\tilde{\gamma}$ has the following form
\begin{eqnarray}\label{Def_tilde_Gamma_source}
\tilde{\gamma}=\frac{1}{(1+\delta^{-1})C^2_L},
\end{eqnarray}
with some constant $\delta\in (0, 1)$.

\end{theorem}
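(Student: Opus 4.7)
The plan is to follow the Cascon--Kreuzer--Nochetto--Siebert strategy in its entirety, since the theorem statement is quoted verbatim from them. Two ingredients drive everything: a Pythagoras-type orthogonality in the energy norm, and an \emph{estimator reduction} inequality that quantifies how $\tilde\eta^2_{h_{k+1}}$ decreases relative to $\tilde\eta^2_{h_k}$ when a Dörfler-marked set is refined. With these in hand, one combines them in a weighted sum with the free parameter $\tilde\gamma$ and tunes a Young's-inequality parameter $\delta$ to obtain a contraction.

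First I would record the Pythagoras identity: since the meshes are nested, $V_{h_k}\subset V_{h_{k+1}}$, and Galerkin orthogonality applied to $a(\cdot,\cdot)$ gives
\begin{equation*}
\|u-u_{h_{k+1}}\|_{a,\Omega}^2 \;=\; \|u-u_{h_k}\|_{a,\Omega}^2 \;-\; \|u_{h_{k+1}}-u_{h_k}\|_{a,\Omega}^2.
\end{equation*}
This replaces the $\|u-u_{h_{k+1}}\|_{a,\Omega}^2$ term by something that only differs from $\|u-u_{h_k}\|_{a,\Omega}^2$ by a controllable discrete quantity.

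The technical heart is the estimator reduction lemma: for every $\delta\in(0,1)$,
\begin{equation*}
\tilde\eta_{h_{k+1}}^2(u_{h_{k+1}},\Omega)\;\le\;(1+\delta)\Bigl\{\tilde\eta_{h_k}^2(u_{h_k},\Omega)-\lambda\!\!\sum_{T\in\widehat{\mathcal{T}}_{h_k}}\!\!\tilde\eta_{h_k}^2(u_{h_k},T)\Bigr\}+(1+\delta^{-1})C_L^2\,\|u_{h_{k+1}}-u_{h_k}\|_{a,\Omega}^2,
\end{equation*}
with $\lambda\in(0,1)$ coming from the fact that bisection at least halves the diameter of every refined element (so the $h_T^2$ and $h_E$ weights drop by a fixed factor on those elements). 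The derivation splits $\tilde\eta_{h_{k+1}}^2(u_{h_{k+1}},\Omega)$ into its residual and jump contributions, adds and subtracts $u_{h_k}$ inside $\tilde{\mathcal R}$ and $\tilde{\mathcal J}$, applies Young's inequality with parameter $\delta$, uses Lemma~2.3 (the bound on $\mathrm{osc}(L(v_h),\mathcal T_h)$) to dominate the $L$-perturbation by $C_L\|u_{h_{k+1}}-u_{h_k}\|_{a,\Omega}$, and on the unrefined elements the indicator values are identical. This step is where essentially all the work lives, and will be the main obstacle---especially keeping the bookkeeping of interior edges that lie between a refined and an unrefined element straight.

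Finally I would form the weighted sum with $\tilde\gamma$ to be chosen. Substituting the Pythagoras identity and the estimator reduction into $\|u-u_{h_{k+1}}\|_{a,\Omega}^2+\tilde\gamma\tilde\eta_{h_{k+1}}^2(u_{h_{k+1}},\Omega)$ produces the combination
\begin{equation*}
\bigl(1-\tilde\gamma(1+\delta^{-1})C_L^2\bigr)\|u_{h_{k+1}}-u_{h_k}\|_{a,\Omega}^2,
\end{equation*}
and choosing $\tilde\gamma=[(1+\delta^{-1})C_L^2]^{-1}$ as in (2.23) kills that coefficient, so that discrepancy term drops out. What remains is
\begin{equation*}
\|u-u_{h_k}\|_{a,\Omega}^2+\tilde\gamma(1+\delta)\bigl\{\tilde\eta_{h_k}^2(u_{h_k},\Omega)-\lambda\!\!\sum_{T\in\widehat{\mathcal T}_{h_k}}\!\!\tilde\eta_{h_k}^2(u_{h_k},T)\bigr\}.
\end{equation*}
Applying Marking Strategy $E_0$ to replace the marked sum by $\theta\tilde\eta_{h_k}^2(u_{h_k},\Omega)$, and using the reliability bound (2.17), $\|u-u_{h_k}\|_{a,\Omega}^2\le \tilde C_1^2\tilde\eta_{h_k}^2(u_{h_k},\Omega)$, to trade a small fraction of $\|u-u_{h_k}\|_{a,\Omega}^2$ for additional estimator mass, gives the required contraction with
\begin{equation*}
\xi^2=\max\Bigl\{1-\beta,\;(1+\delta)(1-\lambda\theta)+\tfrac{\beta}{\tilde\gamma\tilde C_1^2}\Bigr\}<1
\end{equation*}
for a small auxiliary $\beta>0$ and $\delta$ chosen small enough. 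This yields (2.22) and fixes the form of $\tilde\gamma$ announced in (2.23).
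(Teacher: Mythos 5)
The paper does not prove this theorem; it is stated with an explicit citation to Cascon--Kreuzer--Nochetto--Siebert (the text immediately preceding it reads ``The convergence of \textbf{Adaptive Algorithm $C_0$} has been proved by Cascon et al.''), so there is no in-paper argument to compare yours against. Your sketch does correctly reconstruct the CKNS strategy --- Galerkin/Pythagoras orthogonality on nested spaces, the estimator-reduction inequality with the $(1+\delta)$/$(1+\delta^{-1})$ split, the choice of $\tilde\gamma$ to annihilate the $\|u_{h_{k+1}}-u_{h_k}\|_{a,\Omega}^2$ discrepancy, D\"orfler marking, and a final trade via the reliability bound --- which is exactly the reference the theorem points to.

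Two small remarks. In your closing formula the exponent on $\tilde C_1$ is inverted: trading $\beta\|u-u_{h_k}\|_{a,\Omega}^2$ through $\|u-u_{h_k}\|_{a,\Omega}^2\le\tilde C_1^2\,\tilde\eta_{h_k}^2(u_{h_k},\Omega)$ adds $\beta\tilde C_1^2$ to the estimator coefficient, so after dividing by $\tilde\gamma$ the second entry of the maximum should read $(1+\delta)(1-\lambda\theta)+\beta\tilde C_1^2/\tilde\gamma$, not $(1+\delta)(1-\lambda\theta)+\beta/(\tilde\gamma\tilde C_1^2)$. Also, the constant that enters the estimator-reduction lemma is the Lipschitz constant of the estimator in its argument (the paper's $C_R$ from Lemma~\ref{Eta_Sum_Bounded_Lemma}), whereas $C_L$ in Lemma~\ref{Osc_Bounded_Lemma} bounds the \emph{oscillation}; the paper's formula~(\ref{Def_tilde_Gamma_source}) writes $C_L$ there, and you have reproduced that labelling, but one should be aware these are a priori two different constants that happen to be conflated in the paper's notation.
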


\section{The  eigenvalue problem and adaptive finite element method based on multilevel correction}
In this section, we introduce a type of AFEM based on multilevel correction scheme for the linear
second order elliptic eigenvalue problem.

We are concerned with the following  eigenvalue problem
\begin{eqnarray}\label{eigenproblem}
\left\{
\begin{array}{rcl}
-\nabla (A\cdot \nabla u)+\varphi u&=&\lambda u\ \ \ {\rm in}\ \Omega,\\
u&=&0\ \ \ \ \ {\rm on}\ \partial\Omega,\\
\int_{\Omega} u^2d\Omega&=&1.
 \end{array} \right.
\end{eqnarray}

The corresponding weak form can be written as:
Find $(\lambda, u)\in \mathcal{R}\times H_0^1(\Omega)$ such that $\|u\|_{0,\Omega}=1$ and
\begin{eqnarray} \label{eigenvalue problem}
a(u,v)=\lambda(u,v)\ \ \ \  \ \forall v\in H^1_0(\Omega).
\end{eqnarray}
As we know the eigenvalue problem (\ref{eigenvalue problem}) has a countable sequence of real eigenvalues
$$0<\lambda_1<\lambda_2\leq\lambda_3\leq\cdots$$
and corresponding orthogonal eigenfunctions
$$u_1,u_2,u_3,\cdots, $$
which satisfy  $(u_i,u_j)=\delta_{ij}, i,j=1,2,\cdots$.

Now we state an useful Rayleigh quotient expansion of the eigenvalue which is expressed by the
 eigenfunction approximation (see \cite{BabuskaOsborn_1989,LinXie,XuZhou_2001}).
\begin{lemma}\label{Expansion_Eigenvalue_Lemma}
Let $(\lambda,u)$ be an eigenpair of (\ref{eigenvalue problem}). Then for any
$w \in H_0^1(\Omega)$, we have
\begin{eqnarray}\label{Expansion_Eigenvalue}
 \frac{a(w,w)}{(w,w)}-\lambda =\frac{a(w-u,w-u)}{(w,w)}-\frac{\lambda(w-u,w-u)}{(w,w)}.
\end{eqnarray}
\end{lemma}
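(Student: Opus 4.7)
The plan is to verify the identity by pure algebraic expansion, using only the eigenvalue equation $a(u,v)=\lambda(u,v)$ for all $v\in H_0^1(\Omega)$ and the normalization $(u,u)=1$. Since the identity is homogeneous of degree zero in $(w,w)$ (both sides are divided by $(w,w)$), it is enough to clear denominators and show that
\[
a(w,w) - \lambda (w,w) \;=\; a(w-u,w-u) - \lambda(w-u,w-u).
\]

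First I would expand the right-hand side using bilinearity of $a(\cdot,\cdot)$ and of $(\cdot,\cdot)$:
\[
a(w-u,w-u) = a(w,w) - 2a(w,u) + a(u,u),
\]
\[
(w-u,w-u) = (w,w) - 2(w,u) + (u,u).
\]
Next I would substitute the eigenvalue relation. Taking $v=u$ in (\ref{eigenvalue problem}) gives $a(u,u)=\lambda(u,u)=\lambda$; taking $v=w$ together with symmetry of $a$ gives $a(w,u)=a(u,w)=\lambda(u,w)=\lambda(w,u)$. Plugging these in, the terms $-2a(w,u)$ and $+2\lambda(w,u)$ cancel, and the terms $a(u,u)=\lambda$ and $-\lambda(u,u)=-\lambda$ cancel as well, leaving exactly $a(w,w)-\lambda(w,w)$. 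Dividing through by $(w,w)$ yields the stated identity.

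I do not expect any real obstacle here: everything reduces to two line-expansions plus one application of the eigenvalue equation in its symmetric form. The only mild point worth flagging is the implicit assumption $(w,w)\neq 0$, which is harmless since the identity is trivially vacuous otherwise; no regularity beyond $w\in H_0^1(\Omega)$ is required because $a(w,w)$ and $(w,w)$ are both well defined on this space.
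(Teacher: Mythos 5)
Your proof is correct and is exactly the standard algebraic verification that the cited references use; the paper itself states the lemma without proof, so there is nothing to compare against beyond noting that you supply the obvious expansion. One small observation: the normalization $(u,u)=1$ is not actually needed for the cancellation, since $a(u,u)-\lambda(u,u)=0$ and $-2a(w,u)+2\lambda(w,u)=0$ hold directly from $a(u,v)=\lambda(u,v)$ and symmetry of $a(\cdot,\cdot)$, regardless of the scaling of $u$.
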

The standard finite element discretization for (\ref{eigenvalue problem}) is:
Find $(\lambda_h,u_h)\in \mathcal{R}\times V_h$ such that $\|u_h\|_0=1$ and
\begin{eqnarray}\label{diseig}
 a(u_h,v_h)=\lambda_h(u_h,v_h)\ \ \ \forall v_h\in V_h.
\end{eqnarray}
We can also order the eigenvalues of (\ref{diseig}) as an  increasing sequence
$$0<\lambda_{1,h} < \lambda_{2,h}\leq\lambda_{3,h}\leq\cdots\leq \lambda_{n_h,h},\ \ \ n_h={\rm dim}V_h,$$
and the corresponding orthogonal eigenfunctions
$$u_{1,h},\ u_{2,h}, \ u_{3,h},\ \cdots,\ u_{n_h,h} $$
satisfying $(u_{i,h},u_{j,h})=\delta_{ij}$, $i, j=1, 2, \cdots, n_h$.

From the minimum-maximum principle (see \cite{BabuskaOsborn_1989,Chatelin}) and Lemma \ref{Expansion_Eigenvalue_Lemma}, we have
\begin{eqnarray}
 \lambda_i\leq\lambda_{i,h}\leq\lambda_i+\bar{C}_i\|u_i-u_{i,h}\|_{a,\Omega}^2,\ \ i=1,2,,\cdots,n_h
\end{eqnarray}
with $\bar{C}_i$ constants independent of mesh size $h$.

\subsection{Adaptive multilevel correction algorithm for eigenvalue problem}
The adaptive procedure consists of loops of the form

\begin{center}
\textbf{Solve $\rightarrow$ Estimate $\rightarrow$ Mark $\rightarrow$ Refine.}
\end{center}

Similarly to {\bf Marking Strategy $E_0$}, we define {\bf Marking Strategy $E$} for (\ref{diseig}) to
enforce the error reduction as follows:
\begin{framed}
\begin{center}
{\bf Marking Strategy $E$}
\end{center}

\noindent Given a parameter $0<\theta<1$:\\
\noindent
1. Construct a minimal subset $\widehat{\mathcal{T}}_H$ from $\mathcal{T}_H$ by
selecting some elements in $\mathcal{T}_H$ such that
\begin{eqnarray*}
 \sum\limits_{T\in \widehat{\mathcal{T}}_H}\eta^2_H(u_h, T)\geq\theta\eta^2_H(u_h,\Omega),
\end{eqnarray*}
where $\eta_h(u_h,T)$ and $\eta_h(u_h,\Omega)$ denote the error indicator of the eigenfunction
approximation  $u_h$ on $T$ and $\Omega$, respectively.

\noindent
2. Mark all the elements in $\widehat{\mathcal{T}}_H$.
\end{framed}

Then we present a type of AFEM to compute the eigenvalue problem in the multilevel
correction framework.
\begin{framed}
\begin{center}
\textbf{Adaptive Algorithm $C$ }
\end{center}

\noindent 1. Pick up an initial mesh $\mathcal{T}_{h_{0}}$ with mesh size $h_0$.\\
2. Construct the finite element space $V_{h_0}$ and solve the following
eigenvalue problem to get the discrete solution $(\lambda_{h_{0}},u_{h_0})\in \mathcal{R}\times V_{h_0}$ such that $\|u_{h_0}\|_{0,\Omega}=1$ and
\begin{eqnarray}\label{Eigen_Dis_h_0}
a(u_{h_0}, v_{h_0})&=&\lambda_{h_0}(u_{h_0}, v_{h_0})\ \ \ \ \ \forall v_{h_0}\in V_{h_0}.
\end{eqnarray}
3. Let $k=0$.\\
4. Compute the local error indicators  $\eta_{h_{k}}(u_{h_k}, T)$.\\
5. Construct $\widehat{\mathcal{T}}_{h_{k}}\subset \mathcal{T}_{h_{k}}$ by {\bf Marking Strategy E} and parameter $\theta$.\\
6. Refine $\mathcal{T}_{h_{k}}$ to get a new conforming mesh $\mathcal{T}_{h_{k+1}}$ by procedure \textbf{Refine}.\\
7. Solve the following source problem on $\mathcal{T}_{h_{k+1}}$ for the discrete solution $\widetilde{u}_{h_{k+1}}\in V_{h_{k+1}}$:
\begin{eqnarray}\label{BVP_Dis_h_k+1}
a(\widetilde{u}_{h_{k+1}},v_{h_{k+1}})&=&\lambda_{h_k}(u_{h_k},v_{h_{k+1}})\ \ \ \ \ \forall v_{h_k}\in V_{h_k}.
\end{eqnarray}
8. Construct the new finite element space $V_{h_0,h_{k+1}}=V_{h_0}+{\rm span} \{\widetilde{u}_{h_{k+1}}\}$ and solve the eigenvalue problem to get the solution $(\lambda_{h_{k+1}}, u_{h_{k+1}})\in
\mathcal{R}\times V_{h_0,h_{k+1}}$ such that $\|u_{h_{k+1}}\|_{0,\Omega}=1$ and
\begin{eqnarray}\label{Eigen_Dis_h_k+1}
a(u_{h_{k+1}}, v_{h_{h_0,h_{k+1}}})=\lambda_{h_{k+1}}(u_{h_{k+1}}, v_{h_0,h_{k+1}})\ \ \  \ \forall v_{h_0,h_{k+1}}\in V_{h_0,h_{k+1}}.
\end{eqnarray}
9. Let $k=k+1$ and go to Step 4.
\end{framed}
Local error indicator $\eta_h(u_{h_k},T)$ in {\bf Adaptive Algorithm $C$} will be given in the next subsection.
For the aim of error estimate, we define
\begin{eqnarray*}
M(\lambda_i)&=&\big\{w\in H_0^1(\Omega):w \text{ is an eigenfunction of (\ref{eigenproblem}) }\\
&& \quad\quad\quad \ \ \ \ \ \ \ \ \ \text{ corresponding to the eigenvalue }\lambda_i\big\}
 \end{eqnarray*}
and the quantity
$$\delta_h(\lambda_i)=\sup\limits_{w\in M(\lambda_i),\|w\|_{0,\Omega}=1}\inf\limits_{v_h\in V_h}\|w-v_h\|_{a,\Omega}.$$

In the following analysis, we only need some crude priori error estimates stated as follows.
\begin{lemma}\label{Error_Estimate_Crude_Lemma}
The obtained eigenpair approximation $(\lambda_{h_k},u_{h_k})\ (k=0, 1,\cdots)$ after each adaptive step in
 {\bf Adaptive Algorithm $C$} has the error estimate
\begin{eqnarray}
\|u-u_{h_k}\|_{a,\Omega}&\lesssim & \delta_{h_0}(\lambda),\label{Error_u_u_h_k}\\
\|u-u_{h_k}\|_{0,\Omega}&\lesssim& \eta_a(h_0)\|u-u_{h_k}\|_{a,\Omega},\label{Error_u_u_h_k_-1}\\
|\lambda-\lambda_{h_k}| &\lesssim& \|u-u_{h_k}\|_{a,\Omega}^2. \label{Error_lambda_h_k}
\end{eqnarray}
\end{lemma}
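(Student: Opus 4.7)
The plan is to proceed by induction on $k$. For the base case $k=0$, the three estimates are standard a priori bounds for the Galerkin approximation of an eigenvalue problem on the single space $V_{h_0}$ (see \cite{BabuskaOsborn_1989,Chatelin}): the energy-norm bound $\|u-u_{h_0}\|_{a,\Omega}\lesssim\delta_{h_0}(\lambda)$ is the classical quasi-best-approximation result, the $L^2$ bound follows from an Aubin--Nitsche duality argument that introduces the factor $\eta_a(h_0)$ in analogy with (\ref{proposition2}), and the quadratic bound on $|\lambda-\lambda_{h_0}|$ is Lemma \ref{Expansion_Eigenvalue_Lemma} applied with $w=u_{h_0}$. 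All three use only that $h_0$ is sufficiently small.

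For the inductive step, assume (\ref{Error_u_u_h_k})--(\ref{Error_lambda_h_k}) at level $k$. First I control the intermediate source-problem solution $\widetilde{u}_{h_{k+1}}$. Introduce $\widehat{u}\in H^1_0(\Omega)$ by $a(\widehat{u},v)=\lambda_{h_k}(u_{h_k},v)$ for all $v\in H^1_0(\Omega)$, so that $\widetilde{u}_{h_{k+1}}=R_{h_{k+1}}\widehat{u}$. Subtracting from the continuous eigenvalue identity, testing with $\widehat{u}-u$, and using the Poincar\'e inequality gives $\|\widehat{u}-u\|_{a,\Omega}\lesssim\|\lambda_{h_k}u_{h_k}-\lambda u\|_{0,\Omega}\leq\lambda_{h_k}\|u-u_{h_k}\|_{0,\Omega}+|\lambda-\lambda_{h_k}|$, which by the inductive hypothesis is $\lesssim\eta_a(h_0)\delta_{h_0}(\lambda)+\delta_{h_0}(\lambda)^2$. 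Decomposing $u-\widetilde{u}_{h_{k+1}}=(u-R_{h_{k+1}}u)+R_{h_{k+1}}(u-\widehat{u})$ and using the nestedness $V_{h_0}\subset V_{h_{k+1}}$ together with the $a$-stability (\ref{Proj_Bounded}) yields $\|u-\widetilde{u}_{h_{k+1}}\|_{a,\Omega}\lesssim\delta_{h_0}(\lambda)$ once $h_0$ is small enough.

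Second, I apply the standard a priori theory for Galerkin eigenvalue approximation to the small conforming subspace $V_{h_0,h_{k+1}}=V_{h_0}+{\rm span}\{\widetilde{u}_{h_{k+1}}\}$ on which $(\lambda_{h_{k+1}},u_{h_{k+1}})$ is solved. Because $\widetilde{u}_{h_{k+1}}$ itself lies in this space, the best-approximation quantity $\delta_{V_{h_0,h_{k+1}}}(\lambda)$ is bounded by $\|u-\widetilde{u}_{h_{k+1}}\|_{a,\Omega}\lesssim\delta_{h_0}(\lambda)$; and because $V_{h_0}\subset V_{h_0,h_{k+1}}$, the source-problem quantity $\eta_a$ on the enlarged space is no larger than $\eta_a(h_0)$. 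The Babu\v{s}ka--Osborn machinery then delivers (\ref{Error_u_u_h_k}) and (\ref{Error_u_u_h_k_-1}) at level $k+1$, while Lemma \ref{Expansion_Eigenvalue_Lemma} applied with $w=u_{h_{k+1}}$ delivers (\ref{Error_lambda_h_k}), closing the induction.

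The principal delicacy is arranging the induction to close with constants independent of $k$. The hidden constants in the three bounds must be large enough that the two-step correction does not inflate them from one iterate to the next; this is possible only because both $\eta_a(h_0)$ and $\delta_{h_0}(\lambda)$ enter the corrections as small multiplicative factors, so the perturbative contribution of each outer loop is absorbed into a fixed constant. The assumption that $h_0$ be small enough, already standing throughout the paper, is used here once and for all to guarantee this absorption; no improvement in the fine-mesh resolution is exploited, which is precisely why these are labelled crude estimates.
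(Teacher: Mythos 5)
Your proof is correct but considerably more elaborate than the paper's, and the extra structure turns out to be unnecessary. The paper's argument is a one-step application of the Babu\v{s}ka--Osborn theory to the enriched space $V_{h_0,h_k}$: since by construction $V_{h_0}\subset V_{h_0,h_k}$ for every $k$, the best-approximation quantity satisfies $\delta_{V_{h_0,h_k}}(\lambda)\leq\delta_{h_0}(\lambda)$ and the dual quantity $\widetilde{\eta}_a(h_0)\leq\eta_a(h_0)$ immediately, which yields (\ref{Error_u_u_h_k}) and (\ref{Error_u_u_h_k_-1}) uniformly in $k$ without any induction; (\ref{Error_lambda_h_k}) then follows from Lemma~\ref{Expansion_Eigenvalue_Lemma}. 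You instead run an induction on $k$, controlling the intermediate solution $\widetilde{u}_{h_{k+1}}$ via a triangle inequality and $a$-stability of $R_{h_{k+1}}$, and then must close the induction with a bootstrapping argument to keep the hidden constants from inflating step by step. Interestingly, you do invoke the nestedness $V_{h_0}\subset V_{h_0,h_{k+1}}$ to control the $\eta_a$ quantity on the enlarged space, but you bound the best-approximation error $\delta_{V_{h_0,h_{k+1}}}(\lambda)$ through $\|u-\widetilde{u}_{h_{k+1}}\|_{a,\Omega}$ rather than through the same nestedness; had you used nestedness for $\delta$ as well, the entire induction, the estimate on $\widehat{u}$, and the constant-tracking paragraph would evaporate, collapsing your proof into the paper's. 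The paper's route is thus both shorter and cleaner; yours is logically sound but over-engineered, and the delicacy you flag in your last paragraph is an artifact of that choice rather than an intrinsic feature of the lemma.
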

\begin{proof}
Based on the error estimate theory of eigenvalue problem by finite
element method (c.f. \cite{BabuskaOsborn_1989,BabuskaOsborn_1991}),
the eigenfunction approximation of problem (\ref{Eigen_Dis_h_0}) or (\ref{Eigen_Dis_h_k+1})  has the following estimates
\begin{eqnarray}\label{Error_u_u_h_2}
\|u-u_{h_k}\|_{a,\Omega}&\lesssim& \sup_{w\in M(\lambda)}\inf_{v_h\in
V_{h_0,h_k}}\|w-v_h\|_{a,\Omega} \nonumber\\
&\lesssim& \sup_{w\in M(\lambda)}\inf_{v_h\in
V_{h_0}}\|w-v_h\|_{a,\Omega}\lesssim \delta_{h_0}(\lambda).
\end{eqnarray}
and
\begin{eqnarray}\label{Error_u_u_h_2_Negative}
\|u-u_{h_k}\|_{0,\Omega}&\lesssim&\widetilde{\eta}_a(h_0)\|u-u_{h_k}\|_{a,\Omega},
\end{eqnarray}
where
\begin{eqnarray}\label{Eta_a_h_2}
\widetilde{\eta}_a(h_0)&=&\sup_{f\in V,\|f\|_{0,\Omega}=1}\inf_{v\in
V_{h_0,h_k}}\|L^{-1}f-v\|_{a,\Omega} \leq \eta_a(h_0).
\end{eqnarray}
From (\ref{Error_u_u_h_2}), (\ref{Error_u_u_h_2_Negative}), and (\ref{Eta_a_h_2}), we can obtain
(\ref{Error_u_u_h_k}) and (\ref{Error_u_u_h_k_-1}). The
estimate (\ref{Error_lambda_h_k}) can be derived by Lemma \ref{Expansion_Eigenvalue_Lemma} and (\ref{Error_u_u_h_k_-1}).
\end{proof}

\subsection{A posteriori error estimate for eigenvalue problem}
Now, we are going to give an a posteriori error estimator for the eigenvalue problem.
The a posteriori error estimators
have been studied extensively (see, e.g., \cite{BeckerRannacher,DaiXuZhou,DuranPadraRodrguez,HeZhou,HeuvelineRannacher,Larson,MaoShenZhou}).
Here we use the similar way in \cite{DaiXuZhou} to derive the a posteriori error estimator for
 the eigenvalue problem by {\bf Adaptive Algorithm $C$}  
 from a relationship between the elliptic eigenvalue approximation and
 the associated boundary value approximation.


In this paper, we set $H=h_k$ and $h=h_{k+1}$.
Let $K:L^2(\Omega)\rightarrow H_0^1(\Omega)$ be the operator defined by
\begin{eqnarray}\label{postK}
a(Kw,v)=(w,v)\ \ \ \ \ \ \forall w, v\in H_0^1(\Omega).
\end{eqnarray}
Then the eigenvalue problems (\ref{eigenvalue problem}) and (\ref{diseig}) can be written as
\begin{eqnarray}
 u=\lambda Ku,\ \ \ \ u_h=\lambda_hR_hKu_h.
\end{eqnarray}
In the step $7$ of {\bf Adaptive Algorithm $C$}, we can view (\ref{BVP_Dis_h_k+1})
 as the finite element approximation of the problem
\begin{eqnarray}
 a(w^{H},v)&=&(\lambda_{H} u_{H},v)\ \ \ \  \forall v\in H_0^1(\Omega).
\end{eqnarray}
Thus we have $w^{H}=\lambda_{H}K u_{H}$ and
\begin{eqnarray}\label{postGar}
 \widetilde{u}_h=R_{h} w^{H}.
\end{eqnarray}
Similarly we can also define $w^h$ as
\begin{eqnarray}\label{w_h_k_defintion}
a(w^h,v)&=&(\lambda_hu_h,v)\ \ \ \ \ \forall v\in H_0^1(\Omega).
\end{eqnarray}

 Let $r(h_0):=\eta_a(h_0)+\|u-u_H\|_{a,\Omega}$.
 Obviously from Lemma \ref{Error_Estimate_Crude_Lemma}, we know $r(h_0)\ll 1$ if $h_0$ is small enough.
\begin{theorem}\label{trans}
We have the following  estimate
\begin{eqnarray}\label{Estimate_u_u_h}
\|u-u_h\|_{a,\Omega}=\|w^h-R_hw^{h}\|_{a,\Omega}+\mathcal{O}(r(h_0))(\|u-u_H\|_{a,\Omega}+\|u-u_h\|_{a,\Omega}).
\end{eqnarray}
\end{theorem}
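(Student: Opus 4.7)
The plan is to exploit a Galerkin-projection identity hidden in Step 8 of the algorithm. By the construction of $u_h$ in (\ref{Eigen_Dis_h_k+1}) and the definition (\ref{w_h_k_defintion}) of $w^h$, we have $a(u_h,v) = \lambda_h(u_h,v) = a(w^h,v)$ for every $v\in V_{h_0,h}$. Thus $u_h$ is the $a$-Galerkin projection of $w^h$ onto $V_{h_0,h}$, while $R_h w^h$ is the $a$-Galerkin projection of $w^h$ onto the larger space $V_h$. The Pythagorean identity in the $a$-inner product therefore gives
\[\|w^h - u_h\|_{a,\Omega}^2 = \|w^h - R_h w^h\|_{a,\Omega}^2 + \|R_h w^h - u_h\|_{a,\Omega}^2,\]
so in particular $\|w^h - R_h w^h\|_{a,\Omega}\le \|w^h - u_h\|_{a,\Omega}$. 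Moreover, the readily available trial element $\widetilde{u}_h = R_h w^H \in V_{h_0,h}$ and optimality of $u_h$ inside $V_{h_0,h}$, together with (\ref{Proj_Bounded}), yield $\|R_h w^h - u_h\|_{a,\Omega} \le \|R_h w^h - R_h w^H\|_{a,\Omega} \le \|w^h - w^H\|_{a,\Omega}$.

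Next I would estimate the two auxiliary quantities $\|u - w^h\|_{a,\Omega}$ and $\|w^h - w^H\|_{a,\Omega}$ by a duality-flavoured trick: test the residual equation against the difference itself. For the first, subtracting (\ref{w_h_k_defintion}) from $a(u,v) = \lambda(u,v)$ and choosing $v = u - w^h$ yields
\[\|u - w^h\|_{a,\Omega}^2 = (\lambda u - \lambda_h u_h,\, u - w^h);\]
Cauchy--Schwarz, Poincar\'e, the splitting $\lambda u - \lambda_h u_h = (\lambda-\lambda_h)u + \lambda_h(u-u_h)$, and Lemma \ref{Error_Estimate_Crude_Lemma} then bound the right-hand side by $\mathcal{O}(r(h_0))\,\|u-u_h\|_{a,\Omega}$. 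An analogous computation for $w^h - w^H$, starting from $a(w^h-w^H,v) = (\lambda_h u_h - \lambda_H u_H,v)$ and using the triangle inequality $\|u_h - u_H\|_{0,\Omega}\le \|u-u_h\|_{0,\Omega}+\|u-u_H\|_{0,\Omega}$ together with (\ref{Error_u_u_h_k_-1}) and (\ref{Error_lambda_h_k}), produces
\[\|w^h - w^H\|_{a,\Omega} \lesssim r(h_0)\bigl(\|u - u_h\|_{a,\Omega} + \|u - u_H\|_{a,\Omega}\bigr).\]

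Assembly is then immediate. The triangle inequality $\|u - u_h\|_{a,\Omega}\le \|u - w^h\|_{a,\Omega} + \|w^h - u_h\|_{a,\Omega}$, combined with the Pythagorean bound and the two auxiliary estimates, delivers the upper half of (\ref{Estimate_u_u_h}). The matching lower half follows from $\|w^h-R_h w^h\|_{a,\Omega}\le \|w^h - u_h\|_{a,\Omega}\le \|u-w^h\|_{a,\Omega}+\|u-u_h\|_{a,\Omega}$ together with the bound on $\|u-w^h\|_{a,\Omega}$. Packaging the two directions yields the stated equality.

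The main obstacle I anticipate is extracting exactly the prefactor $r(h_0)$ out of $\|w^h - w^H\|_{a,\Omega}$: the piece $\|u_h - u_H\|_{0,\Omega}$ has to be routed through the $L^2$ estimate (\ref{Error_u_u_h_k_-1}) to collect an $\eta_a(h_0)$, while $|\lambda_h - \lambda_H|$ only appears at the quadratic level via (\ref{Error_lambda_h_k}); one must verify that these two mechanisms combine multiplicatively into the single factor $r(h_0) = \eta_a(h_0)+\|u-u_H\|_{a,\Omega}$ multiplying $\|u-u_h\|_{a,\Omega}+\|u-u_H\|_{a,\Omega}$, rather than inflating it.
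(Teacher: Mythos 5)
Your proposal is correct, and the central observation is slicker than the paper's: since $a(u_h,v)=\lambda_h(u_h,v)=a(w^h,v)$ for every $v\in V_{h_0,h}$, the iterate $u_h$ is the $a$-Galerkin projection of $w^h$ onto the low-dimensional space $V_{h_0,h}\subset V_h$, so the Pythagorean identity $\|w^h-u_h\|^2_{a,\Omega}=\|w^h-R_hw^h\|^2_{a,\Omega}+\|R_hw^h-u_h\|^2_{a,\Omega}$ holds and the optimality bound $\|R_hw^h-u_h\|_{a,\Omega}\le\|R_hw^h-\widetilde{u}_h\|_{a,\Omega}\le\|w^h-w^H\|_{a,\Omega}$ falls out directly. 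The paper instead expands $u-u_h=(u-w^h)+(w^h-R_hw^h)+R_h(w^h-w^H)+(\widetilde{u}_h-u_h)$ and treats the fourth term by a separate testing-against-itself argument $a(u_h-\widetilde{u}_h,u_h-\widetilde{u}_h)=(\lambda_hu_h-\lambda_Hu_H,u_h-\widetilde{u}_h)$ combined with coercivity; your projection structure renders that extra estimate unnecessary. Your duality-testing derivations of $\|u-w^h\|_{a,\Omega}$ and $\|w^h-w^H\|_{a,\Omega}$ are equivalent in content to the paper's direct use of $u-w^h=\lambda K(u-u_h)+(\lambda-\lambda_h)Ku_h$ with the smoothing bound $\|Kv\|_{a,\Omega}\lesssim\|v\|_{0,\Omega}$. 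What your route buys is one structural fact replacing two separate stability estimates; what the paper's route buys is not having to notice the nested Galerkin-projection relationship, reading instead as a linear succession of triangle-inequality bounds. The concern you flag in your final paragraph about assembling the single prefactor $r(h_0)$ is a legitimate point of care, but it affects both arguments identically — both route $\|u_h-u_H\|_{0,\Omega}$ through (\ref{Error_u_u_h_k_-1}) and $|\lambda_h-\lambda_H|$ through (\ref{Error_lambda_h_k}), and both rely on Lemma \ref{Error_Estimate_Crude_Lemma} to keep every iterate at the coarse-mesh accuracy so the quadratic terms can be absorbed — so it does not represent a gap specific to your proposal.
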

\begin{proof}
 From the definition (\ref{Eigen_Dis_h_k+1}), we have
\begin{eqnarray}\label{u_u_h}
u-u_h&=&u-w^h+w^h-R_hw^h+R_hw^h-R_hw^H\nonumber\\
&&+R_hw^H-u_h\nonumber\\
&=&u-w^h+w^h-R_hw^h+R_h(w^h-w^H)\nonumber\\
&&+\widetilde{u}_h-u_h.
\end{eqnarray}
The following equality holds
\begin{eqnarray}
u-w^h &=&\lambda Ku-\lambda_hKu_h =\lambda K(u-u_h)+(\lambda-\lambda_h)Ku_h,
\end{eqnarray}
which together with the fact $\|K(u-u_h)\|_{a,\Omega}\lesssim \|u-u_h\|_{0,\Omega}$, (\ref{Error_u_u_h_k_-1}), and
(\ref{Error_lambda_h_k}) leads to
\begin{eqnarray}\label{u_w_h}
\|u-w^h\|_{a,\Omega}&\leq&\tilde{C}r(h_0)\|u-u_h\|_{a,\Omega}.
\end{eqnarray}
Similarly, we have
\begin{eqnarray}\label{w_h_H}
\|R_hw^h-R_hw^H\|_{a,\Omega} &\leq& \|w^h-w^H\|_{a,\Omega} =\|\lambda_hKu_h-\lambda_HKu_H\|_{a,\Omega}\nonumber\\
&\leq& \tilde{C}r(h_0)(\|u-u_h\|_{a,\Omega}+\|u-u_H\|_{a,\Omega}).
\end{eqnarray}
Since $u_h-\widetilde{u}_h \in V_{h_0,h}$, (\ref{Error_u_u_h_k_-1}), and (\ref{Error_lambda_h_k}), the following
inequality holds
\begin{eqnarray}
&&a(u_h-\widetilde{u}_h,u_h-\widetilde{u}_h)=(\lambda_hu_h-\lambda_Hu_H,u_h-\widetilde{u}_h)\nonumber\\
&\leq&|\lambda_h-\lambda_H|(u_h,u_h-\widetilde{u}_h)+\lambda_H(u_h-u_H,u_h-\widetilde{u}_h)\nonumber\\
&\leq&\tilde{C}r(h_0)(\|u-u_h\|_{a,\Omega}+\|u-u_H\|_{a,\Omega})\|u_h-\widetilde{u}_h\|_{a,\Omega}.
\end{eqnarray}
Thus with the coercivity of $a(\cdot,\cdot)$,  we have
\begin{eqnarray}\label{u_tilde_u_h}
\|u_h-\widetilde{u}_h\|_{a,\Omega}&\leq& \tilde{C}r(h_0)(\|u-u_h\|_{a,\Omega}+\|u-u_H\|_{a,\Omega}).
\end{eqnarray}
Finally from (\ref{u_u_h}), (\ref{u_w_h}), (\ref{w_h_H}), and (\ref{u_tilde_u_h}),
 the desired result (\ref{Estimate_u_u_h}) can be obtained and the proof is complete.
\end{proof}

Theorem \ref{trans} builds a basic relationship between $\|u-u_h\|$ and $\|w^h-R_hw^{h}\|$, the former
is the error between the ture and the discrete eigenfunctions, while the latter the error
between $w^h$ and its finite element projection, which has been well analyzed. Since the difference between
 $\|u-u_h\|$ and $\|w^h-R_hw^{h}\|$ is a higher order term, as in \cite{DaiXuZhou},
  we follow the procedure of the analysis of convergence and complexity for the source problem.

We define the element residual $\mathcal{R}_T(u_h)$ and the jump residual $\mathcal{J}_E(u_h)$ as
follow:
\begin{eqnarray}
 \mathcal{R}_T(u_h):=\lambda_h u_h-L(u_h)=\lambda_hu_h+\nabla\cdot(A\nabla u_h)-\varphi u_h \ \ \text{in}\ T\in \mathcal{T}_h,\\
\mathcal{J}_E(u_h):=-A \nabla u_h^+ \cdot \nu^+-A \nabla u_h^- \cdot \nu^-:=[[A\nabla u_h]]_E\cdot\nu_E\ \ \ \text{on}\  E\in \mathcal{E}_h,
\end{eqnarray}
where $E$, $\nu^+$ and $\nu^-$ are defined as those of Sect. \ref{Survey_Elliptic_Problem}.

For each element $T\in \mathcal{T}_h$, we define the local error indicator $\eta_h(u_h,T)$ by
\begin{eqnarray}\label{eta_definition}
 \eta_h^2(u_h,T):=h_T^2\|\mathcal{R}_T(u_h)\|_{0,T}^2+\sum\limits_{E\in \mathcal{E}_h,E\subset \partial T}h_E\|\mathcal{J}_E(u_h)\|^2_{0,E}.
\end{eqnarray}
Then on a subset $\omega\subset \Omega$, we define the error estimator $\eta_h(u_h,\omega)$ by
\begin{eqnarray}
 \eta_h(u_h,\omega):=\left(\sum_{T\in \mathcal{T}_h,T\subset w}\eta_h^2(u_h,T)\right)^{1/2}.
\end{eqnarray}
As same as (\ref{Osc_Sum_inequality}) and (\ref{Osc_Bounded}), we have the similar inequalities of the
indicator $\eta_h(v_h,\omega)$ for any $v_h\in V_h$.
\begin{lemma}\label{Eta_Sum_Bounded_Lemma}
The following inequalities for the indicator $\eta_h(v_h,\omega)$ hold
\begin{eqnarray}
\eta_h(w_h+v_h,\omega) &\leq& \eta_h(w_h,\omega)+\eta_h(v_h,\omega)
\ \ \ \  \forall w_h \in V_h,  v_h\in V_h,\label{eta_sum_inequality}\\
\eta_h(v_h,\Omega) &\leq& C_R\|v_h\|_{a,\Omega}\ \ \ \ \forall v_h\in V_h.\label{eta_H_1_Bounded}
\end{eqnarray}
\end{lemma}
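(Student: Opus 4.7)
The plan is to establish both inequalities by combining the linearity of the residuals $\mathcal{R}_T$ and $\mathcal{J}_E$ in their argument with standard polynomial inverse and inverse-trace estimates on a shape-regular mesh.

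For (\ref{eta_sum_inequality}), observe that, with the eigenvalue approximation $\lambda_h$ held fixed at the current mesh level, both $v_h\mapsto \mathcal{R}_T(v_h)=\lambda_h v_h+\nabla\cdot(A\nabla v_h)-\varphi v_h$ and $v_h\mapsto \mathcal{J}_E(v_h)=[[A\nabla v_h]]_E\cdot\nu_E$ are linear. The $L^2$ triangle inequality on each element and face then yields $\|\mathcal{R}_T(w_h+v_h)\|_{0,T}\leq \|\mathcal{R}_T(w_h)\|_{0,T}+\|\mathcal{R}_T(v_h)\|_{0,T}$ and similarly for $\mathcal{J}_E$. Viewing $\eta_h(v_h,T)$ as the Euclidean norm of the vector of weighted residuals $\bigl(h_T\|\mathcal{R}_T(v_h)\|_{0,T},\ h_E^{1/2}\|\mathcal{J}_E(v_h)\|_{0,E}\bigr)_{E\subset\partial T}$, Minkowski's inequality gives the local bound $\eta_h(w_h+v_h,T)\leq \eta_h(w_h,T)+\eta_h(v_h,T)$; a second Minkowski application over the elements of $\omega$ completes the proof.

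For (\ref{eta_H_1_Bounded}), I bound each contribution in (\ref{eta_definition}) by the energy norm restricted to a patch of $T$. For the volume term, since $v_h|_T$ is a polynomial of degree $m$ and $a_{ij}\in W^{1,\infty}(\Omega)$, I expand $\nabla\cdot(A\nabla v_h)=(\nabla\cdot A)\cdot\nabla v_h+A:\nabla^2 v_h$ and invoke the polynomial inverse estimate $\|\nabla^2 v_h\|_{0,T}\lesssim h_T^{-1}\|\nabla v_h\|_{0,T}$, giving $\|L(v_h)\|_{0,T}\lesssim h_T^{-1}\|v_h\|_{a,T}$. Combined with the uniform bound on $\lambda_h$ provided by Lemma \ref{Error_Estimate_Crude_Lemma}, this yields $h_T\|\mathcal{R}_T(v_h)\|_{0,T}\lesssim\|v_h\|_{a,T}$. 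For the jump term, the scaled inverse trace inequality $\|p\|_{0,E}\lesssim h_T^{-1/2}\|p\|_{0,T}$ valid for polynomials gives $h_E^{1/2}\|\mathcal{J}_E(v_h)\|_{0,E}\lesssim\|A\nabla v_h\|_{0,\omega_E}\lesssim\|v_h\|_{a,\omega_E}$.

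Squaring and summing the local bounds over $T\in\mathcal{T}_h$, while using shape regularity to ensure each element appears in only a uniformly bounded number of patches $\omega_T$, produces $\eta_h^2(v_h,\Omega)\lesssim\|v_h\|_{a,\Omega}^2$, which is (\ref{eta_H_1_Bounded}) with $C_R$ depending only on $\|a_{ij}\|_{W^{1,\infty}}$, $\|\varphi\|_{L^\infty}$, the a priori bound on $\lambda_h$, the regularity constant $\gamma^*$, and the polynomial degree. The only mildly technical step is handling the variable-coefficient divergence $\nabla\cdot(A\nabla v_h)$, but the assumed $W^{1,\infty}$ regularity of $A$ is precisely what is needed for the product-rule expansion above to carry through with constants independent of the mesh.
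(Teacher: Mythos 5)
Your proof is correct and follows essentially the same approach as the paper: triangle inequality plus Minkowski for (\ref{eta_sum_inequality}), and inverse/inverse-trace estimates applied element- and edge-wise for (\ref{eta_H_1_Bounded}). In fact your write-up is a bit more careful than the paper's in one respect: the paper's displayed bounds only estimate $\|L(v_h)\|_{0,T}$, leaving the $\lambda_h v_h$ contribution to $\mathcal{R}_T(v_h)=\lambda_h v_h - L(v_h)$ implicit, whereas you explicitly note that $\lambda_h$ must be treated as a fixed scalar (so that $\mathcal{R}_T$ is linear) and that its uniform boundedness (from Lemma \ref{Error_Estimate_Crude_Lemma}) is needed for the constant $C_R$.
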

\begin{proof}
The first inequality (\ref{eta_sum_inequality}) can be obtained from the definition of $\eta_h$. Now we prove the second inequality (\ref{eta_H_1_Bounded}).

It is obvious that the inverse estimate implies
\begin{eqnarray}
\|L(v_h)\|_{0,T}&\leq&C_Ah_T^{-1}\|\nabla v_h\|_{0,T}+C_c\|v_h\|_{0,T}\ \ \ \ \forall T\in\mathcal{T}_h,
\end{eqnarray}
where $C_A$ depends on $A$ and the shape regularity constant $\gamma^*$, $C_c$ depends on the coefficient $\varphi$.
Namely, there exist some constants $\tilde{C}_T$ and $C_R$ depending on $C_A$ and $C_c$ such that
\begin{eqnarray}\label{eta_bounded_1}
\sum_{T\in\mathcal{T}_h}h_T^2\|L(v_h)\|_{0,T}^2\leq\sum_{T\in\mathcal{T}_h}\tilde{C}_T^2\|v_h\|_{1,T}^2\leq C_R^2\|v_h\|_{a,\Omega}^2.
\end{eqnarray}
From the trace inequality (\ref{Trace_Inequality}) and the inverse estimate, we have
\begin{eqnarray}\label{eta_bounded_2}
h_E\big\|[[A\nabla v_h]]_E\cdot\nu_E\big\|_{0,E}^2&\leq & \tilde{C}_E\|A\nabla v_h\|_{0,\omega_E}^2 + \tilde{C}_Eh_{T^+}^2\|A\nabla v_h\|_{1,T^+}^2\nonumber\\
&&\ \ \ \ +\tilde{C}_Eh_{T^-}^2\|A\nabla v_h\|_{1,T^-}^2\nonumber\\
&\leq& C_E^2 \|v_h\|_{1,\omega_E}^2,
\end{eqnarray}
where $\omega_E:=T^+\cup T^-$ denotes the patch including the elements sharing the edge $E$ and the constant
$C_E$ depends on $A$ and the shape regularity constant $\gamma^*$.

Hence the desired result (\ref{eta_H_1_Bounded}) can be obtained from (\ref{eta_definition}),
(\ref{eta_bounded_1}), and (\ref{eta_bounded_2}) and the proof is complete.
\end{proof}

Based on Theorem \ref{trans} and Lemma \ref{Eta_Sum_Bounded_Lemma}, the error estimate $\eta_h(u_h,\Omega)$ has
 the following properties.
\begin{theorem}
 Let $h_0$ be small enough and $h\in (0,h_0]$. Then there are mesh independent constants such that
\begin{eqnarray}\label{Upper_Bound_Eigenfunct}
\|u-u_h\|_{a,\Omega}\leq C_1\eta_h(u_h,\Omega)+\mathcal{O}(r(h_0))\|u-u_H\|_{a,\Omega},
\end{eqnarray}
and
\begin{eqnarray}\label{Lower_Bound_Eigenfunct}
C_2^2\eta_h^2(u_h,\Omega)-C_3^2 {\rm osc}(L(u_h),\mathcal{T}_h)^2 \leq  \|u-u_h\|_{a,\Omega}^2 +\mathcal{O}(r^2(h_0))\|u-u_H\|_{a,\Omega}^2.
\end{eqnarray}
Consequently we have
\begin{eqnarray}\label{Upper_Bound_Eigenvalue}
|\lambda-\lambda_h|\lesssim \eta_h^2(u_h,\Omega)+\mathcal{O}(r^2(h_0))\|u-u_H\|_{a,\Omega}^2,
\end{eqnarray}
and
\begin{eqnarray}\label{Lower_Bound_Eigenvalue}
\eta_h^2(u_h,\Omega)-{\rm osc}(L(u_h),\mathcal{T}_h)^2 \lesssim |\lambda-\lambda_h| +\mathcal{O}(r^2(h_0))\|u-u_H\|^2_{a,\Omega}.
\end{eqnarray}
\begin{proof}
From (\ref{Upper_eta_Source_problem}), (\ref{Estimate_u_u_h}), (\ref{u_u_h}), (\ref{w_h_H}), (\ref{u_tilde_u_h}), (\ref{eta_sum_inequality}), and (\ref{eta_H_1_Bounded}), we have
\begin{eqnarray}
&& \|w^h-R_hw^h\|_{a,\Omega} \leq\tilde{C}_1\eta_h(R_hw^h,\Omega)
 = \tilde{C}_1\eta_h(u_h+R_hw^h-u_h,\Omega)\nonumber\\
 &\leq&\tilde{C}_1\eta_h(u_h,\Omega)+ \tilde{C}_1\eta_h(R_hw^h-u_h,\Omega)\nonumber\\
 &\leq&\tilde{C}_1\eta_h(u_h,\Omega)+ \tilde{C}_1C_R\|R_hw^h-u_h\|_{a,\Omega}\nonumber\\
 &\leq& \tilde{C}_1\eta_h(u_h,\Omega)+ \tilde{C}_1C_Rr(h_0)(\|u-u_h\|_{a,\Omega}+\|u-u_H\|_{a,\Omega}),
\end{eqnarray}
which together with (\ref{Estimate_u_u_h}) leads to the desired result (\ref{Upper_Bound_Eigenfunct}) with
\begin{eqnarray}\label{Setting_C_1}
C_1=\tilde{C}_1.
\end{eqnarray}
Combing (\ref{Lower_Bound_Global_2}),  (\ref{Osc_Bounded}), (\ref{u_tilde_u_h}), (\ref{eta_sum_inequality}),
and (\ref{eta_H_1_Bounded}) leads to
\begin{eqnarray}\label{Lower_Bound_Relation}
&&\tilde{C}_2^2\eta_h^2(u_h,\Omega)-\tilde{C}_3^2 {\rm osc}(L(u_h),\mathcal{T}_h)^2 \nonumber\\
&\leq&\tilde{C}_2^2\tilde{\eta}_h^2(R_hw^h,\Omega)-\tilde{C}_3^2 {\rm osc}(L(R_hw^h),\mathcal{T}_h)^2\nonumber\\
&&+(C_L^2+C_R^2)(\tilde{C}_2^2+\tilde{C}_3^2)\|u_h-R_hw^h\|_{a,\Omega}^2\nonumber\\
&\leq& \|w^h-R_hw^h\|_{a,\Omega}^2
+ \tilde{C}\|u_h-R_hw^h\|_{a,\Omega}^2\nonumber\\
&\leq&2\|u-u_h\|_{a,\Omega}^2+ Cr^2(h_0)(\|u-u_h\|_{a,\Omega}^2+\|u-u_H\|_{a,\Omega}^2),
\end{eqnarray}
where the constant $C$ depends on $\tilde{C}$, $C_L$, $C_R$, $\tilde{C}_2$ and $\tilde{C}_3$.
From (\ref{Lower_Bound_Relation}), the desired result (\ref{Lower_Bound_Eigenfunct}) can obtained with
\begin{eqnarray}\label{Setting_C_2_C_3}
C_2^2 = \frac{\tilde{C}_2^2}{2+Cr^2(h_0)},\ \ \ C_3^2 = \frac{\tilde{C}_3^2}{2+Cr^2(h_0)}.
\end{eqnarray}

From Lemma \ref{Expansion_Eigenvalue_Lemma}
and (\ref{Upper_Bound_Eigenfunct}), we obtain (\ref{Upper_Bound_Eigenvalue}) and (\ref{Lower_Bound_Eigenvalue})
 can be derived from (\ref{Expansion_Eigenvalue}) and (\ref{Lower_Bound_Eigenfunct}).
\end{proof}

\end{theorem}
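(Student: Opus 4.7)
The plan is to leverage Theorem \ref{trans}, which already reduces the eigenfunction error $\|u-u_h\|_{a,\Omega}$ to the source-problem error $\|w^h - R_h w^h\|_{a,\Omega}$ up to higher-order terms of size $\mathcal{O}(r(h_0))\|u-u_H\|_{a,\Omega}$. Since $R_h w^h$ is the Galerkin projection of $w^h$ (the solution of the source problem with data $\lambda_h u_h$), the established source-problem upper bound (\ref{Upper_eta_Source_problem}) gives $\|w^h - R_h w^h\|_{a,\Omega} \leq \tilde{C}_1 \tilde{\eta}_h(R_h w^h,\Omega)$. The indicator $\tilde{\eta}_h$ for the source problem and $\eta_h$ for the eigenvalue problem are structurally the same, only with different interior residuals ($f = \lambda_h u_h$ versus the actual eigenvalue RHS), so up to choice of argument they match.

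For (\ref{Upper_Bound_Eigenfunct}), I would therefore replace the argument $R_h w^h$ in the estimator by $u_h$ via the triangle-type inequality (\ref{eta_sum_inequality}) and the continuity bound (\ref{eta_H_1_Bounded}), producing $\eta_h(u_h,\Omega) + C_R \|R_h w^h - u_h\|_{a,\Omega}$. From the proof of Theorem \ref{trans} (specifically (\ref{w_h_H}) and (\ref{u_tilde_u_h})), we already control $\|R_h w^h - u_h\|_{a,\Omega}$ by $r(h_0)(\|u-u_h\|_{a,\Omega}+\|u-u_H\|_{a,\Omega})$. Absorbing the $\|u-u_h\|_{a,\Omega}$ term into the left-hand side (valid when $r(h_0)$ is small) yields (\ref{Upper_Bound_Eigenfunct}) with $C_1 = \tilde{C}_1$.

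For (\ref{Lower_Bound_Eigenfunct}), I apply (\ref{Lower_Bound_Global_2}) to the source problem for $w^h$, giving a lower bound of $\tilde{\eta}_h(R_h w^h,\Omega)^2$ in terms of $\|w^h - R_h w^h\|_{a,\Omega}^2$ plus oscillation. Again I swap $R_h w^h$ for $u_h$ using (\ref{eta_sum_inequality}), (\ref{eta_H_1_Bounded}), and (\ref{Osc_Bounded}); each substitution costs at most a constant times $\|u_h - R_h w^h\|_{a,\Omega}^2$, which is $\mathcal{O}(r^2(h_0))(\|u-u_h\|_{a,\Omega}^2+\|u-u_H\|_{a,\Omega}^2)$. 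Invoking Theorem \ref{trans} to replace $\|w^h - R_h w^h\|_{a,\Omega}^2$ by $\|u-u_h\|_{a,\Omega}^2$ and rearranging (again absorbing small multiples of $\|u-u_h\|_{a,\Omega}^2$ on the left) produces (\ref{Lower_Bound_Eigenfunct}) with constants $C_2, C_3$ of the form indicated in (\ref{Setting_C_2_C_3}).

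Finally, (\ref{Upper_Bound_Eigenvalue}) and (\ref{Lower_Bound_Eigenvalue}) follow by applying the Rayleigh quotient identity (\ref{Expansion_Eigenvalue}) with $w = u_h$, which gives $|\lambda - \lambda_h| = |a(u_h-u,u_h-u) - \lambda(u_h-u,u_h-u)|/(u_h,u_h) \lesssim \|u-u_h\|_{a,\Omega}^2$ (using $\|u_h\|_{0,\Omega}=1$ and the $L^2$ bound (\ref{Error_u_u_h_k_-1})), and then inserting (\ref{Upper_Bound_Eigenfunct}) or (\ref{Lower_Bound_Eigenfunct}) respectively. The main obstacle, which is essentially already solved by Theorem \ref{trans} and Lemma \ref{Eta_Sum_Bounded_Lemma}, is ensuring that the estimator bounds established for an auxiliary boundary-value problem transfer cleanly to the actual eigenfunction approximation while the higher-order terms proportional to $r(h_0)$ can be safely absorbed; the only subtlety is keeping track of which constants depend on $r(h_0)$ and verifying that they stay bounded away from $0$ and $\infty$ once $h_0$ is small.
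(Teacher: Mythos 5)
Your proposal follows essentially the same route as the paper: reduce the eigenfunction error to the source-problem error via Theorem \ref{trans}, apply the source-problem estimator bounds (\ref{Upper_eta_Source_problem}) and (\ref{Lower_Bound_Global_2}) to $w^h=R_h w^h$, swap the argument $R_h w^h$ for $u_h$ using Lemma \ref{Eta_Sum_Bounded_Lemma} and (\ref{Osc_Bounded}) at the cost of $\mathcal{O}(r(h_0))$ terms controlled by (\ref{w_h_H}) and (\ref{u_tilde_u_h}), and finally deduce the eigenvalue bounds from the Rayleigh quotient identity (\ref{Expansion_Eigenvalue}). The explicit remark about absorbing the $\mathcal{O}(r(h_0))\|u-u_h\|_{a,\Omega}$ terms into the left side matches how the paper arrives at the constants in (\ref{Setting_C_1}) and (\ref{Setting_C_2_C_3}).
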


\section{Convergence of adaptive finite element method for eigenvalue problem}
In this section, we give the convergence analysis of the {\bf Adaptive Algorithm $C$} for the eigenvalue problem.

Before establishing the error reduction of the {\bf Adaptive Algorithm $C$} for the eigenvalue problem,
 we give some preparations.

Similarly to Theorem \ref{trans}, we also give some relationships between two level approximations, which
will be used in the following analysis.
\begin{lemma}\label{u_u_h_convergence_Lemma}
 Let $h, H\in (0,h_0]$. If $w^h=\lambda_hKu_h$, $w^H=\lambda_HKu_H$, we have
\begin{eqnarray}
 \|u-u_h\|_{a,\Omega}&=&\|w^H-R_hw^H\|_{a,\Omega}\nonumber\\
 &&\ \  +\mathcal{O}(r(h_0))\big(\|u-u_h\|_{a,\Omega}+\|u-u_H\|_{a,\Omega}\big),\label{u-u_h_H_Space}\\
{\rm osc}(L(u_h),\mathcal{T}_h)&=&{\rm osc}(L(R_hw^H),\mathcal{T}_h)\nonumber\\
&&\ \ +\mathcal{O}(r(h_0))(\|u-u_H\|_{a,\Omega}+\|u-u_h\|_{a,\Omega}), \label{osc_u_h}
\end{eqnarray}
and
\begin{eqnarray}\label{eta_u_h_R_h_w}
\eta_h(u_h,\Omega)&=&\tilde{\eta}_h(R_h w^H,\Omega)
+\mathcal{O}(r(h_0))(\|u-u_h\|_{a,\Omega}+\|u-u_H\|_{a,\Omega}).
\end{eqnarray}
\end{lemma}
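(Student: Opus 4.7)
The plan is to prove all three equalities by the same template used to establish Theorem \ref{trans}: decompose, invoke the reverse triangle inequality, and control each residual term by $r(h_0)$ times the two eigenfunction errors. The two central quantitative tools I will rely on are (i) the bound $\|u_h-\widetilde{u}_h\|_{a,\Omega}\lesssim r(h_0)(\|u-u_h\|_{a,\Omega}+\|u-u_H\|_{a,\Omega})$ from (\ref{u_tilde_u_h}) (noting $R_hw^H=\widetilde{u}_h$), and (ii) the crude bounds from Lemma \ref{Error_Estimate_Crude_Lemma}, which together with the definition of $w^H=\lambda_HKu_H$ give $\|u-w^H\|_{a,\Omega}\lesssim r(h_0)\|u-u_H\|_{a,\Omega}$ by the splitting $u-w^H=\lambda K(u-u_H)+(\lambda-\lambda_H)Ku_H$.

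For (\ref{u-u_h_H_Space}), I would write $u-u_h=(u-w^H)+(w^H-R_hw^H)+(R_hw^H-u_h)$ and use $R_hw^H=\widetilde{u}_h$. Applying the reverse triangle inequality in the energy norm, combined with the two bounds above, yields the claim. This step is essentially a restatement of the proof of Theorem \ref{trans} but with $w^h$ replaced by $w^H$; the only substitution is the estimate on $\|u-w^H\|_{a,\Omega}$, which is controlled by $\|u-u_H\|_{a,\Omega}$ directly rather than by $\|u-u_h\|_{a,\Omega}$.

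For (\ref{osc_u_h}), I would use the subadditivity (\ref{Osc_Sum_inequality}) applied to $L(u_h)=L(u_h-R_hw^H)+L(R_hw^H)$, giving both $\mathrm{osc}(L(u_h),\mathcal{T}_h)\le \mathrm{osc}(L(u_h-R_hw^H),\mathcal{T}_h)+\mathrm{osc}(L(R_hw^H),\mathcal{T}_h)$ and the symmetric inequality. Then Lemma \ref{Osc_Bounded_Lemma} bounds $\mathrm{osc}(L(u_h-R_hw^H),\mathcal{T}_h)\le C_L\|u_h-R_hw^H\|_{a,\Omega}$, and (\ref{u_tilde_u_h}) finishes the step.

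For (\ref{eta_u_h_R_h_w}), the point is that $\eta_h(u_h,\Omega)$ coincides with $\tilde{\eta}_h$ of $u_h$ when the source $f$ in the source-problem indicator is taken to be $\lambda_hu_h$, while $\tilde{\eta}_h(R_hw^H,\Omega)$ corresponds to $f=\lambda_Hu_H$ and the function $R_hw^H$. Splitting the volume residual as $(\lambda_hu_h-Lu_h)-(\lambda_Hu_H-L(R_hw^H))=(\lambda_hu_h-\lambda_Hu_H)-L(u_h-R_hw^H)$ and the jump residual as $[[A\nabla(u_h-R_hw^H)]]_E$, an $\ell^2$-triangle inequality over elements/edges reduces the problem to bounding (a) $\sum_Th_T^2\|\lambda_hu_h-\lambda_Hu_H\|_{0,T}^2\le h_0^2\|\lambda_hu_h-\lambda_Hu_H\|_{0,\Omega}^2$ and (b) a quantity $\lesssim\|u_h-R_hw^H\|_{a,\Omega}^2$ via the estimates (\ref{eta_bounded_1}) and (\ref{eta_bounded_2}) already used in Lemma \ref{Eta_Sum_Bounded_Lemma}. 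The main obstacle is (a): I would write $\lambda_hu_h-\lambda_Hu_H=\lambda_h(u_h-u)+\lambda_h(u-u_H)+(\lambda_h-\lambda_H)u_H$ and combine $\|u-u_{h_k}\|_{0,\Omega}\lesssim\eta_a(h_0)\|u-u_{h_k}\|_{a,\Omega}$ and $|\lambda-\lambda_{h_k}|\lesssim\|u-u_{h_k}\|_{a,\Omega}^2$ from Lemma \ref{Error_Estimate_Crude_Lemma} to conclude $\|\lambda_hu_h-\lambda_Hu_H\|_{0,\Omega}\lesssim r(h_0)(\|u-u_h\|_{a,\Omega}+\|u-u_H\|_{a,\Omega})$. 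Together with (\ref{u_tilde_u_h}) this closes the argument; the reverse inequality follows by the same symmetric triangle reasoning.
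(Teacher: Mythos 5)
Your proposal is correct and tracks the paper's own argument closely for all three identities. The decompositions $u-u_h=(u-w^H)+(w^H-R_hw^H)+(R_hw^H-u_h)$, the use of $\widetilde u_h=R_hw^H$ together with the bound (\ref{u_tilde_u_h}), the estimate $\|u-w^H\|_{a,\Omega}\lesssim\|\lambda u-\lambda_H u_H\|_{0,\Omega}\lesssim r(h_0)\|u-u_H\|_{a,\Omega}$, and the subadditivity-plus-Lemma~\ref{Osc_Bounded_Lemma} argument for the oscillation are exactly the steps the paper takes for (\ref{u-u_h_H_Space}) and (\ref{osc_u_h}).

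Where your treatment is genuinely better is (\ref{eta_u_h_R_h_w}). The paper writes $\eta_h(u_h,\Omega)\le\tilde\eta_h(R_hw^H,\Omega)+\eta_h(u_h-R_hw^H,\Omega)$ as though (\ref{eta_sum_inequality}) directly applies, but $\eta_h$ and $\tilde\eta_h$ carry different element residuals ($\lambda_h u_h-Lu_h$ versus $\lambda_H u_H-L(R_hw^H)$), so this split silently drops the zeroth-order piece $\lambda_h u_h-\lambda_H u_H$. You identify this explicitly: after the $\ell^2$ triangle inequality, the extra volume term $\sum_T h_T^2\|\lambda_h u_h-\lambda_H u_H\|_{0,T}^2\le h_0^2\|\lambda_h u_h-\lambda_H u_H\|_{0,\Omega}^2$ must be controlled, and your decomposition $\lambda_h u_h-\lambda_H u_H=\lambda_h(u_h-u)+\lambda_h(u-u_H)+(\lambda_h-\lambda_H)u_H$ together with (\ref{Error_u_u_h_k_-1}) and (\ref{Error_lambda_h_k}) gives $\|\lambda_h u_h-\lambda_H u_H\|_{0,\Omega}\lesssim r(h_0)(\|u-u_h\|_{a,\Omega}+\|u-u_H\|_{a,\Omega})$, so the conclusion stands. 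In short, same strategy as the paper, but you supply the missing verification that the residual mismatch is a higher-order perturbation; that is a worthwhile improvement in rigor rather than a divergence from the paper's route.
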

\begin{proof}
First we have
\begin{eqnarray}\label{u-u_h_w_H}
u-u_h=u-w^H+w^H-R_hw^H+R_hw^H-u_h.
\end{eqnarray}
Similarly to (\ref{u_w_h}), the following inequality holds
\begin{eqnarray}\label{u_w_H_Convergence}
&&\|u-w^H\|_{a,\Omega}=\|u-K\lambda_Hu_H\|_{a,\Omega}=\|K\lambda u-K\lambda_Hu_H\|_{a,\Omega}\nonumber\\
&\lesssim& \|\lambda u-\lambda_Hu_H\|_{0,\Omega}= \mathcal{O}\big(r(h_0)\big)\|u-u_H\|_{a,\Omega}
\end{eqnarray}
Combining (\ref{u_tilde_u_h}), (\ref{u-u_h_w_H}), and (\ref{u_w_H_Convergence}) leads to (\ref{u-u_h_H_Space}).

By the property (\ref{Osc_Bounded}) of oscillation, we have
\begin{eqnarray*}
{\rm osc}(L(R_hw^H-u_h),\mathcal{T}_h) &\lesssim& C_L\|R_hw^H-u_h\|_{a,\Omega},
\end{eqnarray*}
which together  with  (\ref{u_tilde_u_h}) and the fact $\widetilde{u}_h=R_hw^H$ implies
\begin{eqnarray}\label{osc_w_h_w_H}
{\rm osc}(L(R_hw^H-u_h),\mathcal{T}_h) &\lesssim& r(h_0) (\|u-u_H\|_{a,\Omega}+\|u-u_h\|_{a,\Omega}).
\end{eqnarray}
Hence from (\ref{Osc_Bounded}), (\ref{osc_w_h_w_H}), and
\begin{eqnarray}
L(u_h)&=& L(R_hw^h)+L(R_hw^H-u_h),
\end{eqnarray}
we can obtain the desired result (\ref{osc_u_h}).

Now we come to consider the relation (\ref{eta_u_h_R_h_w}).
Using (\ref{u_tilde_u_h}), (\ref{eta_H_1_Bounded}), and the fact $\widetilde{u}_h=R_hw^H$, we obtain
\begin{eqnarray}\label{tilde_eta_w_h_w_H}
\tilde{\eta}_h(R_hw^H-u_h,\Omega) &\lesssim& r(h_0)(\|u-u_H\|_{a,\Omega}+\|u-u_h\|_{a,\Omega}).
\end{eqnarray}
Combining (\ref{eta_sum_inequality}), (\ref{tilde_eta_w_h_w_H}) and the fact
\begin{eqnarray*}
\eta_h(u_h,\Omega) &=& \eta_h(R_hw^H+u_h-R_hw^H,\Omega)\leq \tilde{\eta}_h(R_hw^H,\Omega)+\eta_h(u_h-R_hw^H,\Omega)\nonumber\\ &\leq&\tilde{\eta}_h(R_hw^H,\Omega)+\mathcal{O}(r(h_0))(\|u-u_h\|_{a,\Omega}+\|u-u_H\|_{a,\Omega})
\end{eqnarray*}
leads to the desired result (\ref{eta_u_h_R_h_w}) and the proof is complete.
\end{proof}

Now we are at the position to give the error reduction of {\bf Adaptive Algorithm $C$} for the eigenvalue computations.
\begin{theorem}\label{Convergence_u_h_Theorem}
For the successive eigenfunction approximations $u_H$ and $u_h$ produced by {\bf Adaptive Algorithm $C$},
there exist constants $\gamma>0$, $\alpha_0$, and $\alpha \in (0, 1)$, depending only on the shape
regularity of meshes, $C_a$, $c_a$ and the parameter $\theta$ used by {\bf Adaptive Algorithm $C$}, such that
\begin{eqnarray}\label{Convergence_u_h_u_H_u_H_1}
\|u-u_h\|^2_{a,\Omega}+\gamma \eta_h^2(u_h,\Omega)&\leq&\alpha^2\big(\|u-u_H\|^2_{a,\Omega}
+\gamma\eta^2_H(u_H,\Omega)\big)\nonumber\\
&& + \alpha_0^2 r^2(h_0)\|u-u_{H_{-1}}\|_{a,\Omega},
\end{eqnarray}
provided $h_0\ll 1$.
\end{theorem}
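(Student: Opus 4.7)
The plan is to reduce the adaptive convergence of Algorithm~$C$ to the source-problem contraction of Theorem~\ref{Adaptive_Convergence_Source_Theorem}, using Lemma~\ref{u_u_h_convergence_Lemma} as a bridge between the eigenfunction quantities $\|u-u_h\|_{a,\Omega}$, $\eta_h(u_h,\Omega)$ and the source-problem quantities $\|w^H-R_hw^H\|_{a,\Omega}$, $\tilde\eta_h(R_hw^H,\Omega)$ associated with the auxiliary source problem $Lw^H=\lambda_Hu_H$. I set $\gamma=\tilde\gamma$, the constant supplied by (\ref{Def_tilde_Gamma_source}).

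First, I apply Lemma~\ref{u_u_h_convergence_Lemma} together with Young's inequality (small parameter $\delta>0$) to pass from the $\mathcal{T}_h$-side eigenvalue quantities to the corresponding source-problem quantities:
\begin{equation*}
\|u-u_h\|_{a,\Omega}^2+\gamma\eta_h^2(u_h,\Omega)\le(1+\delta)\bigl(\|w^H-R_hw^H\|_{a,\Omega}^2+\gamma\tilde\eta_h^2(R_hw^H,\Omega)\bigr)+C_\delta r^2(h_0)\bigl(\|u-u_h\|_{a,\Omega}^2+\|u-u_H\|_{a,\Omega}^2\bigr).
\end{equation*}
Symmetrically, using $\tilde u_H=R_Hw^{H_{-1}}$, the coercivity estimate (\ref{u_tilde_u_h}), and $\|w^H-w^{H_{-1}}\|_{a,\Omega}=\|\lambda_HKu_H-\lambda_{H_{-1}}Ku_{H_{-1}}\|_{a,\Omega}\lesssim r(h_0)(\|u-u_H\|_{a,\Omega}+\|u-u_{H_{-1}}\|_{a,\Omega})$, the $\mathcal{T}_H$-side source quantity is controlled by the eigenvalue quantity:
\begin{equation*}
\|w^H-R_Hw^H\|_{a,\Omega}^2+\gamma\tilde\eta_H^2(R_Hw^H,\Omega)\le(1+\delta)\bigl(\|u-u_H\|_{a,\Omega}^2+\gamma\eta_H^2(u_H,\Omega)\bigr)+C_\delta r^2(h_0)\bigl(\|u-u_H\|_{a,\Omega}^2+\|u-u_{H_{-1}}\|_{a,\Omega}^2\bigr).
\end{equation*}

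Next I invoke Theorem~\ref{Adaptive_Convergence_Source_Theorem} for the source problem with datum $\lambda_Hu_H$ on the transition $\mathcal{T}_H\to\mathcal{T}_h$ to obtain
\begin{equation*}
\|w^H-R_hw^H\|_{a,\Omega}^2+\gamma\tilde\eta_h^2(R_hw^H,\Omega)\le\xi^2\bigl(\|w^H-R_Hw^H\|_{a,\Omega}^2+\gamma\tilde\eta_H^2(R_Hw^H,\Omega)\bigr),
\end{equation*}
with $\xi\in(0,1)$ furnished by Theorem~\ref{Adaptive_Convergence_Source_Theorem}. Chaining the three displays produces
\begin{equation*}
\|u-u_h\|_{a,\Omega}^2+\gamma\eta_h^2(u_h,\Omega)\le\xi^2(1+\delta)^2\bigl(\|u-u_H\|_{a,\Omega}^2+\gamma\eta_H^2(u_H,\Omega)\bigr)+Cr^2(h_0)\bigl(\|u-u_h\|_{a,\Omega}^2+\|u-u_H\|_{a,\Omega}^2+\|u-u_{H_{-1}}\|_{a,\Omega}^2\bigr).
\end{equation*}
Since $r(h_0)\ll 1$, I absorb $Cr^2(h_0)\|u-u_h\|_{a,\Omega}^2$ on the left and merge $Cr^2(h_0)\|u-u_H\|_{a,\Omega}^2$ into the coefficient of $\|u-u_H\|_{a,\Omega}^2+\gamma\eta_H^2(u_H,\Omega)$, arriving at (\ref{Convergence_u_h_u_H_u_H_1}) with $\alpha^2:=\xi^2(1+\delta)^2+O(r^2(h_0))<1$ for $\delta$ and $h_0$ sufficiently small.

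The main obstacle is justifying the invocation of Theorem~\ref{Adaptive_Convergence_Source_Theorem}: Algorithm~$C$ marks using the eigenvalue indicator $\eta_H(u_H,T)$, while Theorem~\ref{Adaptive_Convergence_Source_Theorem} needs the D\"orfler condition for the source indicator $\tilde\eta_H(R_Hw^H,T)$. The transfer relies on the elementwise identity $\eta_H^2(u_H,T)-\tilde\eta_H^2(R_Hw^H,T)=\bigl(\eta_H(u_H,T)-\tilde\eta_H(R_Hw^H,T)\bigr)\bigl(\eta_H(u_H,T)+\tilde\eta_H(R_Hw^H,T)\bigr)$ followed by Cauchy--Schwarz and the stability estimates (\ref{eta_sum_inequality})--(\ref{eta_H_1_Bounded}); this bounds the global gap by $C\|u_H-R_Hw^H\|_{a,\Omega}\bigl(\eta_H(u_H,\Omega)+\tilde\eta_H(R_Hw^H,\Omega)\bigr)$, which via $\tilde u_H=R_Hw^{H_{-1}}$ and (\ref{u_tilde_u_h}) is of order $r(h_0)$ relative to the leading indicator. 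Hence for $h_0$ small enough the marked set $\widehat{\mathcal{T}}_H$ also satisfies a D\"orfler criterion for $\tilde\eta_H(R_Hw^H,\cdot)$ with parameter arbitrarily close to~$\theta$. The appearance of $\|u-u_{H_{-1}}\|_{a,\Omega}$ on the right-hand side of (\ref{Convergence_u_h_u_H_u_H_1}) is intrinsic to the multilevel-correction framework, tracing back to the identity $\tilde u_H=R_Hw^{H_{-1}}$ that couples the current step to the previous eigenfunction approximation.
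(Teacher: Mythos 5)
Your proof follows the same route as the paper's: use Lemma~\ref{u_u_h_convergence_Lemma} plus Young's inequality to translate between the eigenvalue quantities $\|u-u_h\|_{a,\Omega}$, $\eta_h(u_h,\Omega)$ and the source-problem quantities $\|w^H-R_hw^H\|_{a,\Omega}$, $\tilde\eta_h(R_hw^H,\Omega)$ on both levels $h$ and $H$ (the latter direction being where the $\|u-u_{H_{-1}}\|_{a,\Omega}$ term enters, via $\tilde u_H=R_Hw^{H_{-1}}$ and $\|w^H-w^{H_{-1}}\|_{a,\Omega}\lesssim r(h_0)(\cdots)$), sandwich the source-problem contraction (\ref{Adaptive_Convergence_Source}) between these two estimates, absorb $Cr^2(h_0)\|u-u_h\|_{a,\Omega}^2$ on the left, and collect constants. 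Your two sandwiching estimates match (\ref{Error_Bound_1}) and (\ref{Error_Bound_2}) in the paper, and the chaining argument is identical.

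Where you go beyond the paper is in raising the obstacle about the marking indicator: the transition $\mathcal{T}_H\to\mathcal{T}_h$ in {\bf Adaptive Algorithm $C$} is driven by D\"orfler marking on $\eta_H(u_H,\cdot)$, whereas Theorem~\ref{Adaptive_Convergence_Source_Theorem} assumes D\"orfler marking on the source indicator $\tilde\eta_H(R_Hw^H,\cdot)$. The paper simply invokes Theorem~\ref{Adaptive_Convergence_Source_Theorem} without addressing this mismatch, so you have correctly identified a hypothesis that must be verified. Your sketch for closing it, however, is not yet watertight: after bounding $\sum_T|\eta_H^2(u_H,T)-\tilde\eta_H^2(R_Hw^H,T)|$ by $C_R\|u_H-R_Hw^H\|_{a,\Omega}\big(\eta_H(u_H,\Omega)+\tilde\eta_H(R_Hw^H,\Omega)\big)$, the factor $\|u_H-R_Hw^H\|_{a,\Omega}$ is of order $r(h_0)\big(\|u-u_H\|_{a,\Omega}+\|u-u_{H_{-1}}\|_{a,\Omega}\big)$, and you implicitly need $\|u-u_{H_{-1}}\|_{a,\Omega}\lesssim\eta_H(u_H,\Omega)$ for the discrepancy to be negligible relative to $\eta_H^2(u_H,\Omega)$. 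That bound is not automatic (it is the error one level coarser); one has to invoke the induction hypothesis or the assumption (\ref{Assumption_h_0}) to control the ratio. So state explicitly what inequality between successive levels you are assuming before declaring the D\"orfler parameter perturbation small. The minor choice $\gamma=\tilde\gamma$ versus the paper's $\gamma=\tilde\gamma/(1-C_4\delta_1^{-1}r^2(h_0))$ is cosmetic and handled by your absorption step.
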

\begin{proof}
Since $w^h=\lambda_hKu_h$ and $w^H=\lambda_HKu_H$, we conclude from Theorem \ref{Adaptive_Convergence_Source_Theorem}
 that there exists constant $\tilde{\gamma}>0$ and $\xi\in (0,1)$ such that
\begin{eqnarray}\label{Convergence_Source_Problem}
\|w^H-R_hw^H\|_{a,\Omega}^2+\tilde{\gamma}\tilde{\eta}_h^2(R_hw^H,\Omega)&\leq &
\xi^2\big(\|w^H-R_Hw^H\|_{a,\Omega}^2\nonumber\\
&&\ \ \ \ +\tilde{\gamma}\tilde{\eta}_H^2(R_Hw^H,\Omega)\big).
\end{eqnarray}
From (\ref{u-u_h_H_Space}) and (\ref{eta_u_h_R_h_w}), there exists a constant $\hat{C}>0$ such that
\begin{eqnarray}\label{Error_Bound_1}
&&\|u-u_h\|_{a,\Omega}^2+\tilde{\gamma}\eta_h^2(u_h,\Omega)\nonumber\\
&\leq&(1+\delta_1)\|w^H-R_hw^H\|_{a,\Omega}^2+\hat{C}(1+\delta_1^{-1})r^2(h_0)(\|u-u_h\|_{a,\Omega}^2
+\|u-u_H\|_{a,\Omega}^2)\nonumber\\
&&+(1+\delta_1)\tilde{\gamma}\tilde{\eta}_h^2(R_hw^H,\Omega)+\hat{C}\tilde{\gamma}(1+\delta_1^{-1})r^2(h_0)
(\|u-u_h\|_{a,\Omega}^2+\|u-u_H\|_{a,\Omega}^2)\nonumber\\
&\leq&(1+\delta_1) \big(\|w^H-R_hw^H\|_{a,\Omega}^2 + \tilde{\gamma}\tilde{\eta}_h^2(R_hw^H,\Omega)\big)\nonumber\\
&& + 
C_4\delta_1^{-1}r^2(h_0)(\|u-u_h\|_{a,\Omega}^2+\|u-u_H\|_{a,\Omega}^2)  ,
\end{eqnarray}
where $C_4$ depends on the constants $\hat{C}$ and $\tilde{\gamma}$ and the Young inequality is used with
 $\delta_1\in (0,1)$ satisfying
\begin{eqnarray*}
(1+\delta_1)\xi^2<1.
\end{eqnarray*}
The similar argument leads to
\begin{eqnarray}\label{Error_Bound_2}
&&\|w^H-R_Hw^H\|_{a,\Omega}^2+\tilde{\gamma}\tilde{\eta}_h^2(R_Hw^H,\Omega)\nonumber\\
&\leq&(1+\delta_2)\|u-u_H\|_{a,\Omega}^2+\hat{C}(1+\delta_2^{-1})r^2(h_0)(\|u-u_H\|_{a,\Omega}^2
+\|u-u_{H_{-1}}\|_{a,\Omega}^2)\nonumber\\
&&+(1+\delta_2)\tilde{\gamma}\eta_H^2(u_H,\Omega)+\hat{C}\tilde{\gamma}(1+\delta_2^{-1})r^2(h_0)
(\|u-u_H\|_{a,\Omega}^2+\|u-u_{H_{-1}}\|_{a,\Omega}^2)\nonumber\\
&=&(1+\delta_2)\big(\|u-u_H\|_{a,\Omega}^2+\tilde{\gamma}\eta_H^2(u_H,\Omega)\big)\nonumber\\
&& + 
C_4\delta_2^{-1}r^2(h_0)
(\|u-u_H\|_{a,\Omega}^2+\|u-u_{H_{-1}}\|_{a,\Omega}^2),
\end{eqnarray}
where $u_{H_{-1}}$ denotes the eigenfunction approximation obtained on the mesh level $\mathcal{T}_{h_{k-1}}$
 before $\mathcal{T}_H$ and $\delta_2\in (0,1)$ satisfies
\begin{eqnarray}\label{Def_delta_1_delta_2}
(1+\delta_1)(1+\delta_2+C_4\delta_2^{-1}r^2(h_0))\xi^2<1.
\end{eqnarray}

Combing (\ref{Convergence_Source_Problem}) and (\ref{Error_Bound_1}) leads to
\begin{eqnarray}\label{COnvergence_Derive_1}
&&\|u-u_h\|_{a,\Omega}^2+\tilde{\gamma}\eta_h^2(u_h,\Omega)\nonumber\\
&\leq&(1+\delta_1)\xi^2\big(\|w^H-R_Hw^H\|_{a,\Omega}^2+\tilde{\gamma}\tilde{\eta}_H^2(R_Hw^H,\Omega)\Big)\nonumber\\
&&+C_4\delta_1^{-1}r^2(h_0)\big(\|u-u_h\|_{a,\Omega}^2+\|u-u_H\|_{a,\Omega}^2\big).
\end{eqnarray}
From (\ref{Error_Bound_2}) and (\ref{COnvergence_Derive_1}), we have
\begin{eqnarray}
\hskip-0.5cm\|u-u_h\|_{a,\Omega}^2+\tilde{\gamma}\eta_h^2(u_h,\Omega)
&\leq&(1+\delta_1)\xi^2\Big((1+ \delta_2)\big(\|u-u_H\|_{a,\Omega}^2+\tilde{\gamma}\eta_H^2(u_H,\Omega)\big)\nonumber\\
&&\hskip-0.5cm+C_4\delta_2^{-1}r^2(h_0)\big(\|u-u_H\|_{a,\Omega}^2+\|u-u_{H_{-1}}\|^2_{a,\Omega}\big)\Big)\nonumber\\
&&\hskip-0.5cm +C_4\delta_1^{-1}r^2(h_0)\Big(\|u-u_h\|_{a,\Omega}^2+\|u-u_H\|_{a,\Omega}^2\Big).
\end{eqnarray}
Consequently,
\begin{eqnarray}
&&\big(1-C_4\delta_1^{-1}r^2(h_0)\big)\|u-u_h\|_{a,\Omega}^2+\tilde{\gamma}\eta_h^2(u_h,\Omega)\nonumber\\
&\leq&\big((1+\delta_1)(1+\delta_2+C_4\delta_2^{-1}r^2(h_0))\xi^2
+C_4\delta_1^{-1}r^2(h_0)\big)\|u-u_H\|_{a,\Omega}^2\nonumber\\
&&\ \ \ \  +(1+\delta_1)(1+\delta_2)\xi^2\tilde{\gamma}\eta_H^2(u_H,\Omega)\nonumber\\
&&\ \ \ \ +C_4(1+\delta_1)\delta_2^{-1}\xi^2r^2(h_0)\|u-u_{H_{-1}}\|_{a,\Omega}^2,
\end{eqnarray}
that is
\begin{eqnarray}
&&\|u-u_h\|^2_{a,\Omega}+\frac{\tilde{\gamma}}{1-C_4\delta_1^{-1}r^2(h_0)}\eta_H^2(u_h,\Omega)\nonumber\\
&\leq& \frac{(1+\delta_1)(1+\delta_2+C_4\delta_2^{-1}r^2(h_0))\xi^2+C_4\delta_1^{-1}r^2(h_0)}{1-C_4\delta_1^{-1}r^2(h_0)}
\|u-u_H\|^2_{a,\Omega}\nonumber\\
&&+\frac{(1+\delta_1)(1+\delta_2)\xi^2\tilde{\gamma}}{1-C_4\delta_1^{-1}r^2(h_0)}\eta^2_H(u_H,\Omega)\nonumber\\
&& + \frac{C_4(1+\delta_1)\delta_2^{-1}\xi^2}{1-C_4\delta_1^{-1}r^2(h_0)}r^2(h_0)\|u-u_{H_{-1}}\|_{a,\Omega}^2.
\end{eqnarray}
Since $h_0\ll 1$ implies $r(h_0)\ll 1$, we have that the constant $\alpha$ defined by
\begin{eqnarray}
 \alpha:=\left( \frac{(1+\delta_1)(1+\delta_2+C_4\delta_2^{-1}r^2(h_0))\xi^2+C_4\delta_1^{-1}r^2(h_0)}{1-C_4\delta_1^{-1}r^2(h_0)} \right)^{1/2}
\end{eqnarray}
satisfying $\alpha\in(0,1)$.  Therefore
\begin{eqnarray}
&&\|u-u_h\|^2_{a,\Omega}+\frac{\tilde{\gamma}}{1-C_4\delta_1^{-1}r^2(h_0)}\eta_H^2(u_h,\Omega)
\leq \alpha^2\Big(\|u-u_H\|^2_{a,\Omega}\nonumber\\
&&+\frac{(1+\delta_1)(1+\delta_2)\xi^2\tilde{\gamma}}{(1+\delta_1)(1+\delta_2+C_4\delta_2^{-1}r^2(h_0))\xi^2
+C_4\delta_1^{-1}r^2(h_0)} \eta^2_H(u_H,\Omega)\Big)\nonumber\\
&&+ \frac{C_4(1+\delta_1)\delta_2^{-1}\xi^2r^2(h_0)}{1-C_4\delta_1^{-1}r^2(h_0)}\|u-u_{H_{-1}}\|_{a,\Omega}^2.
\end{eqnarray}
If we choose
\begin{eqnarray}\label{Def_Gamma}
\gamma:=\frac{\tilde{\gamma}}{1-C_4\delta_1^{-1}r^2(h_0)},
\end{eqnarray}
we arrive at (\ref{Convergence_u_h_u_H_u_H_1}) by using the fact
\begin{eqnarray*}
\frac{(1+\delta_1)(1+\delta_2)\xi^2\tilde{\gamma}}
{(1+\delta_1)(1+\delta_2+C_4\delta_2^{-1}r^2(h_0))\xi^2+C_4\delta_1^{-1}r^2(h_0)} <\gamma
\end{eqnarray*}
and setting
\begin{eqnarray*}
\alpha_0^2 = \frac{C_4(1+\delta_1)\delta_2^{-1}\xi^2}{1-C_4\delta_1^{-1}r^2(h_0)}.
\end{eqnarray*}
Hence the proof is complete.
\end{proof}

Based on Theorem \ref{Convergence_u_h_Theorem}, we can give the following error estimate for {\bf Adaptive Algorithm $C$}.
\begin{theorem}\label{Convergence_u_h_lambda_h_Theorem}
 Suppose $(\lambda,u)\in \mathcal{R}\times H_0^1(\Omega)$ be a simple eigenpair of (\ref{eigenvalue problem}),
 $(\lambda_{h_k},u_{h_k})$ be a sequence of finite element solutions produced by {\bf Adaptive Algorithm C}.
 When $h_0$ is small enough, there exist constants $\beta>0$ and $\bar{\alpha}\in(0,1)$, depending
 on the shape regularity of meshes and the parameter $\theta$, such that for any two consecutive iterates $k$ and $k+1$
\begin{eqnarray}\label{Convergence_Adaptive_Eigenfunct_1}
d_{h_{k+1}}^2\leq \alpha^2d_{h_k}^2+\alpha_0^2 r^2(h_0)\|u-u_{h_{k-1}}\|_{a,\Omega}^2,
\end{eqnarray}
and
\begin{eqnarray}\label{Convergence_Adaptive_Eigenfunct_2}
d_{h_{k+1}}^2+\beta^2 r^2(h_0) d_{h_k}^2\leq \bar{\alpha}^2\big(d_{h_k}^2+\beta^2 r^2(h_0) d_{h_{k-1}}^2\big),
\end{eqnarray}
where $d^2_{h_k}=\|u-u_{h_k}\|_{a,\Omega}^2+\gamma \eta_{h_k}^2(u_{h_k},\Omega)$.
 Then, {\bf Adaptive Algorithm C} converges with a linear rate $\bar{\alpha}$, i.e. the $n$-th iteration solution
 $(\lambda_{h_n},u_{h_n})$ of {\bf Adaptive Algorithm $C$} has the following error estimates
\begin{eqnarray}
d_{h_n}^2+\beta^2 r^2(h_0) d_{h_{n-1}}^2&\leq& C_0\bar{\alpha}^{2n},\label{Error_Adaptive_Eigenfunct}\\
|\lambda_{h_n}-\lambda| &\lesssim& \bar{\alpha}^{2n},\label{Error_Adaptive_Eigenvalue}
\end{eqnarray}
where $C_0=\|u-u_{h_{0}}\|_{a,\Omega}^2+\gamma\eta^2_{h_{0}}(u_{h_{0}},\Omega)$.
\end{theorem}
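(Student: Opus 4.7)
The proof plan breaks into three stages, each short once the heavy lifting of Theorem \ref{Convergence_u_h_Theorem} is in place.

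First, I would obtain (\ref{Convergence_Adaptive_Eigenfunct_1}) directly from Theorem \ref{Convergence_u_h_Theorem} by specializing $H=h_k$, $h=h_{k+1}$ and using the trivial bound $\|u-u_{h_{k-1}}\|_{a,\Omega}^2 \leq d_{h_{k-1}}^2$. This is essentially a rewriting; no new estimate is required.

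The main work is to massage (\ref{Convergence_Adaptive_Eigenfunct_1}) into the self-similar two-step recurrence (\ref{Convergence_Adaptive_Eigenfunct_2}). The idea is to add a multiple of $r^2(h_0) d_{h_k}^2$ to both sides so the right-hand side takes exactly the same shape as the left. Adding $\beta^2 r^2(h_0) d_{h_k}^2$ gives
\begin{eqnarray*}
d_{h_{k+1}}^2+\beta^2 r^2(h_0)d_{h_k}^2 \leq \bigl(\alpha^2+\beta^2 r^2(h_0)\bigr) d_{h_k}^2 + \alpha_0^2 r^2(h_0) d_{h_{k-1}}^2.
\end{eqnarray*}
To absorb the two coefficients into a single contraction factor $\bar\alpha^2$, I need both
\begin{eqnarray*}
\alpha^2+\beta^2 r^2(h_0)\leq \bar\alpha^2\qquad\text{and}\qquad \alpha_0^2\leq \bar\alpha^2 \beta^2 .
\end{eqnarray*}
The first forces $\bar\alpha^2$ slightly larger than $\alpha^2$; the second forces $\beta^2\geq \alpha_0^2/\bar\alpha^2$. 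Substituting yields the quadratic constraint $\bar\alpha^4-\alpha^2\bar\alpha^2-\alpha_0^2 r^2(h_0)\geq 0$, which admits $\bar\alpha\in(\alpha,1)$ provided $r(h_0)$ is small enough; concretely one can take $\bar\alpha^2$ just above $(\alpha^2+\sqrt{\alpha^4+4\alpha_0^2 r^2(h_0)})/2$ and then set $\beta^2=\alpha_0^2/\bar\alpha^2$. The tuning of $\bar\alpha$ and $\beta$ is the sole subtle point of the proof; everything else is bookkeeping.

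Next, (\ref{Error_Adaptive_Eigenfunct}) follows by iterating (\ref{Convergence_Adaptive_Eigenfunct_2}): setting $D_k := d_{h_k}^2+\beta^2 r^2(h_0) d_{h_{k-1}}^2$, the recurrence gives $D_{k+1}\leq \bar\alpha^2 D_k$, hence $D_n\leq \bar\alpha^{2n}(D_0/\bar\alpha^0)$. To reach the clean right-hand side $C_0\bar\alpha^{2n}$ one handles the base step $k=0$ separately by invoking (\ref{Convergence_Adaptive_Eigenfunct_1}) once: $d_{h_1}^2 \leq (\alpha^2+\alpha_0^2 r^2(h_0))d_{h_0}^2$, so $D_1\leq (\alpha^2+(\alpha_0^2+\beta^2)r^2(h_0))d_{h_0}^2\leq \bar\alpha^2 C_0$ under the same smallness assumption on $r(h_0)$. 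Iteration then yields (\ref{Error_Adaptive_Eigenfunct}).

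Finally, (\ref{Error_Adaptive_Eigenvalue}) is immediate: by Lemma \ref{Expansion_Eigenvalue_Lemma} (or equivalently by (\ref{Error_lambda_h_k}) applied on the composite space $V_{h_0,h_n}$) one has $|\lambda_{h_n}-\lambda|\lesssim \|u-u_{h_n}\|_{a,\Omega}^2\leq d_{h_n}^2$, and the previous step bounds $d_{h_n}^2$ by $C_0\bar\alpha^{2n}$. The only real obstacle throughout is the algebraic balancing of $\bar\alpha$, $\beta$ and $r(h_0)$ described above; once $h_0$ is made small enough all constants fall into place automatically.
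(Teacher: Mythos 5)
Your proposal is correct and follows essentially the same route as the paper. The key algebraic step — choosing $\bar\alpha^2$ and $\beta^2$ so that the two-step recursion self-replicates — is exactly what the paper does: the paper imposes the two equalities $\bar\alpha^2-\beta^2r^2(h_0)=\alpha^2$ and $\bar\alpha^2\beta^2=\alpha_0^2$, solves them to get $\bar\alpha^2=\frac{\alpha^2+\sqrt{\alpha^4+4\alpha_0^2r^2(h_0)}}{2}$ and $\beta^2=\frac{2\alpha_0^2}{\alpha^2+\sqrt{\alpha^4+4\alpha_0^2r^2(h_0)}}$, and notes $\bar\alpha<1$ once $h_0$ is small; your inequalities $\alpha^2+\beta^2 r^2(h_0)\leq\bar\alpha^2$, $\alpha_0^2\leq\bar\alpha^2\beta^2$ lead to precisely the same quadratic $\bar\alpha^4-\alpha^2\bar\alpha^2-\alpha_0^2r^2(h_0)\geq 0$ and hence to the same threshold value. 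The remaining steps — iteration of the recursion to get (\ref{Error_Adaptive_Eigenfunct}), and Lemma \ref{Expansion_Eigenvalue_Lemma} for (\ref{Error_Adaptive_Eigenvalue}) — match the paper too. You were even a bit more careful about the base case $n=1$ (where a term $\|u-u_{h_{-1}}\|_{a,\Omega}$ nominally appears) than the paper, which simply calls the final estimates "natural results"; strictly one would want to adopt a convention such as $u_{h_{-1}}:=u_{h_0}$ or argue that the correction-induced term is absent at $k=0$ since $u_{h_0}$ comes from a direct eigensolve, but this is a common omission and not a gap in your reasoning relative to the paper's own level of detail.
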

\begin{proof}
It is obvious that (\ref{Convergence_Adaptive_Eigenfunct_1}) can be derived directly from Theorem \ref{Convergence_u_h_Theorem}.  Now we choose $\bar{\alpha}$ and $\beta$ such that
\begin{eqnarray*}
\bar{\alpha}^2-\beta^2r^2(h_0) &=&\alpha^2,\\
\bar{\alpha}^2\beta^2 &=&\alpha_0^2.
\end{eqnarray*}
This equation leads to
 $$\bar{\alpha}^2=\frac{\alpha^2+\sqrt{\alpha^4+4\alpha_0^2r^2(h_0)}}{2} \ \ {\rm and}\ \
\beta^2=\frac{2\alpha_0^2}{\alpha^2+\sqrt{\alpha^4+4\alpha_0^2r^2(h_0)}}.$$
As we know $\bar{\alpha}<1$ provided that $\alpha<1$ and $h_0$ is small enough. Then (\ref{Convergence_Adaptive_Eigenfunct_2})
can be obtained with the chosen constants $\bar{\alpha}$ and $\beta$.

The estimates (\ref{Error_Adaptive_Eigenfunct}) and (\ref{Error_Adaptive_Eigenvalue}) are
natural results from (\ref{Convergence_Adaptive_Eigenfunct_2}) and (\ref{Expansion_Eigenvalue}) and
the proof is complete.
\end{proof}

\section{Complexity analysis}

Due to Theorems \ref{Adaptive_Convergence_Source_Theorem} and \ref{Convergence_u_h_Theorem},
we are able to analyze the complexity of {\bf Adaptive Algorithm $C$} for eigenvalue problem via
the complexity result of the associated boundary value problems.

In this section, we assume the initial mesh size $h_0$ is small enough such that
\begin{eqnarray}\label{Assumption_h_0}
r(h_0)\|u-u_{h_{k-1}}\|_{a,\Omega}^2 \leq \|u-u_{h_k}\|^2_{a,\Omega}. 
\end{eqnarray}
Then from Theorem \ref{Convergence_u_h_Theorem}, we have the following error reduction property of {\bf Adaptive Algorithm $C$}
\begin{eqnarray}\label{Convergence_u_h_k_u_h_k_1}
\|u-u_{h_{k+1}}\|^2_{a,\Omega}+\gamma\eta^2_{h_{k+1}}(u_{h_{k+1}},\Omega) \leq \tilde{\alpha}^2 \big(\|u-u_{h_k}\|^2_{a,\Omega}+\gamma\eta^2_{h_k}(u_{h_k},\Omega)\big)
\end{eqnarray}
with $\tilde{\alpha}^2=\alpha^2+\alpha_0^2r(h_0)$.

Based on this contraction result, we also give the complexity analysis with the similar way of \cite{CasconKreuzerNochettoSiebert}
and \cite{DaiXuZhou}.
Let $\mathcal{T}_{h_k} (k\geq 0)$ be the sequence of conforming nested partitions generated by {\bf REFINE}
starting from $\mathcal{T}_{h_0}$ with $h_0\ll 1$. We denote $\mathcal{T}_{h_{k,*}}$ a refinement of $\mathcal{T}_{h_k}$
(in general nonconforming), $\mathcal{M}(\mathcal{T}_{h_k})$ the set of elements of $\mathcal{T}_{h_k}$ that were refined
 in $\mathcal{T}_{h_k}$. Let $I_{h_{k+1}}: C(\Omega)\cap H_0^1(\Omega)\rightarrow V_{h_{k+1}}$ satisfy
\begin{eqnarray*}
I_{h_{k+1}}v=v\ \ {\rm on}\  T\not\in \mathcal{M}(\mathcal{T}_{h_k})\ \ \forall v\in V_{h_{k+1}}
\end{eqnarray*}
and set
\begin{equation*}
V_{h_{k,*}} =\left\{
\begin{array}{ll}
V_{h_k}\cup \big((I-I_{h_{k+1}})V_{h_{k+1}}\big), & \ {\rm if}\ \mathcal{T}_{h_{k,*}}\ {\rm is\ nonconforming},\\
V_{h_{k+1}}, &\ {\rm if}\ \mathcal{T}_{h_{k,*}}\ {\rm is\ conforming}.
\end{array}
\right.
\end{equation*}
In our analysis, we also need the following result (see, e.g.,
\cite{DaiXuZhou,CasconKreuzerNochettoSiebert,Nochetto,Stevenson_2007,Stevson_2008}).
\begin{lemma}\label{Number_Up_Bound}
 (Complexity of {\bf REFINE}) Let $\mathcal{T}_{h_k}$ $(k\geq 0)$ be a sequence of
conforming nested partitions generated by {\bf REFINE} starting from $\mathcal{T}_{h_0}$, $\mathcal{M}(T_{h_{k,*}})$ the
set of elements of $\mathcal{T}_{h_k}$ which is marked for refinement and $\mathcal{T}_{h_{k,*}}$ be the partition created
by refinement of elements only in $\mathcal{M}(T_{h_{k,*}})$. There exists a constant $\hat{C}_0$ depending solely
on $\mathcal{T}_{h_0}$  such that
\begin{eqnarray}\label{Refinement_bounded}
\#\mathcal{T}_{h_{k+1}}- \#\mathcal{T}_{h_{0}}\leq \hat{C}_0\sum\limits_{i=0}^k\big(\#T_{h_{i,*}}-\#T_{h_{i}}\big).
\end{eqnarray}
Here and hereafter in this paper, we use $\#\mathcal{T}$ to denote the number of elements in the mesh $\mathcal{T}$.
\end{lemma}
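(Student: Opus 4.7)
The plan is to apply the charging argument of Binev--Dahmen--DeVore \cite{BinevDahmenDeVore} (in two dimensions) and Stevenson \cite{Stevson_2008} (in arbitrary dimension), whose conclusion is precisely the stated inequality for newest-vertex bisection with conformity closure. Since Lemma \ref{Number_Up_Bound} concerns \textbf{REFINE} alone and is independent of the eigenvalue or source problems under study, the proof reduces directly to that established machinery; I only outline the three conceptual steps. First, I would encode all elements produced across the sequence $\{\mathcal{T}_{h_i}\}_{i\geq 0}$ as the nodes of a master forest whose roots are the elements of $\mathcal{T}_{h_0}$, and assign to each non-root $T$ a generation $g(T)$ equal to the number of bisections separating it from its root ancestor. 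The structural property of newest-vertex bisection on an admissibly labeled $\mathcal{T}_{h_0}$ is that refining a single element $T$ can trigger, via the conformity closure, only the refinement of a bounded number of elements of generation at most $g(T)$, and the geometric reach of that induced refinement decays geometrically with the generation gap.

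Second, for each iteration $i\leq k$ and each closure-induced element $T\in\mathcal{T}_{h_{i,*}}\setminus\mathcal{T}_{h_i}$ that does not itself lie in $\mathcal{M}(\mathcal{T}_{h_{i,*}})$, I would define a charge on some $T_*\in \mathcal{M}(\mathcal{T}_{h_{i,*}})$ by tracing back the propagation chain that forced $T$ to be refined. The central combinatorial lemma states that, summed over all $i\leq k$, the total charge accumulated on any fixed element of the master forest is bounded by a constant depending only on $d$ and the initial labeling. This yields
\begin{eqnarray*}
\#\mathcal{T}_{h_{k+1}}-\#\mathcal{T}_{h_0}\;\leq\; \hat{C}_0\sum_{i=0}^{k}\#\mathcal{M}(\mathcal{T}_{h_{i,*}})\;=\;\hat{C}_0\sum_{i=0}^{k}\bigl(\#\mathcal{T}_{h_{i,*}}-\#\mathcal{T}_{h_i}\bigr),
\end{eqnarray*}
with $\hat{C}_0$ depending solely on $\mathcal{T}_{h_0}$.

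The hard part is the combinatorial lemma controlling cumulative charge. Its proof is delicate: one groups closure-induced elements by their generation gap to the marked ancestor, bounds the number of such pairs via the geometric decay of the reach, and invokes the labeling condition on $\mathcal{T}_{h_0}$ --- an ingredient which in $d\geq 3$ itself requires the construction of a compatible initial labeling, a nontrivial step carried out in \cite{Stevson_2008}. Since reproducing this purely combinatorial analysis would add nothing new to the present paper, I would quote the final estimate directly from \cite{BinevDahmenDeVore,Stevenson_2007,Stevson_2008,CasconKreuzerNochettoSiebert} and move on.
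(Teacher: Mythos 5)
Your proposal is correct and follows essentially the same route as the paper: the paper states Lemma \ref{Number_Up_Bound} without proof, citing it as a known result from \cite{DaiXuZhou,CasconKreuzerNochettoSiebert,Nochetto,Stevenson_2007,Stevson_2008}, and you likewise recognize that this is a standalone combinatorial fact about newest-vertex bisection with conformity closure and ultimately defer to \cite{BinevDahmenDeVore,Stevenson_2007,Stevson_2008,CasconKreuzerNochettoSiebert}. The extra value in your write-up is the sketch of the Binev--Dahmen--DeVore/Stevenson charging argument (master forest, generation-weighted charges, geometric decay of closure reach, cumulative-charge bound) and the explicit remark that the compatible initial labeling of $\mathcal{T}_{h_0}$ is what allows the argument in $d\ge 3$; none of this contradicts the paper, it merely makes the provenance of $\hat{C}_0$ transparent.
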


In order to analyze the complexity of {\bf Adaptive Algorithm $C$}, we first review some results related to the analysis of
complexity for the boundary value problem (\ref{Source_Problem_Weak}). For the proofs, please read the papers
\cite{CasconKreuzerNochettoSiebert} and \cite{DaiXuZhou}.

\begin{lemma}(\cite{CasconKreuzerNochettoSiebert})\label{Error_reduction_Refinement_Lemma}
 Let $R_{h_k}u\in V_{h_k}$ and $R_{h_{k,*}}u\in V_{h_{k,*}}$ be the discrete solutions of (\ref{Source_Problem_Weak}) on the meshes
 $\mathcal{T}_{h_k}$ and its refinement $\mathcal{T}_{h_{k,*}}$ with marked element $\mathcal{M}(\mathcal{T}_{h_{k,*}})$.
Then we have
\begin{eqnarray}
 \|R_{h_k}u-R_{h_{k,*}}u\|^2_{a,\Omega}\leq \tilde{C}_1\sum\limits_{T\in\mathcal{M}(\mathcal{T}_{h_{k,*}})}\tilde{\eta}^2_{h_k}(R_{h_k}u,T).
\end{eqnarray}
\end{lemma}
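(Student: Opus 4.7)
The plan is to run the standard localized a posteriori upper bound argument on the discrete error $e := R_{h_{k,*}}u - R_{h_k}u \in V_{h_{k,*}}$, exploiting Galerkin orthogonality against a Scott--Zhang-type quasi-interpolant tailored to vanish on all unrefined elements, so that only marked-element contributions survive.

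First, since $\mathcal{T}_{h_{k,*}}$ refines $\mathcal{T}_{h_k}$ we have $V_{h_k}\subset V_{h_{k,*}}$, and subtracting the two Galerkin equations defining $R_{h_k}u$ and $R_{h_{k,*}}u$ (see (\ref{projection})) yields $a(e,v_h)=0$ for all $v_h\in V_{h_k}$. Consequently
\begin{eqnarray*}
\|e\|_{a,\Omega}^2 = a(e,e) = a(u-R_{h_k}u, e) = a(u-R_{h_k}u, e-\Pi_{h_k}e)
\end{eqnarray*}
for any $\Pi_{h_k}:V_{h_{k,*}}\to V_{h_k}$. I would take $\Pi_{h_k}$ to be a Scott--Zhang-type quasi-interpolant with the localization property $(e-\Pi_{h_k}e)|_T\equiv 0$ for every $T\in \mathcal{T}_{h_k}\setminus\mathcal{M}(\mathcal{T}_{h_{k,*}})$, together with the standard local stability and approximation bounds $\|e-\Pi_{h_k}e\|_{0,T}\lesssim h_T\|e\|_{a,\omega_T}$ and $\|e-\Pi_{h_k}e\|_{0,E}\lesssim h_E^{1/2}\|e\|_{a,\omega_E}$. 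Such an operator exists because the local spaces underlying $V_{h_k}$ and $V_{h_{k,*}}$ coincide on unrefined elements, so $e$ itself already belongs to $V_{h_k}$ there and can be reproduced by $\Pi_{h_k}$.

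Next, elementwise integration by parts, together with $Lu=f$, rewrites the right-hand side as
\begin{eqnarray*}
\sum_{T\in \mathcal{T}_{h_k}}\int_T \tilde{\mathcal{R}}_T(R_{h_k}u)\,(e-\Pi_{h_k}e) + \sum_{E\in \mathcal{E}_{h_k}}\int_E \tilde{\mathcal{J}}_E(R_{h_k}u)\,(e-\Pi_{h_k}e),
\end{eqnarray*}
in which, by the localization of $\Pi_{h_k}$, only $T\in \mathcal{M}(\mathcal{T}_{h_{k,*}})$ and edges incident to such $T$ can contribute. Cauchy--Schwarz combined with the interpolation estimates and the finite-overlap property of the patches $\omega_T,\omega_E$ then yields
\begin{eqnarray*}
\|e\|_{a,\Omega}^2 \lesssim \Big(\sum_{T\in \mathcal{M}(\mathcal{T}_{h_{k,*}})}\tilde{\eta}^2_{h_k}(R_{h_k}u,T)\Big)^{1/2}\|e\|_{a,\Omega},
\end{eqnarray*}
and dividing by $\|e\|_{a,\Omega}$ gives the claim.

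The main obstacle is the bookkeeping in the interpolation step: every interior edge that separates a marked element from an unmarked one still contributes through the jump term, and it must be charged to the indicator of an adjacent marked element with a constant that depends only on the shape regularity $\gamma^*$ and on $C_a,\,c_a$. This is precisely the refinement that distinguishes the CKNS-style local upper bound from the global one appearing in (\ref{Upper_eta_Source_problem}); once that charging is carried out, the remainder of the argument is the classical residual--jump representation.
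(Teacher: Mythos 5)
Your proposal is correct and is essentially the localized upper bound (discrete reliability) argument from Cascon--Kreuzer--Nochetto--Siebert, which is the reference the paper cites for this lemma; the paper itself does not reproduce a proof, deferring explicitly to \cite{CasconKreuzerNochettoSiebert} and \cite{DaiXuZhou}. The two Galerkin orthogonalities, the quasi-interpolant $\Pi_{h_k}$ that leaves $e$ untouched on unrefined elements, the residual--jump representation, and the Cauchy--Schwarz / finite-overlap conclusion are exactly the CKNS mechanism, and you correctly flagged the only genuinely delicate piece of bookkeeping: jumps on edges separating a marked element from an unmarked neighbour must be charged to the indicator of the adjacent marked element, which is legitimate because every such edge lies on the boundary of some $T\in\mathcal{M}(\mathcal{T}_{h_{k,*}})$, and the term $h_E\|\tilde{\mathcal{J}}_E\|_{0,E}^2$ is already present in $\tilde{\eta}^2_{h_k}(R_{h_k}u,T)$. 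One point worth spelling out more carefully, because it is where a naive Scott--Zhang operator would fail, is that the localization property $(e-\Pi_{h_k}e)|_T\equiv 0$ on unrefined $T$ requires choosing the Scott--Zhang averaging simplex/face for every coarse node lying on the interface between refined and unrefined regions to be on the unrefined side; then, since $e\in V_{h_{k,*}}$ restricts to a degree-$m$ polynomial on each unrefined coarse element and Scott--Zhang reproduces such local polynomials, all nodal values of $\Pi_{h_k}e$ on an unrefined $T$ agree with those of $e$, so the two coincide on $T$. With that construction made explicit, the local stability bounds you quote are the standard ones, the sum of patch norms is controlled by $\|e\|_{a,\Omega}^2$ via shape regularity, and the proof closes as you state.
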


\begin{lemma}(\cite{CasconKreuzerNochettoSiebert,DaiXuZhou})\label{Error_estimate_Lower_Bound_Lemma}
 Under the same assumptions as in Lemma \ref{Error_reduction_Refinement_Lemma} and the energy decrease property
\begin{eqnarray}
&&\|u-R_{h_{k,*}}u\|^2_{a,\Omega}+\tilde{\gamma}_0{\rm osc}(f-L(R_{h_{k,*}}u))^2\nonumber\\
&& \ \ \ \ \ \leq \tilde{\xi}^2_0(\|u-R_{h_k}u\|^2_{a,\Omega}+\tilde{\gamma}_0 {\rm osc}(f-L(R_{h_k}u))^2)
\end{eqnarray}
with $\tilde{\gamma}_0>0$ and $\tilde{\xi}_0^2\in(0,\frac{1}{2})$. Then the set $\mathcal{M}(\mathcal{T}_{h_{k,*}})$ of marked elements
satisfy the D\"{o}rfler property
\begin{eqnarray}
\sum\limits_{T\in\mathcal{M}(\mathcal{T}_{h_{k,*}})}\tilde{\eta}^2_{h_k}(R_{h_k}u,T)\geq \tilde{\theta}\sum\limits_{T\in\mathcal{T}_{h_k}}\tilde{\eta}^2_{h_k}(R_{h_k}u,T),
\end{eqnarray}
where $\tilde\theta=\frac{\tilde{C}_2^2(1-2\tilde{\xi}_0^2)}{\tilde{C}_0(\tilde{C}_1^2+(1+2C_L^2\tilde{C}_1^2)\tilde{\gamma}_0)}$ with  $\tilde{C}_0=\max\{1,\frac{\tilde{C}_3^2}{\tilde{\gamma}_0}\}$.
\end{lemma}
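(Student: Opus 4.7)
The plan is the standard Cascon--Kreuzer--Nochetto--Siebert ``reverse'' argument: starting from the assumed energy decrease, I combine Galerkin orthogonality, Lemma~\ref{Error_reduction_Refinement_Lemma}, the efficiency bound~(\ref{Lower_Bound_Global_2}), and the $H^1$-control~(\ref{Osc_Bounded}) of the oscillation operator from Lemma~\ref{Osc_Bounded_Lemma} to recover the D\"orfler property with the claimed $\tilde{\theta}$. For brevity let $e_k := \|u-R_{h_k}u\|_{a,\Omega}$, $e_{k,*} := \|u-R_{h_{k,*}}u\|_{a,\Omega}$, $d_* := \|R_{h_k}u - R_{h_{k,*}}u\|_{a,\Omega}$, $o_k^2 := {\rm osc}(f-L(R_{h_k}u),\mathcal{T}_{h_k})^2$, $o_{k,*}^2 := {\rm osc}(f-L(R_{h_{k,*}}u),\mathcal{T}_{h_{k,*}})^2$, and $\mu_\star^2 := \sum_{T\in\mathcal{M}(\mathcal{T}_{h_{k,*}})}\tilde{\eta}_{h_k}^2(R_{h_k}u,T)$.

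First I exploit Galerkin orthogonality for the source problem, $e_{k,*}^2 = e_k^2 - d_*^2$. Substituting into the hypothesis gives $(1-\tilde{\xi}_0^2) e_k^2 \leq d_*^2 + \tilde{\xi}_0^2\tilde{\gamma}_0 o_k^2 - \tilde{\gamma}_0 o_{k,*}^2$; using $1-2\tilde{\xi}_0^2 \leq 1-\tilde{\xi}_0^2 \leq 1$ and adding $(1-2\tilde{\xi}_0^2)\tilde{\gamma}_0 o_k^2$ to both sides yields
\begin{equation*}
(1-2\tilde{\xi}_0^2)\bigl(e_k^2 + \tilde{\gamma}_0 o_k^2\bigr) \leq d_*^2 + \tilde{\gamma}_0 (o_k^2 - o_{k,*}^2).
\end{equation*}
Next I control the right-hand side by $\mu_\star^2$. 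Lemma~\ref{Error_reduction_Refinement_Lemma} immediately delivers $d_*^2 \leq \tilde{C}_1^2 \mu_\star^2$. For the oscillation gap, split the element sums defining $o_k^2$ and $o_{k,*}^2$ into contributions from refined elements $T \in \mathcal{M}(\mathcal{T}_{h_{k,*}})$ and unrefined elements. On each refined element the contribution to $o_k^2$ is at most $\tilde{\eta}_{h_k}^2(R_{h_k}u,T)$ directly from the definition of the indicator, so its total is bounded by $\mu_\star^2$, while the corresponding nonnegative contribution to $o_{k,*}^2$ can be discarded. On each unrefined element, $h_T$ is preserved and the two contributions differ only through $L(R_{h_k}u - R_{h_{k,*}}u)|_T$; a combination of~(\ref{Osc_Sum_inequality}), Young's inequality, and Lemma~\ref{Osc_Bounded_Lemma} applied on $\mathcal{T}_{h_{k,*}}$ (on which $R_{h_k}u - R_{h_{k,*}}u$ lives) bounds this remainder by $2C_L^2 d_*^2$. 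Altogether $o_k^2 - o_{k,*}^2 \leq \mu_\star^2 + 2C_L^2 d_*^2 \leq (1 + 2C_L^2 \tilde{C}_1^2)\mu_\star^2$.

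Combining these two estimates produces
\begin{equation*}
(1-2\tilde{\xi}_0^2)\bigl(e_k^2 + \tilde{\gamma}_0 o_k^2\bigr) \leq \bigl(\tilde{C}_1^2 + (1+2C_L^2\tilde{C}_1^2)\tilde{\gamma}_0\bigr)\mu_\star^2,
\end{equation*}
and a final application of~(\ref{Lower_Bound_Global_2}) in the form $\tilde{C}_2^2 \tilde{\eta}_{h_k}^2(R_{h_k}u,\Omega) \leq e_k^2 + \tilde{C}_3^2 o_k^2 \leq \tilde{C}_0\bigl(e_k^2 + \tilde{\gamma}_0 o_k^2\bigr)$, with $\tilde{C}_0 = \max\{1,\tilde{C}_3^2/\tilde{\gamma}_0\}$, converts the left-hand side into a multiple of $\tilde{\eta}_{h_k}^2(R_{h_k}u,\Omega)$, yielding exactly the D\"orfler inequality with the claimed $\tilde{\theta}$. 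The main obstacle is the oscillation-gap estimate: because mesh refinement and the function change $R_{h_k}u \to R_{h_{k,*}}u$ occur simultaneously, one must cleanly separate the ``pure mesh'' contribution (captured by the marked-element indicators) from the ``pure function'' contribution (captured by $d_*^2$ via the $H^1$-boundedness of the oscillation of $L$ applied to a finite-element function), and the resulting additive structure is precisely what gives rise to the denominator $\tilde{C}_1^2 + (1 + 2C_L^2\tilde{C}_1^2)\tilde{\gamma}_0$ in $\tilde{\theta}$.
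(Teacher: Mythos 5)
The paper does not prove this lemma --- it quotes it from \cite{CasconKreuzerNochettoSiebert,DaiXuZhou} and refers the reader there. Your reconstruction follows the standard CKNS route (Galerkin orthogonality, the contraction hypothesis, the localized upper bound, and $H^1$-control of the oscillation of $L$ applied to a discrete function), and the early algebra is fine: starting from $e_{k,*}^2 = e_k^2 - d_*^2$ and the hypothesis, adding $(1-2\tilde{\xi}_0^2)\tilde{\gamma}_0 o_k^2$ and dropping a nonnegative term does give
\begin{equation*}
(1-2\tilde{\xi}_0^2)\bigl(e_k^2 + \tilde{\gamma}_0 o_k^2\bigr) \leq d_*^2 + \tilde{\gamma}_0 \bigl(o_k^2 - o_{k,*}^2\bigr),
\end{equation*}
and the closing conversion $\tilde{C}_2^2\tilde{\eta}_{h_k}^2(R_{h_k}u,\Omega)\leq \tilde{C}_0(e_k^2+\tilde{\gamma}_0 o_k^2)$ is correct. (Your use of $\tilde{C}_1^2$ rather than the paper's $\tilde{C}_1$ in the localized upper bound is the consistent choice against~(\ref{Upper_eta_Source_problem}).)

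The step that is not justified is the oscillation-gap bound
\begin{equation*}
o_k^2 - o_{k,*}^2 \leq \mu_\star^2 + 2C_L^2 d_*^2.
\end{equation*}
Splitting over refined and unrefined elements and bounding the refined part by $\mu_\star^2$ is fine. But on the unrefined elements, the tools you invoke --- the triangle inequality~(\ref{Osc_Sum_inequality}), Young's inequality, and Lemma~\ref{Osc_Bounded_Lemma} --- deliver
\begin{equation*}
\sum_{T\notin\mathcal{M}}{\rm osc}^2_T(R_{h_k}u) \leq (1+\epsilon)\sum_{T\notin\mathcal{M}}{\rm osc}^2_T(R_{h_{k,*}}u) + (1+\epsilon^{-1})C_L^2 d_*^2
\end{equation*}
for any $\epsilon>0$, hence $o_k^2 - (1+\epsilon)o_{k,*}^2 \leq \mu_\star^2 + (1+\epsilon^{-1})C_L^2 d_*^2$. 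To get coefficient $1$ on $o_{k,*}^2$ one needs $\epsilon\to 0$, which blows up the $C_L^2 d_*^2$ coefficient; with $\epsilon=1$ one only gets $o_k^2 - 2 o_{k,*}^2 \leq \mu_\star^2 + 2C_L^2 d_*^2$. That extra unit of $o_{k,*}^2$ cannot simply be discarded from the inequality you derived, since it appears with the wrong sign. The way the standard argument avoids this is to not drop $\tilde{\gamma}_0 o_{k,*}^2$ early: keep the inequality $(1-\tilde{\xi}_0^2)e_k^2 + \tilde{\gamma}_0 o_{k,*}^2 \leq d_*^2 + \tilde{\xi}_0^2\tilde{\gamma}_0 o_k^2$, multiply the (factor-$2$) oscillation bound by $\tilde{\gamma}_0/2$, add, and cancel the $\tilde{\gamma}_0 o_{k,*}^2$ terms exactly. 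That path is what produces the factor $1-2\tilde{\xi}_0^2$ in a principled way, and when one then inserts $d_*^2\leq \tilde{C}_1^2\mu_\star^2$ the resulting denominator carries $2\tilde{C}_1^2$ rather than $\tilde{C}_1^2$. In other words, your route has a gap, and the cleaned-up route delivers a constant slightly different from the one printed in the lemma; the qualitative D\"orfler property is of course recovered either way, and the exact constant here is inherited from the cited references.
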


As in the normal analysis of AFEM for boundary value problems,
we introduce a function approximation class as follows
$$\mathcal{A}_{\gamma}^s:=\big\{v\in H_0^1(\Omega):|v|_{s,\gamma}<\infty\big\},$$
 where $\gamma>0$ is a constant and
 $$|v|_{s,\gamma}=\sup\limits_{\varepsilon>0}\varepsilon\inf\limits_{\{\mathcal{T}\subset\mathcal{T}_{h_0}:
\inf(\|v-v_{\mathcal{T}}\|^2_1+(\gamma+1)\text{osc}(L(v_{\mathcal{T}},\mathcal{T}))^2)^{1/2}\leq \varepsilon\}}(\#\mathcal{T}-\#\mathcal{T}_{h_0})^s$$
and $\mathcal{T}\subset \mathcal{T}_{h_0}$ means $\mathcal{T}$ is a refinement of $\mathcal{T}_{h_0}$.
From the definition, for $\gamma>0$, we see that $\mathcal{A}_{\gamma}^s=\mathcal{A}_{1}^s$ and we denote
$\mathcal{A}^s$ as $\mathcal{A}_1^s$, $|v|_s$ as $|v|_{s,\gamma}$ for simplicity. Hence the symbol
$\mathcal{A}^s$ is the class of functions that can be approximated within a
given tolerance $\varepsilon$ by continuous piecewise polynomial functions over a partition $\mathcal{T}$
with the number of degrees of freedom $\#\mathcal{T}-\#\mathcal{T}_{h_0}\lesssim  \varepsilon^{-1/s}|v|_s^{1/s}$.

In order to give the proof of optimal complexity of {\bf Adaptive Algorithm $C$} for the eigenvalue problem
(\ref{eigenproblem}), we should give some preparations. Associated with the eigenpair approximation $(\lambda_{h_k},u_{h_k})$
of (\ref{diseig}) in the mesh $\mathcal{T}_{h_k}$, we define $w^{h_k} = K(\lambda_{h_k} u_{h_k})$ as in (\ref{w_h_k_defintion}).

Using the assumption (\ref{Assumption_h_0}) and the similar procedure as in the proof of Theorem
\ref{Convergence_u_h_Theorem} when (\ref{eta_u_h_R_h_w}) is replaced by (\ref{osc_u_h}), we have
\begin{lemma}\label{Contraction_R_h_k_*_Lemma}
Let $(\lambda_{h_k},u_{h_k})\in\mathcal{R}\times V_{h_k}$ and $(\lambda_{h_{k,*}},u_{h_{k,*}})\in\mathcal{R}\times V_{h_{k,*}}$ be the discrete
solutions of (\ref{eigenproblem}) produced by {\bf Adaptive Algorithm $C$} over a conforming mesh $\mathcal{T}_{h_k}$ and its (nonconforming) refinement $\mathcal{T}_{h_{k,*}}$ with marked
element $\mathcal{M}(\mathcal{T}_{h_{k,*}})$. Supposing they satisfy the following property
\begin{eqnarray}
&&\|u-u_{h_{k,*}}\|_{a,\Omega}^2 +\gamma_* {\rm osc}(L(u_{h_{k,*}}),\mathcal{T}_{h_{k,*}})^2 \nonumber\\
&&\ \ \ \ \leq\beta_*^2(\|u-u_{h_k}\|_{a,\Omega}^2+\gamma_*{\rm osc}(L(u_{h_k}),\mathcal{T}_{h_k})^2),
\end{eqnarray}
where $\gamma_*>0$, $\beta_*>0$ are some constants. Then the associated boundary value problem
approximations $R_{h_k}w^{h_k}$ and $R_{h_{k,*}}w^{h_k}$ of $w^{h_k}$ have the following contraction property
\begin{eqnarray}\label{Contraction_R_h_k_*}
&&\|w^{h_k}-R_{h_{k,*}}w^{h_k}\|_{a,\Omega}^2 + \gamma_*{\rm osc}(L(R_{h_{k,*}}w^{h_k}),\mathcal{T}_{h_{k,*}})^2\nonumber\\
&&\ \ \ \ \leq \tilde{\beta}_*^2(\|w^{h_k}-R_{h_k}w^{h_k}\|_{a,\Omega}^2+\gamma_*{\rm osc}(L(u_{h_k}),\mathcal{T}_{h_k})^2)
\end{eqnarray}
with
\begin{eqnarray}\label{Def_tilde_Beta_*}
 \tilde{\beta}_*:=\left( \frac{(1+\delta_1+C_4\delta_1^{-1}r^2(h_0))\beta_*^2+C_4\delta_1^{-1}r^2(h_0)}{1-C_4r(h_0)(1+r(h_0))} \right)^{1/2},
\end{eqnarray}
where the constant $C_4$ depends on $\delta_1\in (0,1)$and $\gamma_*$
as in the proof of Theorem \ref{Convergence_u_h_Theorem}.
\end{lemma}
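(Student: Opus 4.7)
The plan is to translate the hypothesized contraction for the eigenvalue approximation $(u_{h_k},u_{h_{k,*}})$ into the claimed contraction for the associated BVP approximations $(R_{h_k}w^{h_k}, R_{h_{k,*}}w^{h_k})$ of $w^{h_k}=K(\lambda_{h_k}u_{h_k})$. The translation is pointwise in the iteration: on each side of the inequality I exchange an eigenvalue quantity for its BVP surrogate up to an $\mathcal{O}(r(h_0))$ correction, using Lemma \ref{u_u_h_convergence_Lemma}, Theorem \ref{trans}, and the oscillation bounds (\ref{Osc_Sum_inequality})--(\ref{Osc_Bounded}), and then apply Young's inequality with the same parameter $\delta_1\in(0,1)$ that appears in the proof of Theorem \ref{Convergence_u_h_Theorem}, so that the constants align with formula (\ref{Def_tilde_Beta_*}).

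First, I would treat the refined level. Setting $H=h_k$, $h=h_{k,*}$ in Lemma \ref{u_u_h_convergence_Lemma} and using (\ref{u-u_h_H_Space})--(\ref{osc_u_h}) together with Young's inequality, I obtain lower bounds of the form
\begin{align*}
\|u-u_{h_{k,*}}\|_{a,\Omega}^2 &\geq (1-\delta_1)\|w^{h_k}-R_{h_{k,*}}w^{h_k}\|_{a,\Omega}^2 - C_4\delta_1^{-1}r^2(h_0)\bigl(\|u-u_{h_k}\|_{a,\Omega}^2+\|u-u_{h_{k,*}}\|_{a,\Omega}^2\bigr),\\
{\rm osc}(L(u_{h_{k,*}}),\mathcal{T}_{h_{k,*}})^2 &\geq (1-\delta_1){\rm osc}(L(R_{h_{k,*}}w^{h_k}),\mathcal{T}_{h_{k,*}})^2 - C_4\delta_1^{-1}r^2(h_0)\bigl(\|u-u_{h_k}\|_{a,\Omega}^2+\|u-u_{h_{k,*}}\|_{a,\Omega}^2\bigr).
\end{align*}
Second, on the right-hand side of the hypothesis I convert $u_{h_k}$ similarly. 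Theorem \ref{trans} with $H=h_{k-1},h=h_k$ gives the upper bound on $\|u-u_{h_k}\|_{a,\Omega}^2$ in terms of $\|w^{h_k}-R_{h_k}w^{h_k}\|_{a,\Omega}^2$; for the oscillation I use $L(u_{h_k})=L(R_{h_k}w^{h_k})+L(u_{h_k}-R_{h_k}w^{h_k})$ with (\ref{Osc_Sum_inequality})--(\ref{Osc_Bounded}), after bounding $\|u_{h_k}-R_{h_k}w^{h_k}\|_{a,\Omega}\lesssim r(h_0)(\|u-u_{h_k}\|+\|u-u_{h_{k-1}}\|)$ in the same manner as (\ref{u_tilde_u_h}). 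The assumption (\ref{Assumption_h_0}) is then invoked to absorb the $\|u-u_{h_{k-1}}\|_{a,\Omega}^2$ remainder into $\|u-u_{h_k}\|_{a,\Omega}^2$.

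Third, I substitute these two conversions into the hypothesized contraction
\[
\|u-u_{h_{k,*}}\|_{a,\Omega}^2+\gamma_*{\rm osc}(L(u_{h_{k,*}}),\mathcal{T}_{h_{k,*}})^2\leq\beta_*^2\bigl(\|u-u_{h_k}\|_{a,\Omega}^2+\gamma_*{\rm osc}(L(u_{h_k}),\mathcal{T}_{h_k})^2\bigr),
\]
collect $\|w^{h_k}-R_{h_{k,*}}w^{h_k}\|^2$ and $\gamma_*\,{\rm osc}(L(R_{h_{k,*}}w^{h_k}))^2$ on the left and the analogous BVP quantities on $\mathcal{T}_{h_k}$ on the right, and push the $\mathcal{O}(r^2(h_0))$ cross-terms in $\|u-u_{h_k}\|_{a,\Omega}^2$ and $\|u-u_{h_{k,*}}\|_{a,\Omega}^2$ back to the corresponding BVP quantities (a second application of the conversion, which produces the $r^2(h_0)(1+r(h_0))$ factor) and to the right-hand side of the contraction. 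Dividing by the resulting factor $1-C_4r(h_0)(1+r(h_0))$ yields (\ref{Contraction_R_h_k_*}) with $\tilde{\beta}_*$ given by (\ref{Def_tilde_Beta_*}).

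The main obstacle is purely bookkeeping: each application of Young's inequality contributes a $(1+\delta_1)$ factor to the $\beta_*^2$ term and a $C_4\delta_1^{-1}r^2(h_0)$ correction; one must apply Young's inequality exactly once with the same $\delta_1$ used in Theorem \ref{Convergence_u_h_Theorem} so that the additive remainder aggregates into the single $C_4\delta_1^{-1}r^2(h_0)$ in the numerator of (\ref{Def_tilde_Beta_*}), and one must apply it a second time to produce the denominator factor $1-C_4r(h_0)(1+r(h_0))$. Care is required because the oscillation and energy corrections share the same $r(h_0)$ prefactor, so they can be grouped under a single constant $C_4$ depending on $\gamma_*$, the equation coefficients, $C_L,C_R$, $C_a$ and $c_a$; no new analytical input is needed beyond the relations already established in Section 4.
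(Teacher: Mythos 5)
Your proof follows the same three-step route as the paper: exchange the eigenvalue quantities on both sides of the hypothesized contraction for their BVP surrogates via Lemma~\ref{u_u_h_convergence_Lemma}, (\ref{Osc_Bounded}), and Young's inequality, absorb the lagging $\|u-u_{h_{k-1}}\|_{a,\Omega}^2$ term via assumption~(\ref{Assumption_h_0}), and divide out the resulting factor $1-C_4r(h_0)(1+r(h_0))$. The only cosmetic difference is that at the refined level you apply Young's inequality as a lower bound with factor $(1-\delta_1)$, whereas the paper works directly with the upper bound $\|w^{h_k}-R_{h_{k,*}}w^{h_k}\|_{a,\Omega}^2\le(1+\delta_1)\|u-u_{h_{k,*}}\|_{a,\Omega}^2+C_4\delta_1^{-1}r^2(h_0)(\cdots)$, which is what produces the precise numerator $(1+\delta_1+C_4\delta_1^{-1}r^2(h_0))\beta_*^2+C_4\delta_1^{-1}r^2(h_0)$ in~(\ref{Def_tilde_Beta_*}); your route yields a harmless $1/(1-\delta_1)$ in place of $(1+\delta_1)$, a different but equally valid explicit constant.
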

\begin{proof}
From (\ref{Osc_Bounded}), (\ref{u-u_h_H_Space}), and (\ref{osc_u_h}), there exists a constant $\hat{C}>0$ such that
\begin{eqnarray}\label{Lemma_5_4_inequality_1}
&&\|w^{h_k}-R_{h_{k,*}}w^{h_k}\|_{a,\Omega}^2+\gamma_*{\rm osc}(L(R_{h_{k,*}}w^{h_k}),\mathcal{T}_{h_{k,*}})^2 \nonumber\\
&\leq&(1+\delta_1)\|u-u_{h_{k,*}}\|_{a,\Omega}^2+\hat{C}(1+\delta_1^{-1})r^2(h_0)\big(\|u-u_{h_{k,*}}
\|_{a,\Omega}^2+\|u-u_{h_k}\|_{a,\Omega}^2\big)\nonumber\\
&& +(1+\delta_1)\gamma_*{\rm osc}(L(u_{h_{k,*}}),\mathcal{T}_{h_{k,*}})^2+\hat{C}(1+\delta_1^{-1})r^2(h_0)(\|u-u_{h_{k,*}}
\|_{a,\Omega}^2\nonumber\\
&&\ \ \ \ \ \ \ \  +\|u-u_{h_k}\|_{a,\Omega}^2)\nonumber\\
&\leq& (1+\delta_1)\big(\|u-u_{h_{k,*}}\|_{a,\Omega}^2+\gamma_*{\rm osc}(L(u_{h_{k,*}}),\mathcal{T}_{h_k})^2\big)\nonumber\\
&&+C_4\delta_1^{-1}r^2(h_0)\big(\|u-u_{h_{k,*}}
\|_{a,\Omega}^2+\|u-u_{h_k}\|_{a,\Omega}^2\big)\nonumber\\
&\leq& (1+\delta_1)\beta_*^2\big(\|u-u_{h_k}\|_{a,\Omega}^2+\gamma_*{\rm osc}(L(u_{h_k}),\mathcal{T}_{h_k})^2\big)\nonumber\\
&&+C_4\delta_1^{-1}r^2(h_0)\beta_*^2\big(\|u-u_{h_k}\|_{a,\Omega}^2+\gamma_*{\rm osc}(L(u_{h_k}),\mathcal{T}_{h_k})^2\big)\nonumber\\
&&+C_4\delta_1^{-1}r^2(h_0)\|u-u_{h_k}\|_{a,\Omega}^2\nonumber\\
&\leq&\big((1+\delta_1+C_4\delta_1^{-1}r^2(h_0))\beta_*^2+C_4\delta_1^{-1}r^2(h_0)\big)
\Big(\|u-u_{h_k}\|_{a,\Omega}^2\nonumber\\
&&\ \ \ \ \ \ +\gamma_*{\rm osc}(L(u_{h_k}),\mathcal{T}_{h_k})^2\Big),
\end{eqnarray}
where $C_4$ depends on the constants $\hat{C}$ and $\gamma_*$ and the Young inequality is used.

Similarly from (\ref{Osc_Bounded}), (\ref{u-u_h_H_Space}), (\ref{osc_u_h}), and (\ref{Assumption_h_0}), we have
\begin{eqnarray*}
&&\|u-u_{h_k}\|_{a,\Omega}^2 +\gamma_*{\rm osc}(L(u_{h_k}),\mathcal{T}_{h_k})^2\nonumber\\
&\leq&\|w^{h_k}-R_{h_k}w^{h_k}\|_{a,\Omega}^2+\gamma_*{\rm osc}(L(R_{h_k}w^{h_k}),\mathcal{T}_{h_k})^2\nonumber\\
&&\ \ \ \ + C_4r^2(h_0)\big(\|u-u_{h_k}\|_{a,\Omega}^2+\|u-u_{h_{k-1}}\|_{a,\Omega}^2\big)\nonumber\\
&\leq&\|w^{h_k}-R_{h_k}w^{h_k}\|_{a,\Omega}^2+\gamma_*{\rm osc}(L(R_{h_k}w^{h_k}),\mathcal{T}_{h_k})^2\nonumber\\
&&\ \ \ \ + C_4r(h_0)(1+r(h_0))\|u-u_{h_k}\|_{a,\Omega}^2.
\end{eqnarray*}
Then the following inequality holds
\begin{eqnarray}\label{Lemma_5_4_inequality_2}
&&\|u-u_{h_k}\|_{a,\Omega}^2 +\gamma_*{\rm osc}(L(u_{h_k}),\mathcal{T}_{h_k})^2\nonumber\\
&\leq& \frac{1}{1-C_4r(h_0)(1+r(h_0))}\big(\|w^{h_k}-R_{h_k}w^{h_k}\|_{a,\Omega}^2\nonumber\\
&&\ \ \ \ +\gamma_*{\rm osc}(L(R_{h_k}w^{h_k}),\mathcal{T}_{h_k})^2\big).
\end{eqnarray}
Combining (\ref{Lemma_5_4_inequality_1}) and (\ref{Lemma_5_4_inequality_2}) leads to
\begin{eqnarray}
&&\|w^{h_k}-R_{h_{k,*}}w^{h_k}\|_{a,\Omega}^2+\gamma_*{\rm osc}(L(R_{h_{k,*}}w^{h_k}))^2 \nonumber\\
&\leq&\frac{(1+\delta_1+C_4\delta_1^{-1}r^2(h_0))\beta_*^2+C_4\delta_1^{-1}r^2(h_0)}{1-C_4r(h_0)(1+r(h_0))}
\big(\|w^{h_k}-R_{h_k}w^{h_k}\|_{a,\Omega}^2\nonumber\\
&&\ \ \ \ +\gamma_*{\rm osc}(L(R_{h_k}w^{h_k}),\mathcal{T}_{h_k})^2\big).
\end{eqnarray}
This is the desired result (\ref{Contraction_R_h_k_*}) and the proof is complete.
\end{proof}

We present the following statement which is a direct consequence of Lemmas \ref{Error_estimate_Lower_Bound_Lemma} and \ref{Contraction_R_h_k_*_Lemma}.
\begin{corollary}\label{Refine_Element_Estimate_Corollary}
Let $(\lambda_{h_k},u_{h_k})\in\mathcal{R}\times V_{h_k}$ and $(\lambda_{h_{k,*}},u_{h_{k,*}})\in\mathcal{R}\times V_{h_{k,*}}$  be as
in Lemma \ref{Contraction_R_h_k_*_Lemma}. Suppose they satisfy the following decrease property
\begin{eqnarray*}
\|u-u_{h_{k,*}}\|_{a,\Omega}^2 +\gamma_* {\rm osc}(L(u_{h_{k,*}}),\mathcal{T}_{h_{k,*}})^2
\leq\beta_*^2\big(\|u-u_{h_k}\|_{a,\Omega}^2+\gamma_*{\rm osc}(L(u_{h_k}),\mathcal{T}_{h_k})^2\big),
\end{eqnarray*}
where the constants $\gamma_*>0$ and $\beta_*^2\in (0, 1/2)$. Then the set $\mathcal{M}(\mathcal{T}_{h_{k,*}})$ of marked elements
satisfies the following inequality
\begin{eqnarray}
\sum\limits_{T\in\mathcal{M}(\mathcal{T}_{h_{k,*}})}\eta^2_{h_k}(u_{h_k},T)\geq \hat{\theta}\sum\limits_{T\in\mathcal{T}_{h_k}}\eta^2_{h_k}(u_{h_k},T),
\end{eqnarray}
where the constant $\hat{\theta}=\frac{\tilde{C}_2^2(1-2\tilde{\beta}_*^2)}{\tilde{C}_0(\tilde{C}_1^2+(1+2C_L^2\tilde{C}_1^2)\gamma_*)}$
and $\tilde{C}_0=\max\{1,\frac{\tilde{C}_3^2}{\gamma_*}\}$ with $\tilde{\beta}_*$ and $\gamma_*$
 which are the same as in (\ref{Contraction_R_h_k_*}) and (\ref{Def_tilde_Beta_*}) with $\delta_1$
 being chosen such that $\tilde{\beta}_*^2\in (0, 1/2)$.
\end{corollary}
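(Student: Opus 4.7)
The plan is to chain together Lemma \ref{Contraction_R_h_k_*_Lemma} and Lemma \ref{Error_estimate_Lower_Bound_Lemma}: first transport the assumed energy decrease from the eigenvalue side to the associated source-problem side, then apply the known Dörfler-type characterization for the source problem, and finally translate the resulting D\"orfler inequality from the source-problem indicator $\tilde{\eta}_{h_k}(R_{h_k}w^{h_k},T)$ to the eigenvalue indicator $\eta_{h_k}(u_{h_k},T)$.

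First I would apply Lemma \ref{Contraction_R_h_k_*_Lemma} directly to the hypothesis, which gives the contraction
\begin{eqnarray*}
&&\|w^{h_k}-R_{h_{k,*}}w^{h_k}\|_{a,\Omega}^2 + \gamma_*{\rm osc}(L(R_{h_{k,*}}w^{h_k}),\mathcal{T}_{h_{k,*}})^2\\
&&\quad \leq \tilde{\beta}_*^2\big(\|w^{h_k}-R_{h_k}w^{h_k}\|_{a,\Omega}^2 + \gamma_*{\rm osc}(L(u_{h_k}),\mathcal{T}_{h_k})^2\big),
\end{eqnarray*}
with $\tilde{\beta}_*$ from \eqref{Def_tilde_Beta_*}. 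Since $r(h_0)\ll 1$, I can choose $\delta_1\in(0,1)$ so that $\tilde{\beta}_*^2\in(0,1/2)$, which is exactly the threshold needed by Lemma \ref{Error_estimate_Lower_Bound_Lemma}. Because the right hand side $\lambda_{h_k}u_{h_k}$ of the source problem defining $w^{h_k}$ is piecewise polynomial on $\mathcal{T}_{h_k}$ (as $u_{h_k}\in V_{h_k}$), the oscillation of the datum vanishes and ${\rm osc}(L(u_{h_k}),\mathcal{T}_{h_k})$ can be compared with ${\rm osc}(L(R_{h_k}w^{h_k}),\mathcal{T}_{h_k})$ via \eqref{Osc_Sum_inequality}, \eqref{Osc_Bounded} and the closeness $\|u_{h_k}-R_{h_k}w^{h_k}\|_{a,\Omega}=\mathcal{O}(r(h_0))$ established through \eqref{u_tilde_u_h}, \eqref{w_h_H} and the crude assumption \eqref{Assumption_h_0}. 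This puts the previous contraction into the form required by Lemma \ref{Error_estimate_Lower_Bound_Lemma}.

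With that, I apply Lemma \ref{Error_estimate_Lower_Bound_Lemma} to the source problem $a(w^{h_k},v)=(\lambda_{h_k}u_{h_k},v)$, identifying $\tilde{\xi}_0=\tilde{\beta}_*$ and $\tilde{\gamma}_0=\gamma_*$. This immediately produces the D\"orfler-type bound
\begin{eqnarray*}
\sum_{T\in\mathcal{M}(\mathcal{T}_{h_{k,*}})}\tilde{\eta}_{h_k}^2(R_{h_k}w^{h_k},T) \geq \tilde{\theta}\sum_{T\in\mathcal{T}_{h_k}}\tilde{\eta}_{h_k}^2(R_{h_k}w^{h_k},T),
\end{eqnarray*}
with $\tilde{\theta}$ coinciding with the stated $\hat{\theta}$ under the substitutions above.

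The last step is to pass from the source-problem indicator $\tilde{\eta}_{h_k}(R_{h_k}w^{h_k},T)$ to the eigenvalue indicator $\eta_{h_k}(u_{h_k},T)$. The element and jump residuals of the two indicators differ exactly by $L(R_{h_k}w^{h_k}-u_{h_k})$ and $[[A\nabla(R_{h_k}w^{h_k}-u_{h_k})]]_E$; by \eqref{eta_sum_inequality}, \eqref{eta_H_1_Bounded} and the inverse and trace estimates used in the proof of Lemma \ref{Eta_Sum_Bounded_Lemma}, the discrepancy between the two indicators (both locally in the sum over any subset and globally over $\mathcal{T}_{h_k}$) is controlled by $\|R_{h_k}w^{h_k}-u_{h_k}\|_{a,\Omega}$. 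By the argument used in Theorem \ref{trans} together with \eqref{Assumption_h_0}, this quantity is of strictly higher order than $\|u-u_{h_k}\|_{a,\Omega}$, hence it can be absorbed, leaving the desired inequality with $\hat{\theta}$. The main obstacle is precisely this absorption: one must verify that the cross-terms coming from substituting $\eta$ for $\tilde{\eta}$ do not inflate the numerator of $\hat{\theta}$ beyond the closed form stated, which is where assumption \eqref{Assumption_h_0} on the initial mesh size is indispensable for bounding $r(h_0)\|u-u_{h_{k-1}}\|_{a,\Omega}$ by a fraction of $\|u-u_{h_k}\|_{a,\Omega}$.
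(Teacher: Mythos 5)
Your proposal implements precisely what the paper means by ``a direct consequence of Lemmas~\ref{Error_estimate_Lower_Bound_Lemma} and~\ref{Contraction_R_h_k_*_Lemma}'': transport the energy decrease to the associated source problem via Lemma~\ref{Contraction_R_h_k_*_Lemma}, invoke Lemma~\ref{Error_estimate_Lower_Bound_Lemma} with $\tilde{\xi}_0=\tilde{\beta}_*$ and $\tilde{\gamma}_0=\gamma_*$, and then translate the resulting D\"orfler inequality from the source indicator $\tilde{\eta}_{h_k}(R_{h_k}w^{h_k},\cdot)$ to the eigenvalue indicator $\eta_{h_k}(u_{h_k},\cdot)$. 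The paper provides no written proof for this corollary, so there is nothing more detailed to compare against, but your chain is the right one, and you correctly note that $\beta_*^2\in(0,1/2)$ together with $r(h_0)\ll1$ lets you pick $\delta_1$ so that $\tilde{\beta}_*^2<1/2$, the threshold Lemma~\ref{Error_estimate_Lower_Bound_Lemma} requires.

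Two steps need sharper bookkeeping than you give. First, the coarse-side oscillation output by Lemma~\ref{Contraction_R_h_k_*_Lemma} is ${\rm osc}(L(u_{h_k}),\mathcal{T}_{h_k})$, while Lemma~\ref{Error_estimate_Lower_Bound_Lemma} applied to $a(w^{h_k},v)=(\lambda_{h_k}u_{h_k},v)$ wants ${\rm osc}(\lambda_{h_k}u_{h_k}-L(R_{h_k}w^{h_k}),\mathcal{T}_{h_k})={\rm osc}(L(R_{h_k}w^{h_k}),\mathcal{T}_{h_k})$ once the piecewise-polynomial datum is used; these differ by at most $C_L\|u_{h_k}-R_{h_k}w^{h_k}\|_{a,\Omega}$, a small but nonzero perturbation. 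Second, and this is the obstacle you yourself flag, passing from $\tilde{\eta}_{h_k}(R_{h_k}w^{h_k},\cdot)$ to $\eta_{h_k}(u_{h_k},\cdot)$ changes both sides of the D\"orfler inequality by $C_R\|u_{h_k}-R_{h_k}w^{h_k}\|_{a,\Omega}$. Under (\ref{Assumption_h_0}) and the reliability bound this quantity is $\mathcal{O}(r(h_0)^{1/2})\,\eta_{h_k}(u_{h_k},\Omega)$, so the constant you can actually prove is $\hat\theta$ multiplied by $\bigl(1-\mathcal{O}(r(h_0)^{1/2})\bigr)$, not the literal $\hat\theta$ of the statement. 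This looseness is inherited from the paper; it is benign only because the subsequent Upper-Bound-of-DOF argument compares $\check\theta$ to $\theta$ with strict inequality under the hypothesis $\theta<\frac{C_2^2\gamma}{C_3^2(C_1^2+(1+2C_L^2C_1^2)\gamma)}$, which leaves room to absorb the degradation. So your route is substantively correct, but you should state the $\mathcal{O}(r(h_0))$ perturbation explicitly rather than asserting the exact $\hat\theta$, and you should note that the absorption is deferred downstream rather than completed inside this corollary.
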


\begin{lemma}(Upper Bound of DOF).
Let $u\in \mathcal{A}^s$ and $\mathcal{T}_{h_k}$ be a conforming partition from $\mathcal{T}_{h_0}$.
Let $\mathcal{T}_{h_{k,*}}$ be a mesh created from $\mathcal{T}_{h_k}$ by marking the set
$\mathcal{M}(\mathcal{T}_{h_{k,*}})$ according to {\bf Marking Strategy $E$} with $\theta\in (0, \frac{C_2^2\gamma}{C_3^2(C_1^2+(1+2C_L^2C_1^2)\gamma)})$. Then we have
\begin{eqnarray}\label{Number_T_h_estimate}
\#\mathcal{T}_{h_{k,*}}- \#\mathcal{T}_{h_{0}}\leq C\big(\|u-u_{h_k}\|_{a,\Omega}^2 +\gamma {\rm osc}(L(u_{h_n}),\mathcal{T}_{h_n})^2\big)^{-1/(2s)}|u|_s^{1/s},
\end{eqnarray}
where the constant $C$ depends on the discrepancy between $\theta$ and $\frac{C_2^2\gamma}{C_3^2(C_1^2+(1+2C_L^2C_1^2)\gamma)}$.
\end{lemma}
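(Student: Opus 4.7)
The plan is to adapt the classical complexity argument of Cascon--Kreuzer--Nochetto--Siebert and Dai--Xu--Zhou, using the approximation-class definition together with Corollary \ref{Refine_Element_Estimate_Corollary} to compare an ``optimal'' D\"orfler marking to the minimal marking produced by \textbf{Marking Strategy $E$}.

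First I would fix a constant $\beta_*^2\in(0,1/2)$ such that the quantity $\hat{\theta}$ from Corollary \ref{Refine_Element_Estimate_Corollary} (with parameter $\gamma_*=\gamma$) strictly exceeds $\theta$; this is possible because the upper limit on $\theta$ in the hypothesis equals the limit value of $\hat{\theta}$ as $\beta_*^2\to 1/2$ and as $\tilde{\beta}_*\to\beta_*$ with $r(h_0)\to 0$. Next I would pick a small parameter $\mu>0$ and set
\begin{eqnarray*}
\varepsilon^2 := \mu\,\bigl(\|u-u_{h_k}\|_{a,\Omega}^2+\gamma\,{\rm osc}(L(u_{h_k}),\mathcal{T}_{h_k})^2\bigr).
\end{eqnarray*}
Using $u\in\mathcal{A}^s$, choose a refinement $\mathcal{T}_\varepsilon$ of $\mathcal{T}_{h_0}$ and $v_\varepsilon\in V_{\mathcal{T}_\varepsilon}$ with $\|u-v_\varepsilon\|_1^2+(\gamma+1){\rm osc}(L(v_\varepsilon),\mathcal{T}_\varepsilon)^2\leq\varepsilon^2$ and $\#\mathcal{T}_\varepsilon-\#\mathcal{T}_{h_0}\lesssim\varepsilon^{-1/s}|u|_s^{1/s}$.

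Then I would form the overlay $\mathcal{T}_+:=\mathcal{T}_{h_k}\oplus\mathcal{T}_\varepsilon$ and let $(\lambda_{h_+},u_{h_+})$ be the eigenpair produced on $\mathcal{T}_+$ by one multilevel-correction step. Because $V_{\mathcal{T}_\varepsilon}\subset V_{\mathcal{T}_+}$, a Galerkin best-approximation argument applied to the associated boundary value projection $R_{h_+}w^{h_k}$ on $\mathcal{T}_+$, combined with (\ref{Osc_Sum_inequality}), (\ref{Osc_Bounded}) and the transfer inequalities (\ref{u-u_h_H_Space}), (\ref{osc_u_h}) of Lemma \ref{u_u_h_convergence_Lemma}, yields
\begin{eqnarray*}
\|u-u_{h_+}\|_{a,\Omega}^2+\gamma\,{\rm osc}(L(u_{h_+}),\mathcal{T}_+)^2 \leq \beta_*^2\bigl(\|u-u_{h_k}\|_{a,\Omega}^2+\gamma\,{\rm osc}(L(u_{h_k}),\mathcal{T}_{h_k})^2\bigr),
\end{eqnarray*}
provided $\mu$ and $h_0$ are small enough. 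Corollary \ref{Refine_Element_Estimate_Corollary}, applied to the pair $\mathcal{T}_{h_k}\to\mathcal{T}_+$, then shows that the set $\mathcal{R}\subset\mathcal{T}_{h_k}$ of elements actually refined in the overlay satisfies a D\"orfler inequality with parameter $\hat{\theta}>\theta$.

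Finally, since \textbf{Marking Strategy $E$} selects a \emph{minimal} subset achieving the D\"orfler inequality with parameter $\theta<\hat{\theta}$, one has $\#\mathcal{M}(\mathcal{T}_{h_{k,*}})\leq\#\mathcal{R}$, and the standard overlay property gives $\#\mathcal{R}\leq\#\mathcal{T}_\varepsilon-\#\mathcal{T}_{h_0}\lesssim\varepsilon^{-1/s}|u|_s^{1/s}$. Substituting the definition of $\varepsilon$ produces the bound in (\ref{Number_T_h_estimate}) on the number of newly refined elements; combining with Lemma \ref{Number_Up_Bound} accumulates across earlier refinement steps to control $\#\mathcal{T}_{h_{k,*}}-\#\mathcal{T}_{h_0}$. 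The main obstacle is the first displayed contraction on $\mathcal{T}_+$: unlike the pure boundary value problem, the eigenvalue Galerkin identity is polluted by $\mathcal{O}(r(h_0))$ higher-order terms (as in Theorem \ref{trans}), so one must route the estimate through the associated source-problem approximations $R_{h_k}w^{h_k}$ and $R_{h_+}w^{h_k}$ following the template of Lemma \ref{Contraction_R_h_k_*_Lemma}; this is bookkeeping of constants (the loss is absorbed by choosing $\mu$ and $h_0$ small) rather than a fundamentally new ingredient.
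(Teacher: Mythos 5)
Your roadmap is essentially the paper's: choose $\varepsilon$ proportional to $\big(\|u-u_{h_k}\|_{a,\Omega}^2+\gamma\,{\rm osc}(L(u_{h_k}),\mathcal{T}_{h_k})^2\big)^{1/2}$, use $u\in\mathcal{A}^s$ to extract a small mesh $\mathcal{T}_{h_\varepsilon}$, overlay it with $\mathcal{T}_{h_k}$ to form $\mathcal{T}_{h_{k,+}}$, establish a total-error contraction from $\mathcal{T}_{h_k}$ to $\mathcal{T}_{h_{k,+}}$ with factor below $1/2$, feed this into Corollary \ref{Refine_Element_Estimate_Corollary} to obtain a D\"orfler constant exceeding $\theta$, and close via minimality of \textbf{Marking Strategy $E$} together with the overlay cardinality bound $\#\mathcal{T}_{h_{k,+}}-\#\mathcal{T}_{h_k}\le\#\mathcal{T}_{h_\varepsilon}-\#\mathcal{T}_{h_0}$. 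Two points in your sketch deserve a second look. First, the pivot function the paper uses in the Galerkin-orthogonality step is $w^{h_\varepsilon}=K(\lambda_{h_\varepsilon}u_{h_\varepsilon})$, not $w^{h_k}$: the identity $\|w^{h_\varepsilon}-R_{h_{k,+}}w^{h_\varepsilon}\|_{a,\Omega}^2=\|w^{h_\varepsilon}-R_{h_\varepsilon}w^{h_\varepsilon}\|_{a,\Omega}^2-\|R_{h_{k,+}}w^{h_\varepsilon}-R_{h_\varepsilon}w^{h_\varepsilon}\|_{a,\Omega}^2$ exploits the nesting $V_{h_\varepsilon}\subset V_{h_{k,+}}$ directly, and the right-hand side is then tied to $\varepsilon$ by Lemma \ref{u_u_h_convergence_Lemma} applied on $\mathcal{T}_{h_\varepsilon}$; routing through $w^{h_k}$ can be repaired by inserting triangle inequalities against $u$ and the best approximation in $V_{h_\varepsilon}$, but this brings in additional $\mathcal{O}(r(h_0))$ contributions you would need to track, whereas the paper's choice keeps the bookkeeping minimal. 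Second, the invocation of Lemma \ref{Number_Up_Bound} does not belong in this lemma's proof: the argument terminates at the single-step count $\#\mathcal{T}_{h_{k,*}}-\#\mathcal{T}_{h_k}\lesssim\varepsilon^{-1/s}|u|_s^{1/s}$, and accumulating over $k$ via Lemma \ref{Number_Up_Bound} additionally requires the contraction (\ref{Convergence_u_h_k_u_h_k_1}); that summation is carried out only in the optimality theorem that follows.
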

\begin{proof}
We choose $\beta$, $\beta_1\in (0, 1)$ such that $\beta_1\in (0, \beta)$ and
\begin{eqnarray*}
\theta < \frac{C_2^2\gamma}{C_3^2\Big(C_1^2+(1+2C_L^2C_1^2)\gamma\Big)}(1-\beta^2).
\end{eqnarray*}
Let
\begin{eqnarray*}
\varepsilon = \frac{1}{\sqrt{2}}\beta_1\big(\|u-u_{h_k}\|_{a,\Omega}^2+\gamma{\rm osc}(L(u_{h_k}),\mathcal{T}_{h_k})^2\big)^{1/2}.
\end{eqnarray*}
Let $\delta_1\in (0,1)$ and $\delta_2\in (0,1)$ be the constants such that (\ref{Def_delta_1_delta_2}) and
\begin{eqnarray}\label{Condition_beta_1_delta_1_delta_2}
(1+\delta_1)^2(1+\delta_2)^2\beta_1^2 \leq \beta^2,
\end{eqnarray}
which implies
\begin{eqnarray}\label{Condition_beta_1}
(1+\delta_1)(1+\delta_2)\beta_1^2 <1.
\end{eqnarray}
Let $\mathcal{T}_{h_{\varepsilon}}$ be a refinement of $\mathcal{T}_{h_0}$ with minimum degrees of freedom satisfying
\begin{eqnarray}\label{Def_u_h_varepsilon}
\|u-u_{h_{\varepsilon}}\|_{a,\Omega}^2+(\gamma+1){\rm osc}(L(u_{h_{\varepsilon}}),\mathcal{T}_{h_{\varepsilon}})^2\leq \varepsilon^2,
\end{eqnarray}
where $u_{h_{\varepsilon}}$ denotes the solution of eigenvalue problem (\ref{diseig}) over the mesh $\mathcal{T}_{h_{\varepsilon}}$.
By the definition of $\mathcal{A}^s$, we can get that
\begin{eqnarray}
\hskip-0.5cm\#\mathcal{T}_{h_{\varepsilon}}-\#\mathcal{T}_{h_0}\leq \left(\frac{1}{\sqrt{2}}\beta_1\right)^{-1/s}\big(\|u-u_{h_k}\|_{a,\Omega}^2+\gamma{\rm osc} (L(u_{h_k}),\mathcal{T}_{h_k})^2\big)^{-1/(2s)}|u|_s^{1/s}.
\end{eqnarray}

Let $\mathcal{T}_{h_{k,+}}$ be the smallest (nonconforming) common refinement of $\mathcal{T}_{h_k}$ and $\mathcal{T}_{h_{\varepsilon}}$.
Since both $\mathcal{T}_{h_k}$ and $\mathcal{T}_{h_{\varepsilon}}$ are refinements of $\mathcal{T}_{h_0}$, the number of elements in
$\mathcal{T}_{h_{k,+}}$ that are not in $\mathcal{T}_{h_k}$ is less than the number of elements that must be added to go from $\mathcal{T}_{h_0}$
 to $\mathcal{T}_{h_{\varepsilon}}$,
namely,
\begin{eqnarray}
\#\mathcal{T}_{h_{k,+}}-\#\mathcal{T}_{h_k} \leq \#\mathcal{T}_{h_{\varepsilon}}- \#\mathcal{T}_{h_0}.
\end{eqnarray}
Let $w^{h_{\varepsilon}}=K(\lambda_{h_{\varepsilon}}u_{h_{\varepsilon}})$. From definition, we can easily get
\begin{eqnarray}
{\rm osc}(L(R_{h_{k,+}}w^{h_{\varepsilon}}),\mathcal{T}_{h_{k,+}})
&\leq& {\rm osc}(L(R_{h_{\varepsilon}}w^{h_{\varepsilon}}),\mathcal{T}_{h_{k,+}})\nonumber\\
&& +{\rm osc}(L(R_{h_{k,+}}w^{h_{\varepsilon}}-R_{h_{\varepsilon}}w^{h_{\varepsilon}}),\mathcal{T}_{h_{k,+}})\nonumber\\
&\leq&  {\rm osc}(L(R_{h_{\varepsilon}}w^{h_{\varepsilon}}),\mathcal{T}_{h_{k,+}}) \nonumber\\ && +C_L\|R_{h_{k,+}}w^{h_{\varepsilon}}-R_{h_{\varepsilon}}w^{h_{\varepsilon}}\|_{a,\Omega},
\end{eqnarray}
where Lemma \ref{Osc_Bounded_Lemma} is used. Then by the Young inequality, we have
\begin{eqnarray}
{\rm osc}(L(R_{h_{k,+}}w^{h_{\varepsilon}}),\mathcal{T}_{h_{k,+}})^2
&\leq& 2{\rm osc}(L(R_{h_{\varepsilon}}w^{h_{\varepsilon}}),\mathcal{T}_{h_{k,+}})^2 \nonumber\\ && +2C_L^2\|R_{h_{k,+}}w^{h_{\varepsilon}}-R_{h_{\varepsilon}}w^{h_{\varepsilon}}\|_{a,\Omega}^2.
\end{eqnarray}

Since $\mathcal{T}_{h_{k,+}}$ is a refinement of $\mathcal{T}_{h_{\varepsilon}}$, $L^2$-projection error are monotone
and the following orthogonality
\begin{eqnarray}
\|w^{h_{\varepsilon}}-R_{h_{k,+}}w^{h_{\varepsilon}}\|_{a,\Omega}^2 =
\|w^{h_{\varepsilon}}-R_{h_{\varepsilon}}w^{h_{\varepsilon}}\|_{a,\Omega}^2 -
\|R_{h_{k,+}}w^{h_{\varepsilon}}-R_{h_{\varepsilon}}w^{h_{\varepsilon}}\|_{a,\Omega}^2
\end{eqnarray}
is valid, we arrive at
\begin{eqnarray}
&&\|w^{h_{\varepsilon}}-R_{h_{k,+}}w^{h_{\varepsilon}}\|_{a,\Omega}^2 +\frac{1}{2C_L^2}{\rm osc}(L(R_{h_{k,+}}w^{h_{\varepsilon}}),\mathcal{T}_{h_{k,+}})^2\nonumber\\
&& \leq \|w^{h_{\varepsilon}}-R_{h_{\varepsilon}}w^{h_{\varepsilon}}\|_{a,\Omega}^2 +\frac{1}{C_L^2}{\rm osc}(L(R_{h_{\varepsilon}}w^{h_{\varepsilon}}),\mathcal{T}_{h_{\varepsilon}})^2.
\end{eqnarray}
Note that (\ref{Def_tilde_Gamma_source}) implies $\tilde{\gamma}\leq \frac{1}{2C_L^2}$ and we obtain that
\begin{eqnarray}
&&\|w^{h_{\varepsilon}}-R_{h_{k,+}}w^{h_{\varepsilon}}\|_{a,\Omega}^2 +\tilde{\gamma}{\rm osc}(L(R_{h_{k,+}}w^{h_{\varepsilon}}),\mathcal{T}_{h_{k,+}})^2\nonumber\\
&& \leq \|w^{h_{\varepsilon}}-R_{h_{\varepsilon}}w^{h_{\varepsilon}}\|_{a,\Omega}^2 +\frac{1}{C_L^2}{\rm osc}(L(R_{h_{\varepsilon}}w^{h_{\varepsilon}}),\mathcal{T}_{h_{\varepsilon}})^2\nonumber\\
&&\leq \|w^{h_{\varepsilon}}-R_{h_{\varepsilon}}w^{h_{\varepsilon}}\|_{a,\Omega}^2 + (\tilde{\gamma}+\sigma){\rm osc}(L(R_{h_{\varepsilon}}w^{h_{\varepsilon}}),\mathcal{T}_{h_{\varepsilon}})^2
\end{eqnarray}
with $\sigma=\frac{1}{C_L^2}-\tilde{\gamma}\in (0,1)$. Applying the similar argument in the proof of
Theorem \ref{Convergence_u_h_Theorem} when (\ref{eta_u_h_R_h_w}) is replaced by (\ref{osc_u_h}),
 we then obtain
\begin{eqnarray}\label{Def_u_h_k+_u_h_varepsilon}
&&\|u-u_{h_{k,+}}\|_{a,\Omega}^2 + \gamma{\rm osc}(L(u_{h_{k,+}}),\mathcal{T}_{h_{k,+}})\nonumber\\
&& \leq \beta_0^2\Big(\|u-u_{h_{\varepsilon}}\|_{a,\Omega}^2 + (\gamma+\sigma){\rm osc}(L(u_{h_{\varepsilon}}),\mathcal{T}_{h_{\varepsilon}})^2\Big)\nonumber\\
&&\leq \beta_0^2\Big(\|u-u_{h_{\varepsilon}}\|_{a,\Omega}^2 +
(\gamma+1){\rm osc}(L(u_{h_{\varepsilon}}),\mathcal{T}_{h_{\varepsilon}})^2\Big),
\end{eqnarray}
where
\begin{eqnarray*}
\beta_0=\left(\frac{(1+\delta_1)\big((1+\delta_2)+C_5\delta_2^{-1}r^2(h_0)\big)
+C_5\delta_1^{-1}r^2(h_0)}{1-C_5\delta_1^{-1}r^2(h_0)}\right)^{1/2}
\end{eqnarray*}
and
\begin{eqnarray*}
\gamma = \frac{\tilde{\gamma}}{1-C_5\delta_1^{-1}r^2(h_0)}.
\end{eqnarray*}
with $C_5$ the constant depending on $C_L$ similar to $C_4$ in the proof of Theorem \ref{Convergence_u_h_Theorem}.
Combing (\ref{Def_u_h_varepsilon}) and (\ref{Def_u_h_k+_u_h_varepsilon}) leads to
\begin{eqnarray*}
\|u-u_{h_{k,+}}\|_{a,\Omega}^2 + \gamma{\rm osc}(L(u_{h_{k,+}}),\mathcal{T}_{h_{k,+}})
\leq \check{\beta}\Big(\|u-u_{h_k}\|_{a,\Omega}^2+\gamma{\rm osc}(L(u_{h_k}),\mathcal{T}_{h_k})^2\Big)
\end{eqnarray*}
with $\check{\beta}=\frac{1}{\sqrt{2}}\beta_0\beta_1$.

It is seen from $h_0\ll 1$ and (\ref{Condition_beta_1}) that $\check{\beta}^2\in (0,\frac{1}{2})$.
Thus by Corollary \ref{Refine_Element_Estimate_Corollary} we have that $\mathcal{T}_{h_{k,+}}$ satisfies
\begin{eqnarray*}
\sum_{T\in\mathcal{M}(\mathcal{T}_{h_{k,+}})}\eta_{h_k}(T)^2 \geq \check{\theta}\sum_{T\in\mathcal{T}_{h_k}}\eta_{h_k}(T)^2,
\end{eqnarray*}
where $\check{\theta}=\frac{\tilde{C}_2^2(1-2\hat{\beta}^2)}{\tilde{C}_0(\tilde{C}_1^2+(1+2C_L^2\tilde{C}_1^2)\hat{\gamma})}$,
$\hat{\gamma}=\frac{\gamma}{1-C_5\delta_1^{-1}r^2(h_0)}$, $\tilde{C}_0=\max\{1,\frac{\tilde{C}_3^2}{\hat{\gamma}}\}$ and
\begin{eqnarray*}
\hat{\beta}=\left( \frac{(1+\delta_1+C_5\delta_1^{-1}r^2(h_0))\check{\beta}^2
+C_5\delta_1^{-1}r^2(h_0)}{1-C_5r(h_0)(1+r(h_0))} \right)^{1/2}.
\end{eqnarray*}
From the definition of $\gamma$ (see (\ref{Def_Gamma})), 
we obtain that $\hat{\gamma}<1$. On the other hand we have
$\tilde{C}_3 > 1 $ and hence $\tilde{C}_0 = \frac{\tilde{C}_3^2}{\hat{\gamma}}$.
 Consequently, we can write $\check{\theta}$ as $\check{\theta}=\frac{\tilde{C}_2^2(1-2\hat{\beta}^2)}{\tilde{C}_3^2(\frac{\tilde{C}_1^2}{\hat{\gamma}}+(1+2C_L^2\tilde{C}_1^2))}$.

Since $h_0\ll 1$ and (\ref{Condition_beta_1_delta_1_delta_2}), we obtain that $\hat{\gamma}>\gamma$ and $\hat{\beta}\in (0, \frac{1}{\sqrt{2}}\beta)$. Using (\ref{Setting_C_1}) and (\ref{Setting_C_2_C_3}), we get
\begin{eqnarray}
\check{\theta} &=& \frac{\tilde{C}_2^2(1-2\hat{\beta}^2)}{\tilde{C}_3^2(\frac{\tilde{C}_1^2}{\hat{\gamma}}+(1+2C_L^2\tilde{C}_1^2))}
\geq \frac{\tilde{C}_2^2}{\tilde{C}_3^2\Big(\frac{\tilde{C}_1^2}{\hat{\gamma}}+(1+2C_L^2\tilde{C}_1^2)\Big)}(1-\beta^2)\nonumber\\
&=& \frac{(2+\tilde{C}r^2(h_0))C_2^2}{(2+\tilde{C}r^2(h_0))C_3^2\Big(\frac{\tilde{C}_1^2}{\hat{\gamma}}
+(1+2C_L^2\tilde{C}_1^2)\Big)}(1-\beta^2)\nonumber\\
&\geq& \frac{C_2^2}{C_3^2\Big(\frac{C_1^2}{\gamma}+(1+2C_L^2C_1^2)\Big)}(1-\beta^2)\nonumber\\
&=&\frac{C_2^2\gamma}{C_3^2\Big(C_1^2+(1+2C_L^2C_1^2)\gamma\Big)}(1-\beta^2) > \theta.
\end{eqnarray}

Note that {\bf Marking Strategy $E$} selects a minimum set $\mathcal{M}(\mathcal{T}_{h_{k,*}})$ satisfying
\begin{eqnarray*}
\sum_{T\in\mathcal{M}(\mathcal{T}_{h_{k,*}})}\eta_{h_k}(T)^2 \geq \theta\sum_{T\in\mathcal{T}_{h_k}}\eta_{h_k}(T)^2,
\end{eqnarray*}
which implies that the nonconforming partition $\mathcal{T}_{h_{k,*}}$ satisfies
\begin{eqnarray}
\#\mathcal{T}_{h_{k,*}}-\#\mathcal{T}_{h_k} & \leq & \#\mathcal{T}_{h_{k,+}}
-\#\mathcal{T}_{h_k}  \leq \# \mathcal{T}_{h_{\varepsilon}} -\#\mathcal{T}_{h_0}\nonumber\\
&&\hskip-2.5cm\leq\left(\frac{1}{\sqrt{2}}\beta_1\right)^{-1/s}\big(\|u-u_{h_k}\|_{a,\Omega}^2+\gamma {\rm osc} (L(u_{h_k}),\mathcal{T}_{h_k})^2\big)^{-1/(2s)}|u|_s^{1/s}.
\end{eqnarray}
This is the desired estimate (\ref{Number_T_h_estimate}) with an explicit dependence on the discrepancy between
$\theta$ and $\frac{C_2^2\gamma}{C_3^2(C_1^2+(1+2C_L^2C_1^2)\gamma)}$ via $\beta_1$.
\end{proof}

We are now in the position to prove the optimal complexity of {\bf Adaptive Algorithm $C$} which is stated in the following theorem.
Please refer the papers \cite{CasconKreuzerNochettoSiebert} and \cite{DaiXuZhou} for the proof.
\begin{theorem}(\cite{CasconKreuzerNochettoSiebert,DaiXuZhou})
 Let $(\lambda, u)\in \mathcal{R}\times(H_0^1(\Omega)\cap \mathcal{A}^s)$ be some simple eigenpair of (\ref{eigenproblem})
  and  $\{(\lambda_{h_k}, u_{h_k})\}_{k\in\mathbb{N}_0}$ be the sequence of finite element approximations corresponding
to the sequence of pairs $\{\mathcal{R}\times V_{h_k}\}_{k\in\mathbb{N}_0}$ produced by {\bf Adaptive Algorithm $C$} satisfying
(\ref{Error_u_u_h_k_-1}). Then under the assumption (\ref{Assumption_h_0}), the $n$-th iterate solution
$(\lambda_{h_n}, u_{h_n})$ of {\bf Adaptive Algorithm $C$} satisfies the optimal bounds
\begin{eqnarray}
\|u-u_{h_k}\|_{a,\Omega}^2 +\gamma {\rm osc}(L(u_{h_n}),\mathcal{T}_{h_n})^2 &\lesssim & (\#\mathcal{T}_{h_n}-\#\mathcal{T}_{h_0})^{-2s},\label{Error_Adaptive_eigfunction_n}\\
|\lambda_{h_n}-\lambda|&\lesssim & (\#\mathcal{T}_{h_n}-\#\mathcal{T}_{h_0})^{-2s},\label{Error_Adaptive_Eigenvalue_n}
\end{eqnarray}
where the hidden constant depends on the exact eigenpair $(\lambda, u)$ and the discrepancy between $\theta$ and $\frac{C_2^2\gamma}{C_3^2(C_1^2+(1+2C_L^2C_1^2)\gamma)}$.
\end{theorem}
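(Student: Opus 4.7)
The plan is to combine three ingredients that are already available: (i) the per-step bound on produced elements given by the preceding \textbf{Upper Bound of DOF} lemma, (ii) the cumulative refinement estimate (\ref{Refinement_bounded}) of Lemma \ref{Number_Up_Bound}, and (iii) the geometric contraction (\ref{Convergence_u_h_k_u_h_k_1}), which, by virtue of assumption (\ref{Assumption_h_0}), collapses the two-term recursion (\ref{Convergence_Adaptive_Eigenfunct_2}) to a single-step decay $d_{h_{k+1}}^2\le\tilde{\alpha}^2 d_{h_k}^2$ for $d_{h_k}^2:=\|u-u_{h_k}\|_{a,\Omega}^2+\gamma\eta_{h_k}^2(u_{h_k},\Omega)$ with $\tilde{\alpha}<1$.

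First, I apply the \textbf{Upper Bound of DOF} lemma at each level $k=0,\dots,n-1$, with $\mathcal{T}_{h_{k,*}}$ the refinement generated by Adaptive Algorithm $C$ (so the marked set meets \textbf{Marking Strategy $E$}) and $\theta$ in the admissible range. This gives
\[
 \#\mathcal{T}_{h_{k,*}}-\#\mathcal{T}_{h_k}\lesssim\bigl(\|u-u_{h_k}\|_{a,\Omega}^2+\gamma\,\mathrm{osc}(L(u_{h_k}),\mathcal{T}_{h_k})^2\bigr)^{-1/(2s)}|u|_s^{1/s}.
\]
Summing in $k$ and inserting into (\ref{Refinement_bounded}) bounds $\#\mathcal{T}_{h_n}-\#\mathcal{T}_{h_0}$ by the corresponding sum of inverse powers. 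Next, from the definitions (\ref{Oscillation_Def})–(\ref{eta_definition}) one has $\mathrm{osc}(L(u_{h_k}),\mathcal{T}_{h_k})^2\lesssim\eta_{h_k}^2(u_{h_k},\Omega)$, so $\|u-u_{h_k}\|_{a,\Omega}^2+\gamma\,\mathrm{osc}^2\lesssim d_{h_k}^2$. Running the contraction backwards yields $d_{h_k}^2\ge\tilde{\alpha}^{-2(n-k)}d_{h_n}^2$ for every $k\le n$, hence
\[
\bigl(\|u-u_{h_k}\|_{a,\Omega}^2+\gamma\,\mathrm{osc}^2\bigr)^{-1/(2s)}\lesssim\tilde{\alpha}^{(n-k)/s}\,(d_{h_n}^2)^{-1/(2s)}.
\]

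Summing the geometric series $\sum_{k=0}^{n-1}\tilde{\alpha}^{(n-k)/s}\le\tilde{\alpha}^{1/s}/(1-\tilde{\alpha}^{1/s})$ (which converges precisely because $\tilde{\alpha}<1$) gives $\#\mathcal{T}_{h_n}-\#\mathcal{T}_{h_0}\lesssim(d_{h_n}^2)^{-1/(2s)}|u|_s^{1/s}$. Rearranging produces (\ref{Error_Adaptive_eigfunction_n}) since $\|u-u_{h_n}\|_{a,\Omega}^2+\gamma\,\mathrm{osc}(L(u_{h_n}),\mathcal{T}_{h_n})^2\le d_{h_n}^2$. The eigenvalue bound (\ref{Error_Adaptive_Eigenvalue_n}) then follows from the Rayleigh quotient identity (\ref{Expansion_Eigenvalue}) and (\ref{Error_lambda_h_k}), which give $|\lambda-\lambda_{h_n}|\lesssim\|u-u_{h_n}\|_{a,\Omega}^2$.

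The main obstacle is ensuring that all constants entering the geometric sum stay independent of $n$ and of the unknown eigenpair. Two issues must be handled with care: one has to invoke assumption (\ref{Assumption_h_0}) throughout, both to guarantee that the single-step contraction rate $\tilde{\alpha}<1$ is uniform in $k$ and to keep $r(h_0)$-dependent corrections harmless in the equivalence between the \emph{oscillation-based} quantity appearing in the DOF bound and the \emph{estimator-based} quantity $d_{h_k}^2$ whose decay is known. Once the uniform contraction and uniform equivalence are secured, the optimal rate $(\#\mathcal{T}_{h_n}-\#\mathcal{T}_{h_0})^{-2s}$ emerges from the geometric summation as a routine step, exactly in the style of \cite{CasconKreuzerNochettoSiebert,DaiXuZhou}.
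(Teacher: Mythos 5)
Your overall plan is the same as the paper's — per-level DOF bound, cumulative refinement estimate, geometric contraction, and finally the Rayleigh quotient identity — but there is a genuine gap in the step that converts the oscillation-based quantity in the DOF lemma into the estimator-based quantity $d_{h_k}^2$ that actually contracts.

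The DOF lemma gives
$\#\mathcal{T}_{h_{k,*}}-\#\mathcal{T}_{h_k}\lesssim\bigl(\|u-u_{h_k}\|_{a,\Omega}^2+\gamma\,\mathrm{osc}(L(u_{h_k}),\mathcal{T}_{h_k})^2\bigr)^{-1/(2s)}|u|_s^{1/s}$,
with a negative exponent $-1/(2s)$. To replace the oscillation-based quantity here by $d_{h_k}^2$ you must have a \emph{lower} bound on the oscillation-based quantity in terms of $d_{h_k}^2$, i.e.\ you need
$d_{h_k}^2 \lesssim \|u-u_{h_k}\|_{a,\Omega}^2+\gamma\,\mathrm{osc}(L(u_{h_k}),\mathcal{T}_{h_k})^2$.
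You instead invoke the trivial inequality $\mathrm{osc}(L(u_{h_k}),\mathcal{T}_{h_k})\leq\eta_{h_k}(u_{h_k},\Omega)$, which yields the opposite direction, $\|u-u_{h_k}\|_{a,\Omega}^2+\gamma\,\mathrm{osc}^2\lesssim d_{h_k}^2$. Chaining this with $d_{h_k}^2\geq\tilde{\alpha}^{-2(n-k)}d_{h_n}^2$ and then passing to negative powers does not give the claimed bound
$\bigl(\|u-u_{h_k}\|_{a,\Omega}^2+\gamma\,\mathrm{osc}^2\bigr)^{-1/(2s)}\lesssim\tilde{\alpha}^{(n-k)/s}(d_{h_n}^2)^{-1/(2s)}$;
the two inequalities you have face opposite directions, so the chain breaks.

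What is actually needed is the nontrivial reverse comparison, the paper's inequality (\ref{Equivalence_eta_osc}):
$\|u-u_{h_k}\|_{a,\Omega}^2+\gamma\,\eta_{h_k}^2(u_{h_k},\Omega)\leq\check{C}\bigl(\|u-u_{h_k}\|_{a,\Omega}^2+\gamma\,\mathrm{osc}(L(u_{h_k}),\mathcal{T}_{h_k})^2\bigr)$.
This does not follow from the definitions of $\eta$ and $\mathrm{osc}$; it is a consequence of the \emph{efficiency} (lower) bound (\ref{Lower_Bound_Eigenfunct}), namely
$C_2^2\eta_h^2(u_h,\Omega)-C_3^2\,\mathrm{osc}(L(u_h),\mathcal{T}_h)^2\leq\|u-u_h\|_{a,\Omega}^2+\mathcal{O}(r^2(h_0))\|u-u_H\|_{a,\Omega}^2$,
combined with the contraction (\ref{Convergence_u_h_k_u_h_k_1}) and the assumption (\ref{Assumption_h_0}) to absorb the $r(h_0)$-weighted correction. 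Once you insert that estimate in place of the trivial $\mathrm{osc}\leq\eta$, the rest of your argument — geometric summation, Lemma \ref{Number_Up_Bound}, concluding with $\mathrm{osc}\leq\eta$ on the final level $n$ and with Lemma \ref{Expansion_Eigenvalue_Lemma} for the eigenvalue — goes through exactly as in the paper.
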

\begin{proof}
We give the proof in the same way in \cite{DaiXuZhou}. From (\ref{Lower_Bound_Eigenfunct}) and (\ref{Convergence_u_h_k_u_h_k_1}), we have
\begin{eqnarray}\label{Equivalence_eta_osc}
\|u-u_{h_k}\|_{a,\Omega}^2+\gamma \eta_{h_k}^2(u_{h_k},\Omega)\leq
 \check{C}\Big(\|u-u_{h_k}\|_{a,\Omega}^2+\gamma {\rm osc}(L(u_{h_k}),\mathcal{T}_{h_k})^2\big),
\end{eqnarray}
where $\check{C}$ depends on $C_2$, $C_3$, $\gamma$, and $r(h_0)$.

Combining (\ref{Number_T_h_estimate}) and (\ref{Equivalence_eta_osc}) leads to
\begin{eqnarray}\label{Number_T_h_estimate_eta}
\#\mathcal{T}_{h_{k,*}}-\#\mathcal{T}_{h_k}\leq C\check{C}^{\frac{1}{2s}}\Big(\|u-u_{h_k}\|_{a,\Omega}^2+\gamma\eta_{h_k}(u_{h_k},\Omega)\Big)^{-1/(2s)}|u|^{1/s}.
\end{eqnarray}
From (\ref{Convergence_u_h_k_u_h_k_1}) we have
\begin{eqnarray}\label{Relation_k_n}
&&\Big(\|u-u_{h_k}\|_{a,\Omega}^2+\gamma\eta_{h_k}^2(u_{h_k},\Omega)\Big)^{-1/(2s)}\nonumber\\
&&\hskip2cm \leq \alpha^{(n-k)/s}
\Big(\|u-u_{h_n}\|_{a,\Omega}^2+\gamma\eta_{h_n}^2(u_{h_n},\Omega)\Big)^{-1/(2s)}.
\end{eqnarray}
Employing Lemma \ref{Number_Up_Bound}, (\ref{Number_T_h_estimate}), and (\ref{Relation_k_n}), we can
obtain
\begin{eqnarray*}
\#\mathcal{T}_{h_n}-\#\mathcal{T}_{h_0} &\lesssim& \sum_{k=0}^{n-1}\big(\#\mathcal{T}_{h_{k,*}}-\mathcal{T}_{h_k}\big)\nonumber\\
&\lesssim& |u|_s^{1/s}\sum_{k=0}^{n-1}\Big(\|u-u_{h_k}\|_{a,\Omega}^2+\gamma\eta_{h_k}^2(u_{h_k},\Omega)\Big)^{-1/(2s)}\nonumber\\
&\lesssim& \Big(\|u-u_{h_n}\|_{a,\Omega}^2+\gamma\eta_{h_n}^2(u_{h_n},\Omega)\Big)^{-1/(2s)}|u|_s^{1/s}
\sum_{k=1}^n\alpha^{\frac{k}{s}}.
\end{eqnarray*}
Combining the fact $\alpha<1$ leads to
\begin{eqnarray*}
\#\mathcal{T}_{h_n}-\#\mathcal{T}_{h_0} &\lesssim& \Big(\|u-u_{h_n}\|_{a,\Omega}^2+\gamma\eta_{h_n}^2(u_{h_n},\Omega)\Big)^{-1/(2s)}|u|_s^{1/s}.
\end{eqnarray*}
Sice ${\rm osc}(L(u_h),\mathcal{T}_{h_n})\leq \eta_{h_n}(u_{h_n},\Omega)$, we have
\begin{eqnarray*}
\#\mathcal{T}_{h_n}-\#\mathcal{T}_{h_0} &\lesssim& \Big(\|u-u_{h_n}\|_{a,\Omega}^2+\gamma{\rm osc}(u_{h_n},\mathcal{T}_{h_n})^2\Big)^{-1/(2s)}|u|_s^{1/s}.
\end{eqnarray*}
This is the desired result (\ref{Error_Adaptive_eigfunction_n}) and (\ref{Error_Adaptive_Eigenvalue_n})
 can be derived from (\ref{Error_Adaptive_eigfunction_n}) and Lemma \ref{Expansion_Eigenvalue_Lemma}.
 Then the proof is complete.
\end{proof}

\section{Numerical experiments}
In this section, we present sone numerical examples of {\bf Adaptive Algorithm $C$} for the second order elliptic
eigenvalue problems by the linear finite element method.

{\bf Example 1.} In this example, we consider the following eigenvalue problem (see \cite{Greiner})
\begin{equation}\label{eigenproblem_Exam_1}
\left\{
\begin{array}{rcl}
-\frac{1}{2}\Delta u +\frac{1}{2}|x|^2u&=&\lambda u\ \ \ {\rm in}\ \Omega,\\
u&=&0\ \ \ \ \ {\rm on}\ \partial\Omega,\\
\|u\|_{0,\Omega}&=&1,
\end{array}
\right.
\end{equation}
where $\Omega= \mathcal{R}^2$ and $|x|=\sqrt{x_1^2+x_2^2}$.
The first eigenvalue of (\ref{eigenproblem_Exam_1}) is $\lambda=1$ and the associated eigenfunction
is $u=\kappa e^{-|x|^2/2}$ with any nonzero constant $\kappa$.
In our computation, we set $\Omega=(-5, 5)\times (-5, 5)$.

First, we investigate the numerical results for the first eigenvalue approximations.
We give the numerical results for the eigenpair approximation by {\bf Adaptive Algorithm $C$} with the parameter $\theta = 0.4$.
Figure \ref{Mesh_AFEM_Exam_1} shows the initial triangulation and the triangulation after $14$
adaptive iterations. Figure \ref{Convergence_AFEM_Exam_1_First}
 gives the corresponding numerical results for the first $19$ adaptive iterations.
In order to show the efficiency of {\bf Adaptive Algorithm $C$} more clearly, we compare
the results with those obtained with direct AFEM.
\begin{figure}[ht]
\centering
\includegraphics[width=7cm,height=7cm]{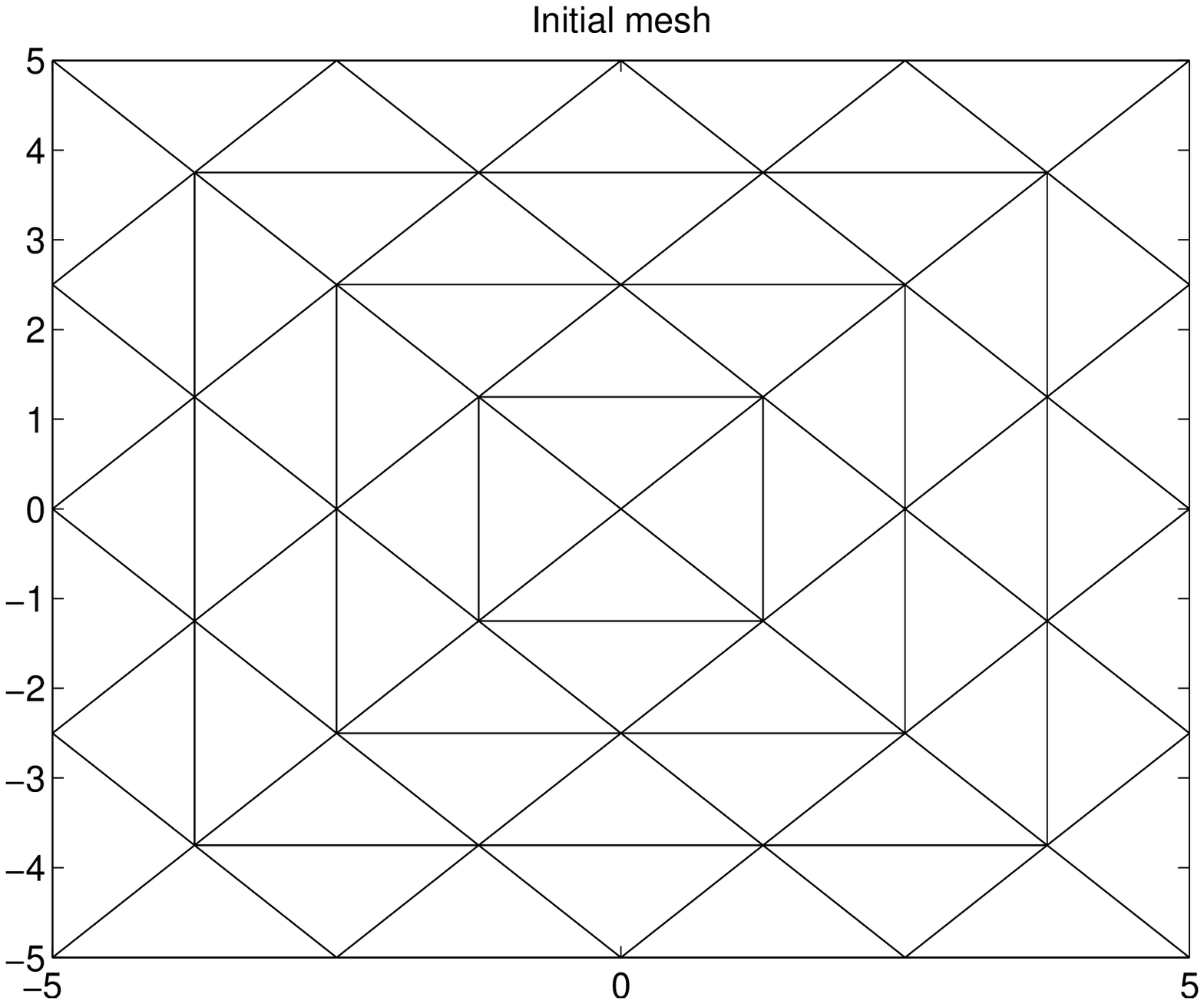}
\includegraphics[width=7cm,height=7cm]{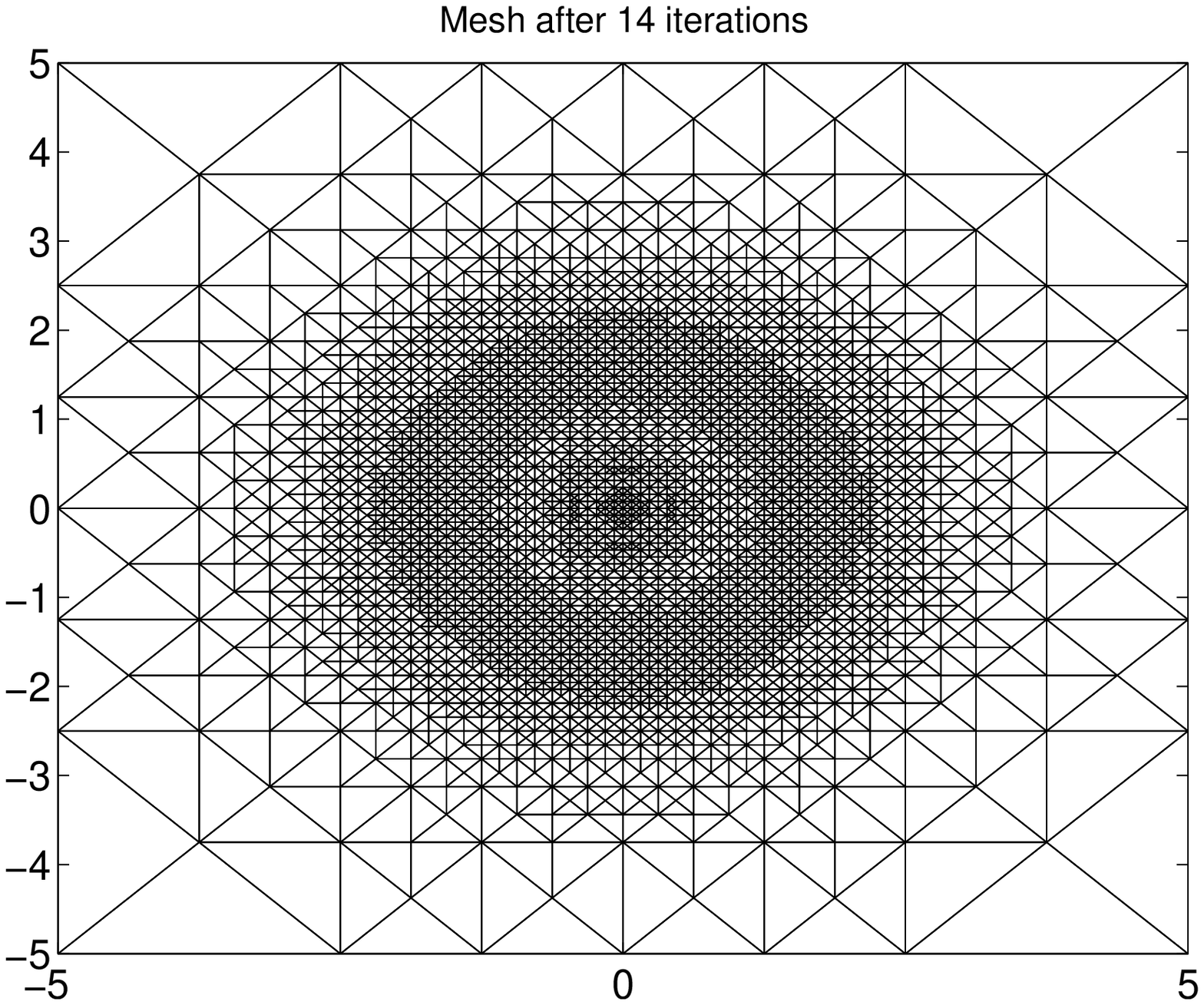}
\caption{The initial triangulation and the one after 14 adaptive iterations for
Example 1} \label{Mesh_AFEM_Exam_1}
\end{figure}
\begin{figure}[ht]
\centering
\includegraphics[width=7cm,height=7cm]{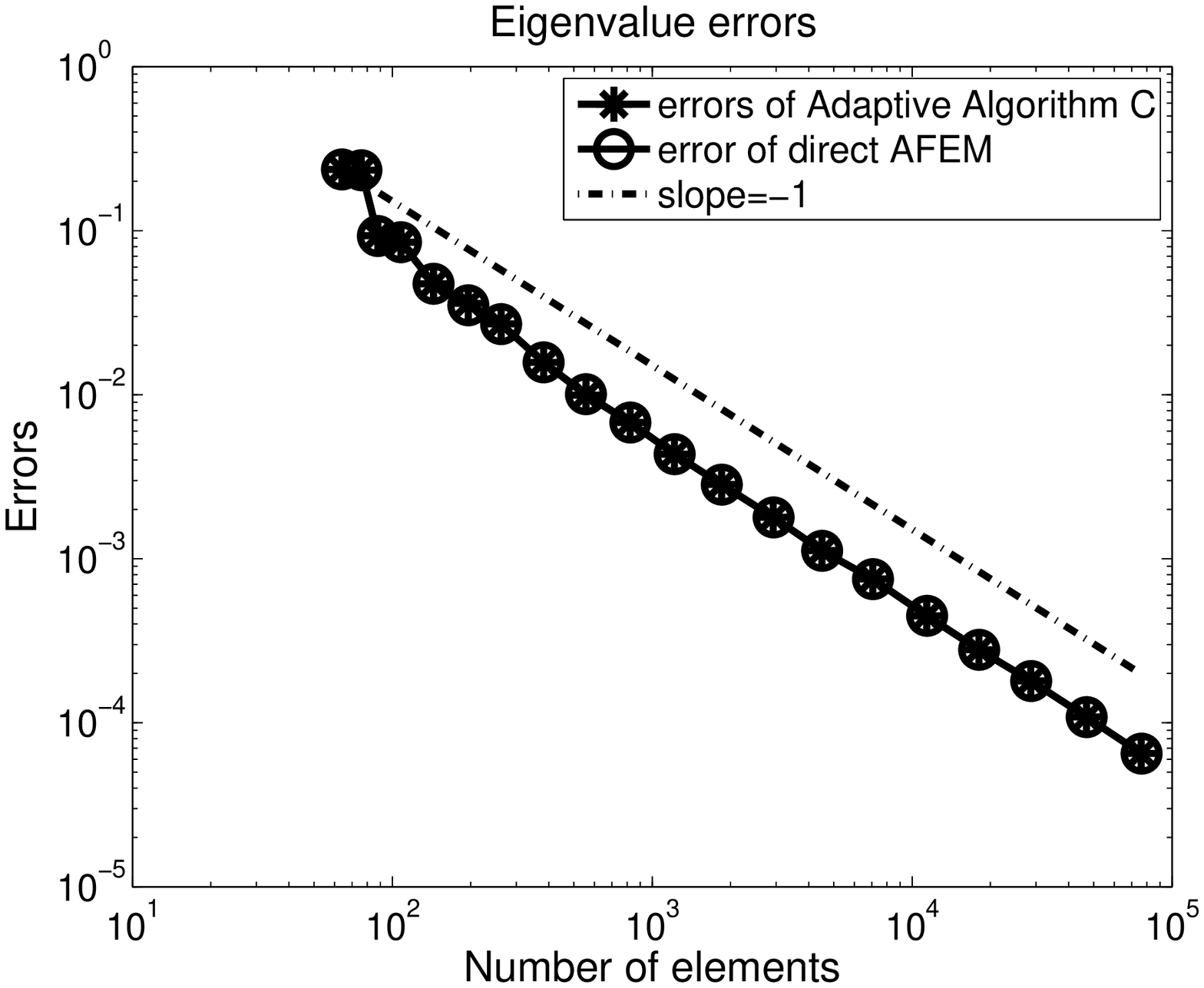}
\includegraphics[width=7cm,height=7cm]{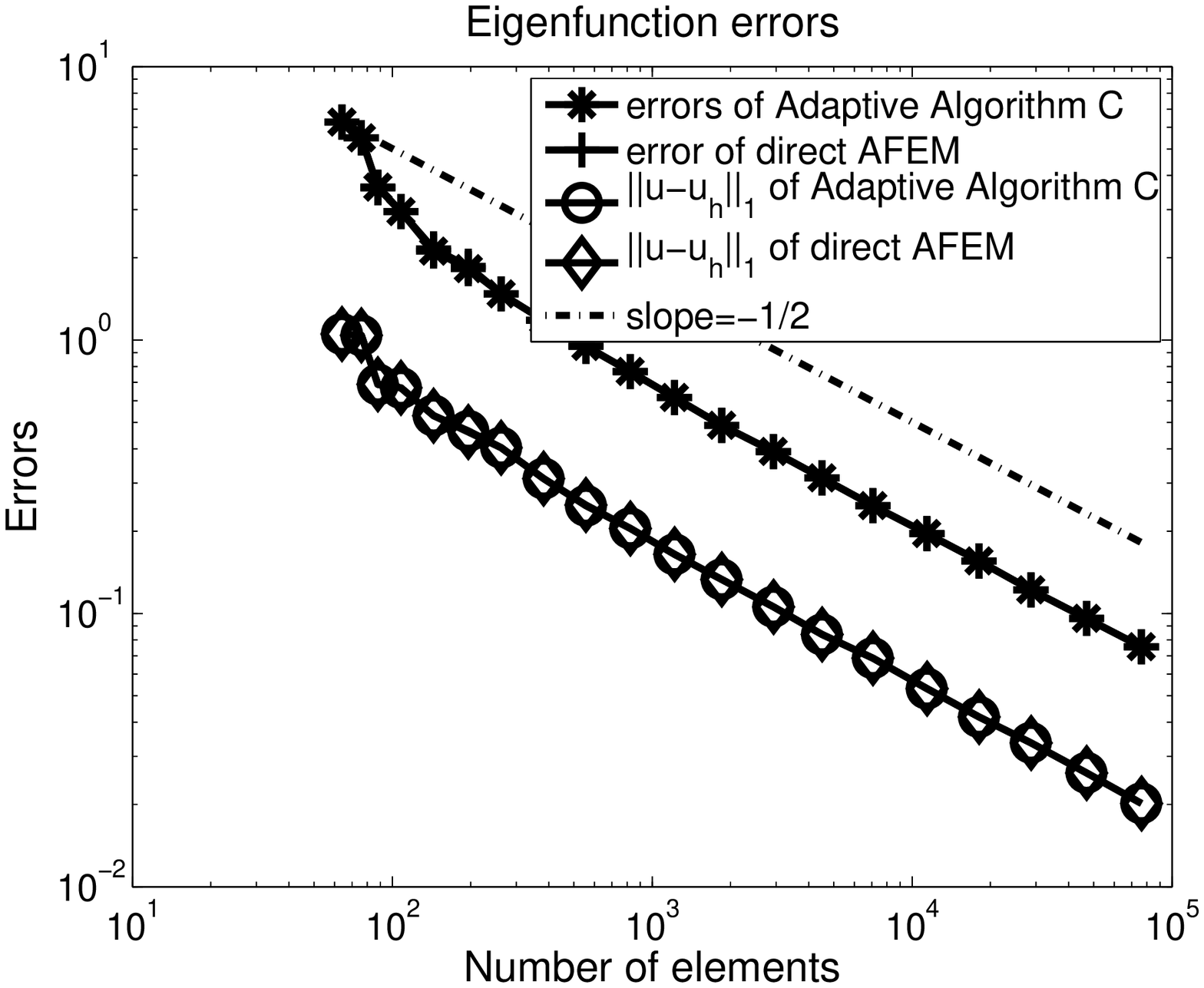}
\caption{The errors of the eigenvalue and the associated eigenfunction approximations by {\bf Adaptive Algorithm $C$}
and direct AFEM for Example 1} \label{Convergence_AFEM_Exam_1_First}
\end{figure}
It is observed from Figures \ref{Convergence_AFEM_Exam_1_First},
 the approximations of eigenvalue as well as eigenfunction approximations have the
 optimal convergence rate which coincides with our theory.

{\bf Example 2.}
In the second example, we consider the Laplace eigenvalue problem on the $L$-shape domain
\begin{equation}\label{eigenproblem_Exam_2}
\left\{
\begin{array}{rcl}
-\Delta u &=&\lambda u\ \ \ \ {\rm in}\ \Omega,\\
u&=&0\ \ \ \ \ \ {\rm on}\ \partial\Omega,\\
\|u\|_{0,\Omega}&=&1,
\end{array}
\right.
\end{equation}
where $\Omega=(-1,1)\times(-1,1)\backslash[0, 1)\times (-1, 0]$.
Since $\Omega$ has a reentrant corner, eigenfunctions with singularities are expected. The
convergence order for eigenvalue approximations is less than $2$ by the linear finite element method
 which is the order predicted by the theory for regular eigenfunctions.

First, we investigate the numerical results for the first eigenvalue approximations.
Since the exact eigenvalue is not known, we choose an adequately accurate approximation
$\lambda = 9.6397238440219$ as the exact first eigenvalue for our numerical tests.
We give the numerical results for the first eigenpair approximation of {\bf Adaptive Algorithm $C$}
with the parameter $\theta = 0.4$.
Figure \ref{Mesh_AFEM_Exam_2} shows the initial triangulation and the triangulation after $12$
adaptive iterations. Figure \ref{Convergence_AFEM_Exam_2_First}
 gives the corresponding numerical results for the first $20$ adaptive iterations.
In order to show the efficiency of {\bf Adaptive Algorithm $C$} more clearly, we compare
the results with those obtained by direct AFEM.
\begin{figure}[ht]
\centering
\includegraphics[width=7cm,height=6.8cm]{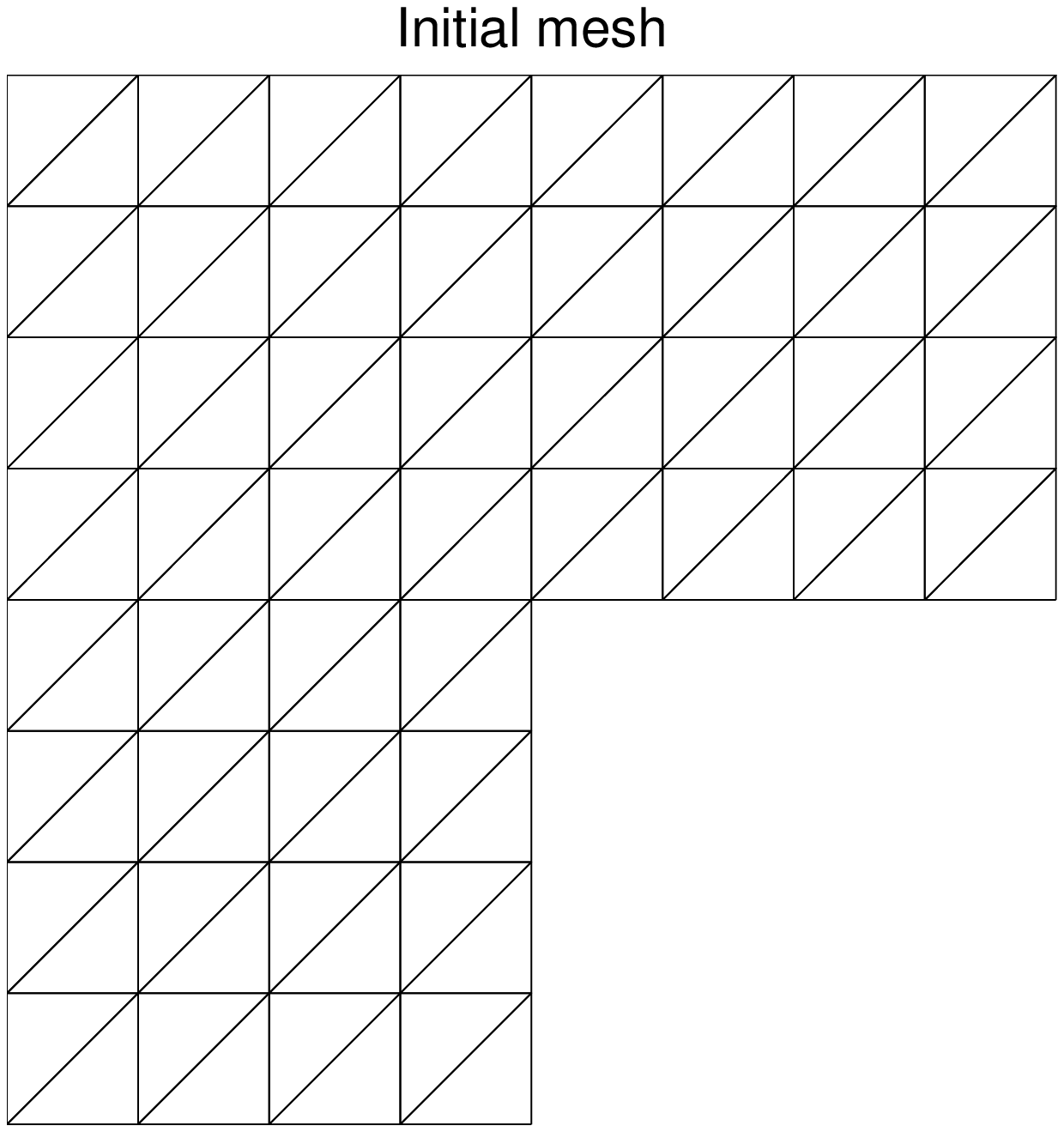}
\includegraphics[width=7cm,height=7cm]{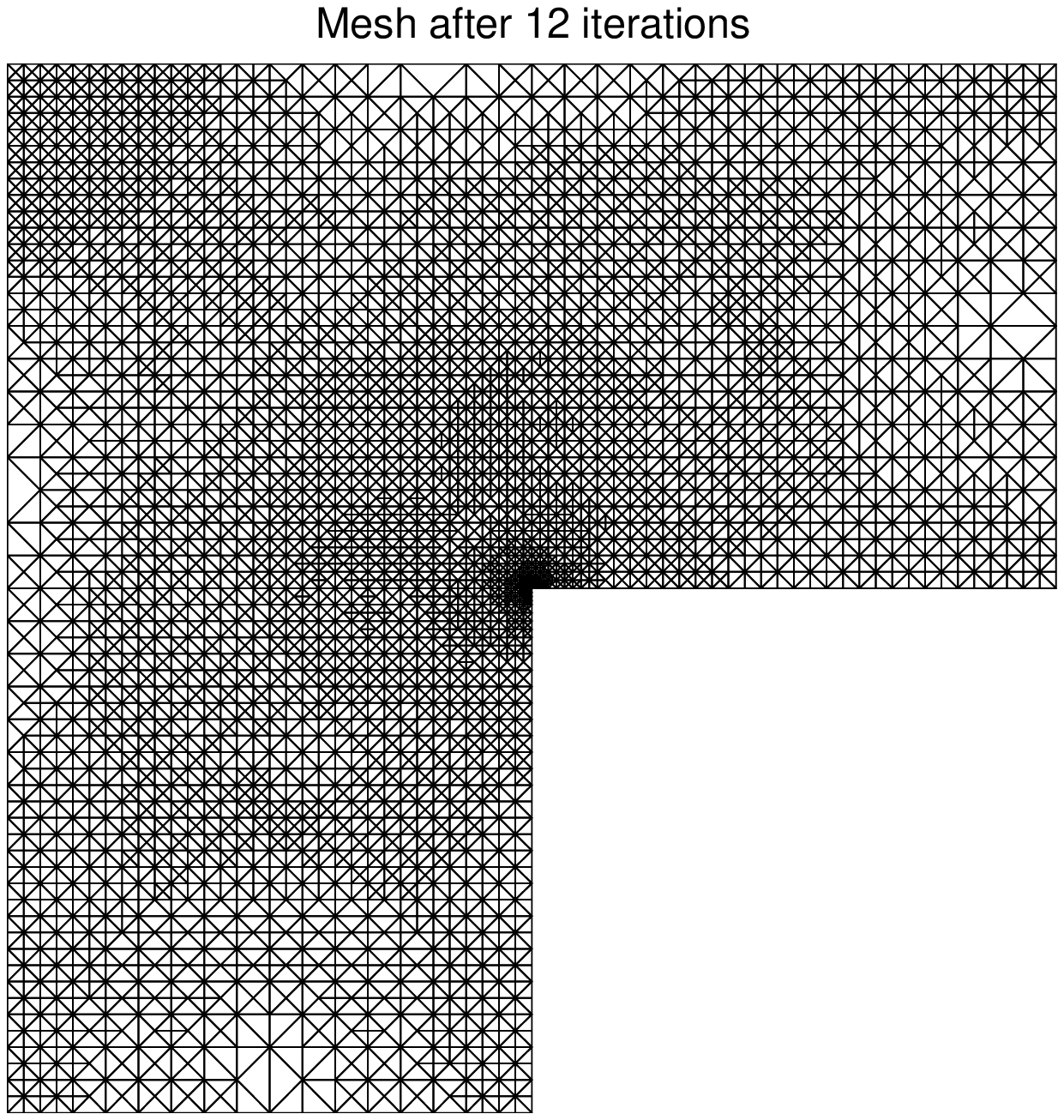}
\caption{The initial triangulation and the one after 12 adaptive iterations for
Example 2} \label{Mesh_AFEM_Exam_2}
\end{figure}
\begin{figure}[ht]
\centering
\includegraphics[width=7cm,height=7cm]{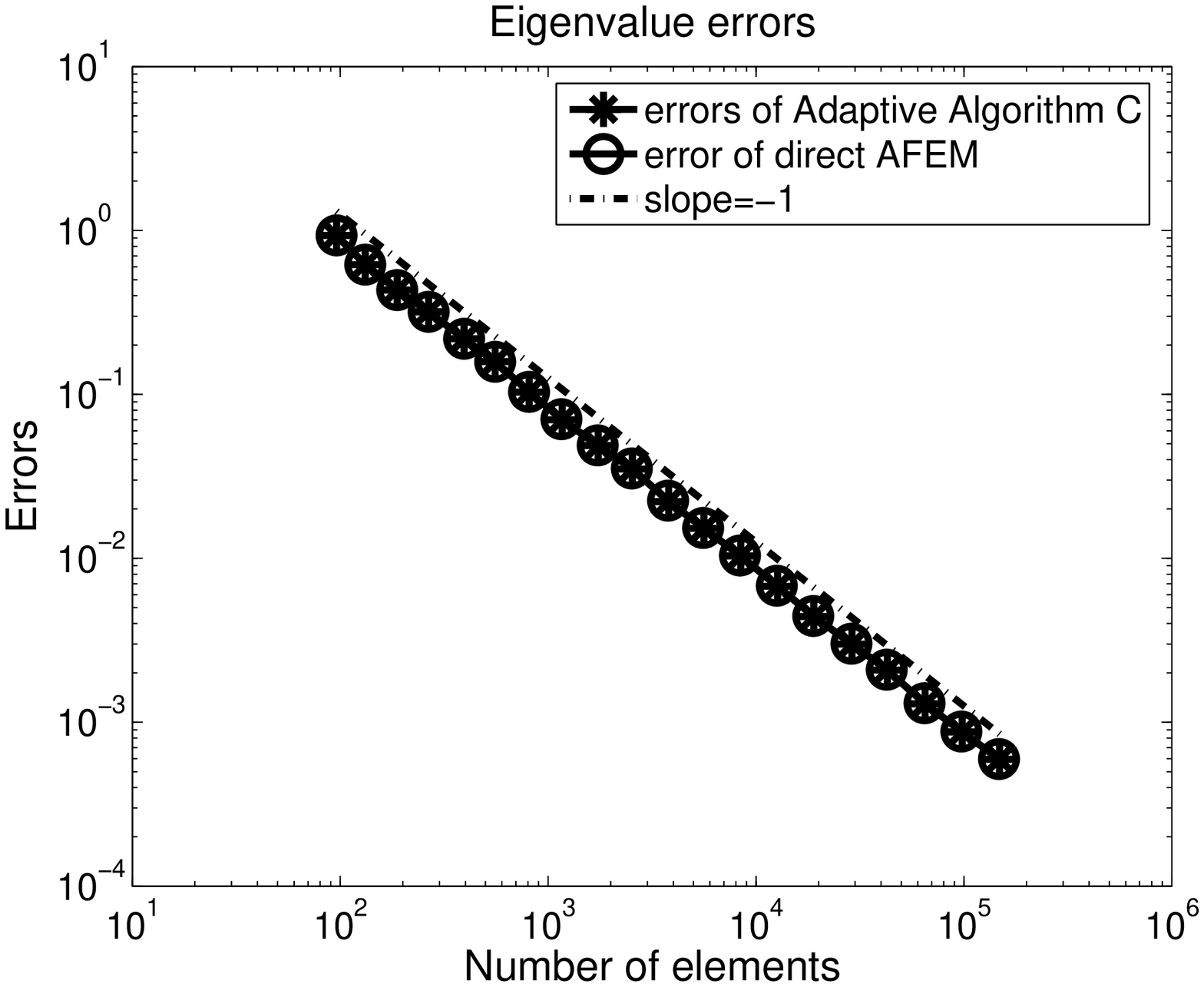}
\includegraphics[width=7cm,height=7cm]{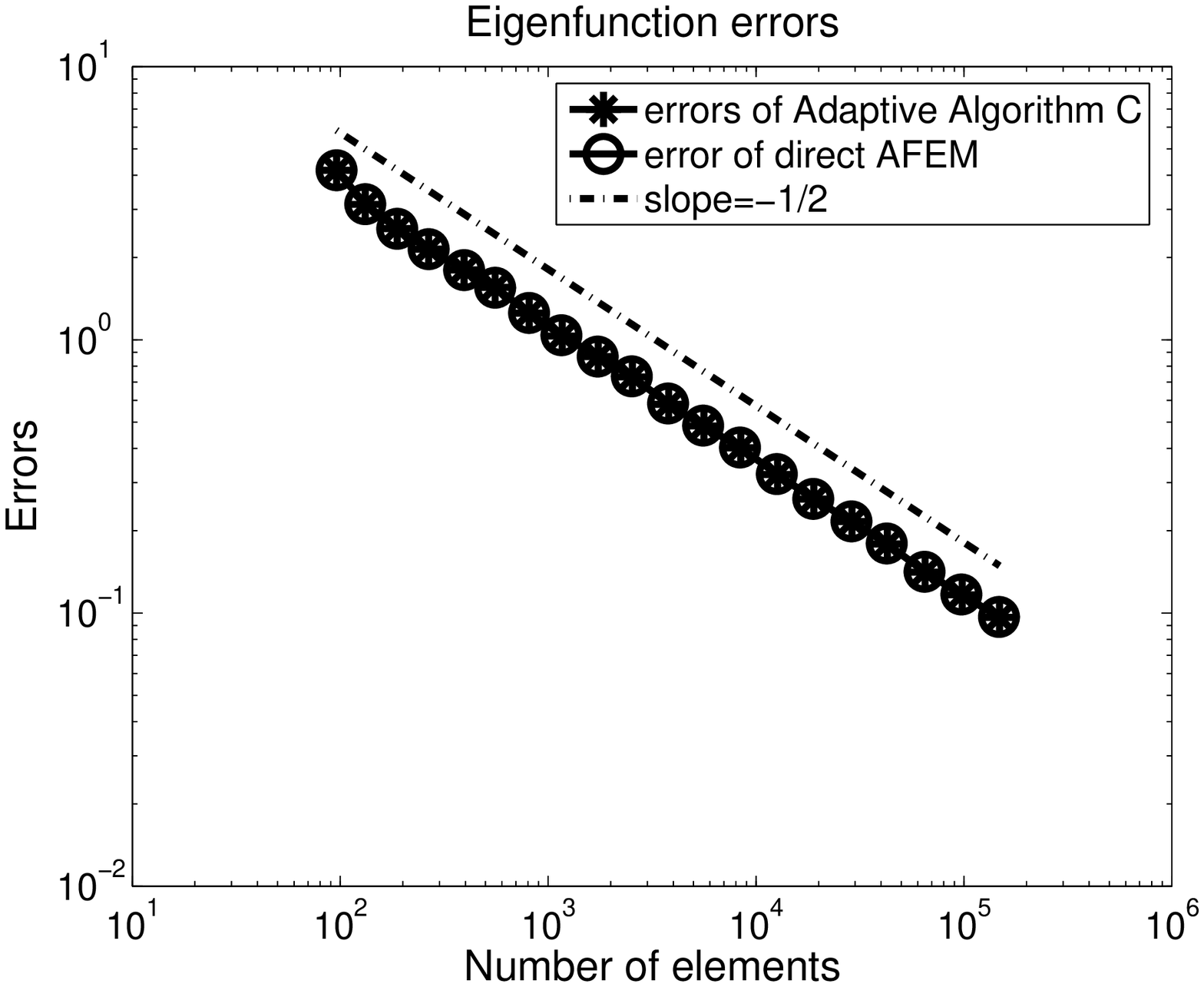}
\caption{The errors of the smallest eigenvalue approximations and the a
posteriori errors of the associated eigenfunction approximations by
{\bf Adaptive Algorithm $C$}
and direct AFEM for Example 2} \label{Convergence_AFEM_Exam_2_First}
\end{figure}
We also test {\bf Adaptive Algorithm $C$} for $5$ smallest eigenvalue approximations and their associated
eigenfunction approximations.  Figure \ref{Convergence_AFEM_Exam_2_5_Small} shows the corresponding a posteriori
error estimator $\eta_h(u_h,\Omega)$ produce by {\bf Adaptive Algorithm $C$} and direct AFEM.
\begin{figure}[ht]
\centering
\includegraphics[width=7cm,height=7cm]{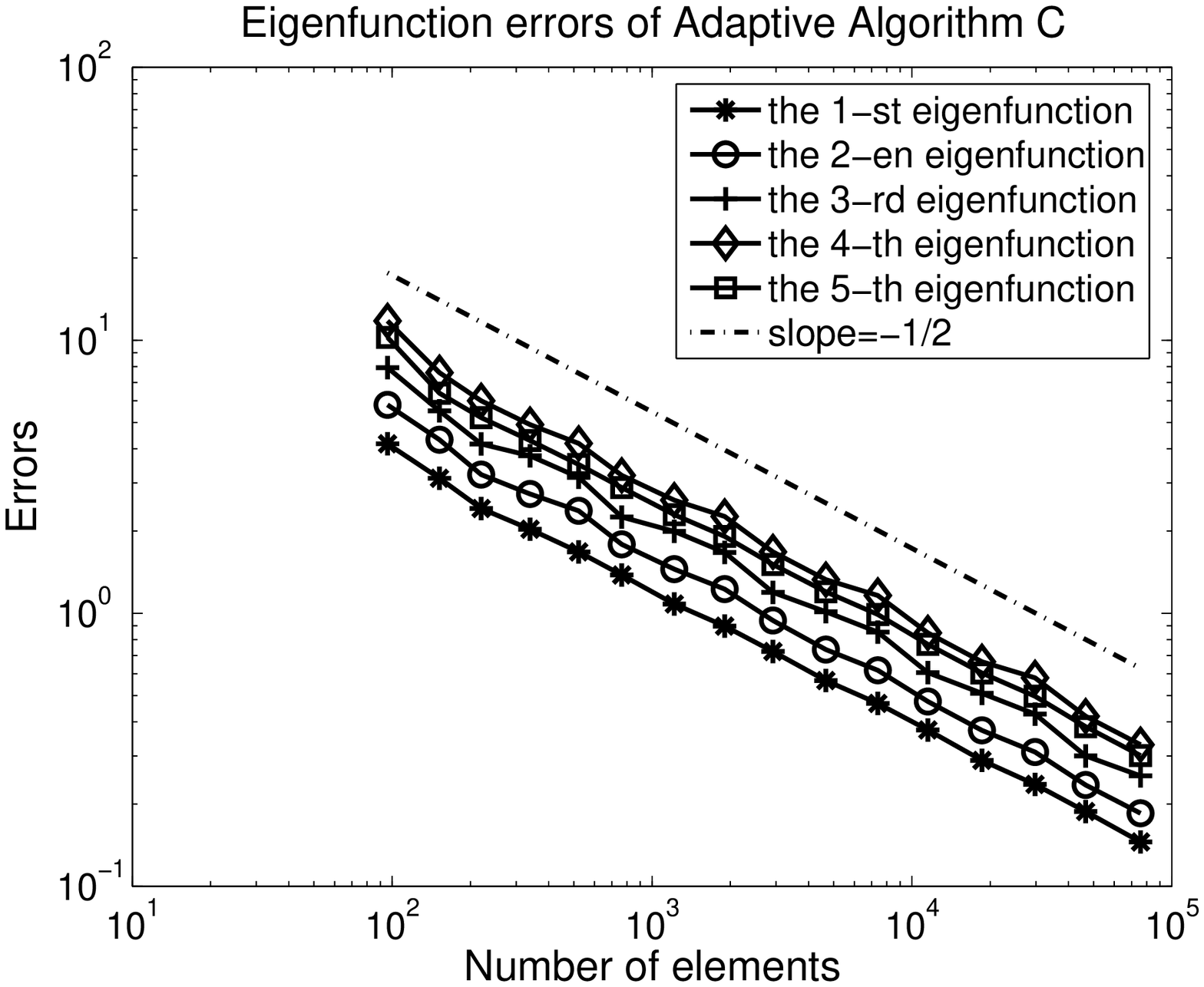}
\includegraphics[width=7cm,height=7cm]{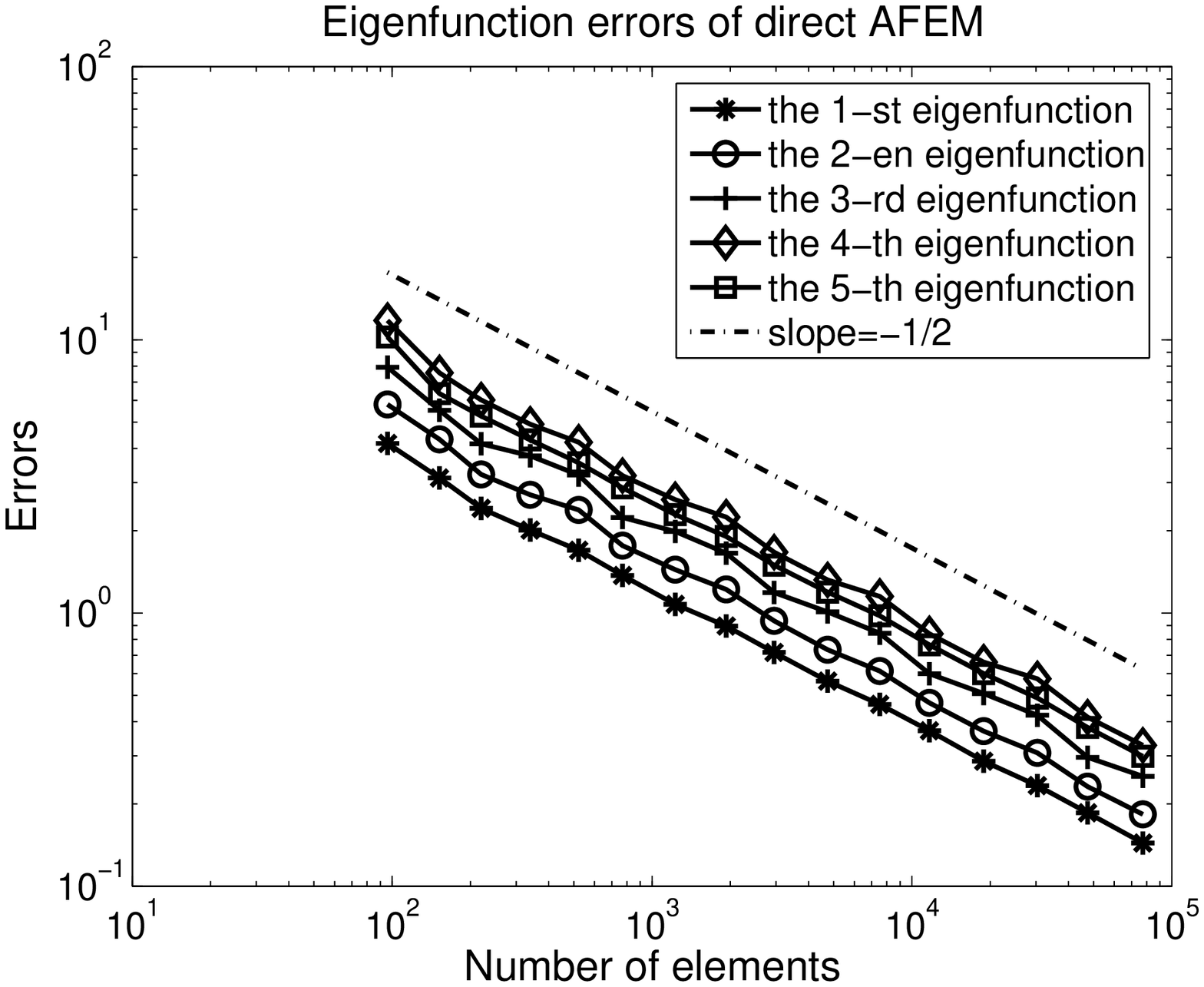}
\caption{The a posteriori error estimates of the eigenfunction approximations by {\bf Adaptive Algorithm $C$}
and direct AFEM for Example 2} \label{Convergence_AFEM_Exam_2_5_Small}
\end{figure}

From Figures \ref{Convergence_AFEM_Exam_2_First} and \ref{Convergence_AFEM_Exam_2_5_Small},
we can find the approximations of eigenvalues as well as eigenfunctions have optimal convergence
rate which coincides with our theory.

{\bf Example 3.} In this example, we consider the following second order elliptic
eigenvalue problem
\begin{equation}
\left\{
\begin{array}{rcl}
-\nabla\cdot(\mathcal{A}\nabla u) +\varphi u&=&\lambda u\ \ \ {\rm in}\ \Omega,\\
u&=&0\ \ \ \ \ {\rm on}\ \partial\Omega,\\
\|u\|_{0,\Omega}&=&1,
\end{array}
\right.
\end{equation}
with
\begin{equation*}
\mathcal{A} =
\left(
\begin{array}{cc}
1+(x_1-\frac{1}{2})^2 & (x_1-\frac{1}{2})(x_2-\frac{1}{2})\\
(x_1-\frac{1}{2})(x_2-\frac{1}{2}) & 1+(x_2-\frac{1}{2})^2
\end{array}
\right),
\end{equation*}
$\varphi=e^{(x_1-\frac{1}{2})(x_2-\frac{1}{2})}$ and
$\Omega=(-1,1)\times(-1,1)\backslash[0, 1)\times (-1, 0]$.

We first investigate the numerical results for the first eigenvalue approximations.
Since the exact eigenvalue is not known neither, we choose an adequately accurate
approximation $\lambda= 13.58258211870407$ as the exact eigenvalue for our numerical
 tests. We give the numerical results for the first eigenpair approximation by
  {\bf Adaptive Algorithm $C$} with the parameter $\theta = 0.4$. Figure \ref{Mesh_AFEM_Exam_3}
  shows the initial triangulation and the triangulation after $12$ adaptive iterations.
  Figure \ref{Convergence_AFEM_Exam_3_First} gives the corresponding numerical results for
  the first $18$ adaptive iterations. In order to show the efficiency of
  {\bf Adaptive Algorithm $C$} more clearly, we compare
the results with those obtained with direct AFEM.
\begin{figure}[ht]
\centering
\includegraphics[width=7cm,height=7cm]{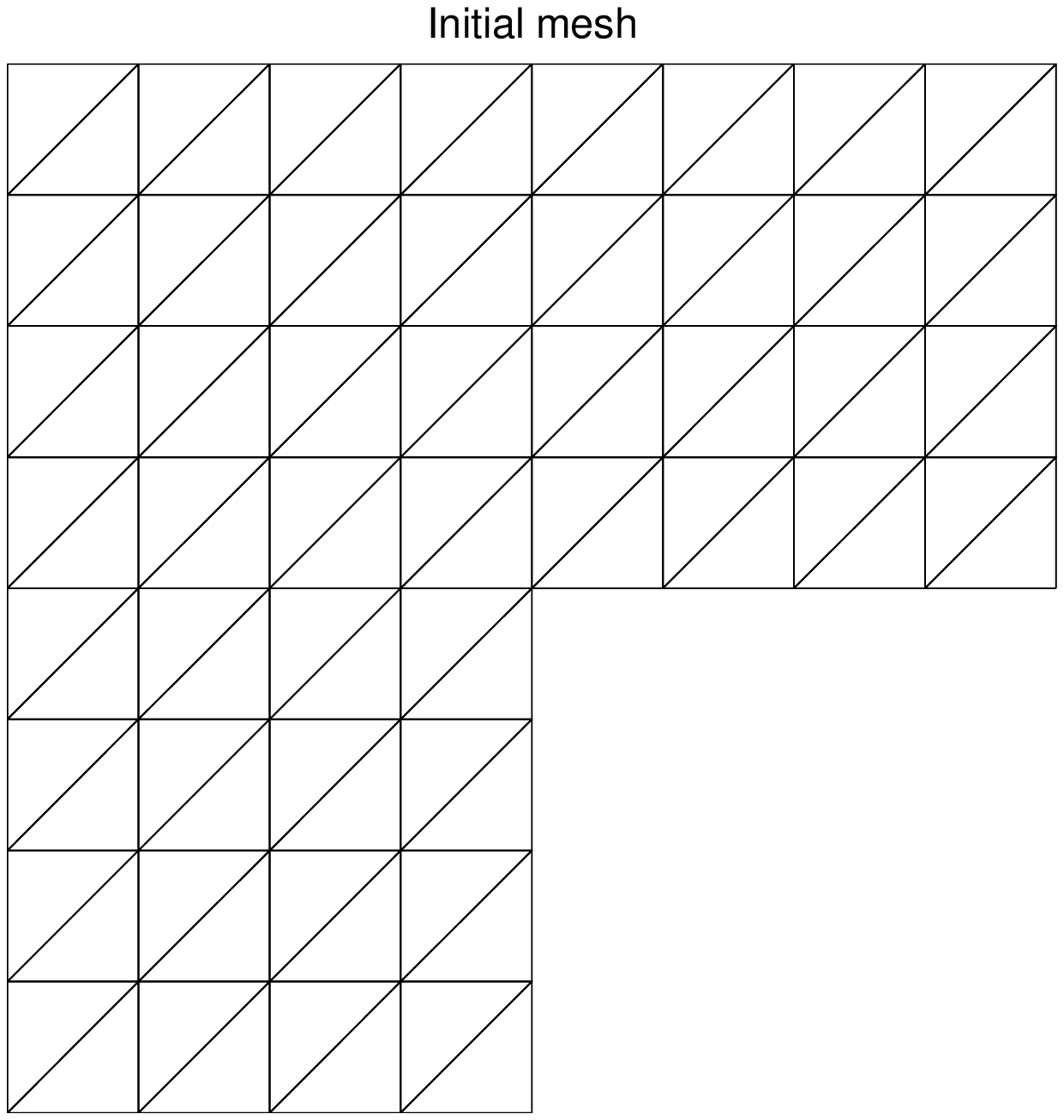}
\includegraphics[width=7cm,height=7cm]{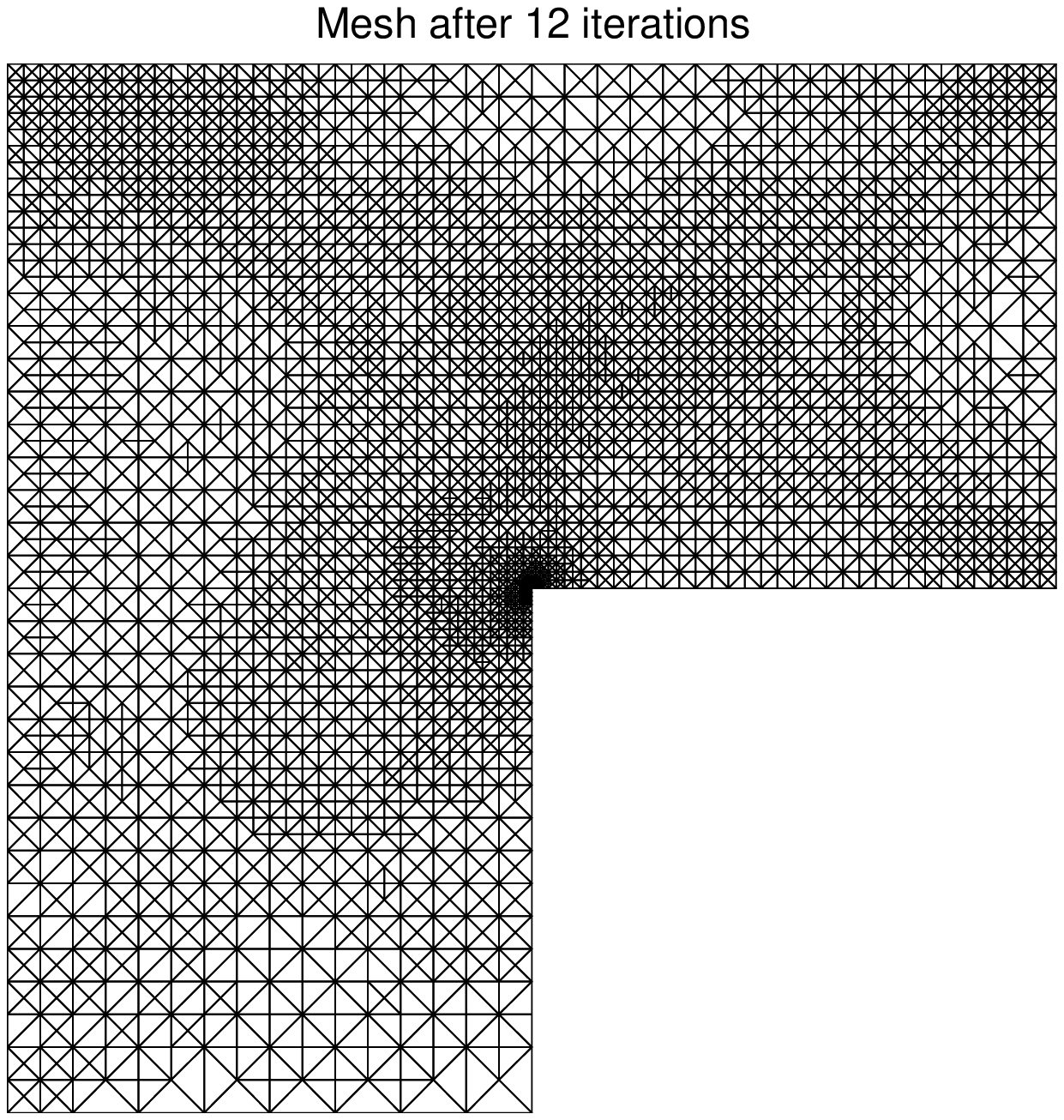}
\caption{The initial triangulation and the one after 12 adaptive iterations for
Example 3} \label{Mesh_AFEM_Exam_3}
\end{figure}
\begin{figure}[ht]
\centering
\includegraphics[width=7cm,height=7cm]{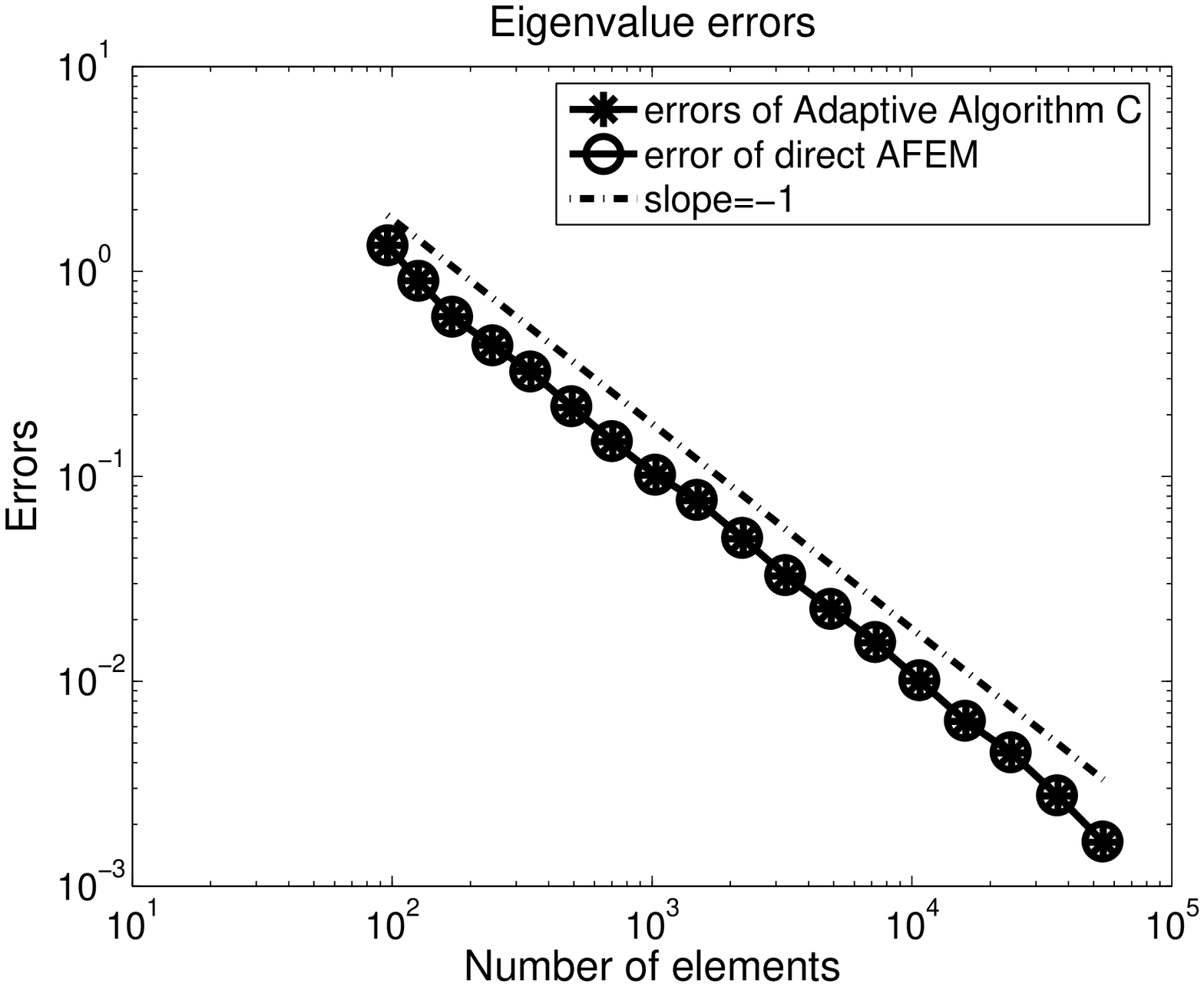}
\includegraphics[width=7cm,height=7cm]{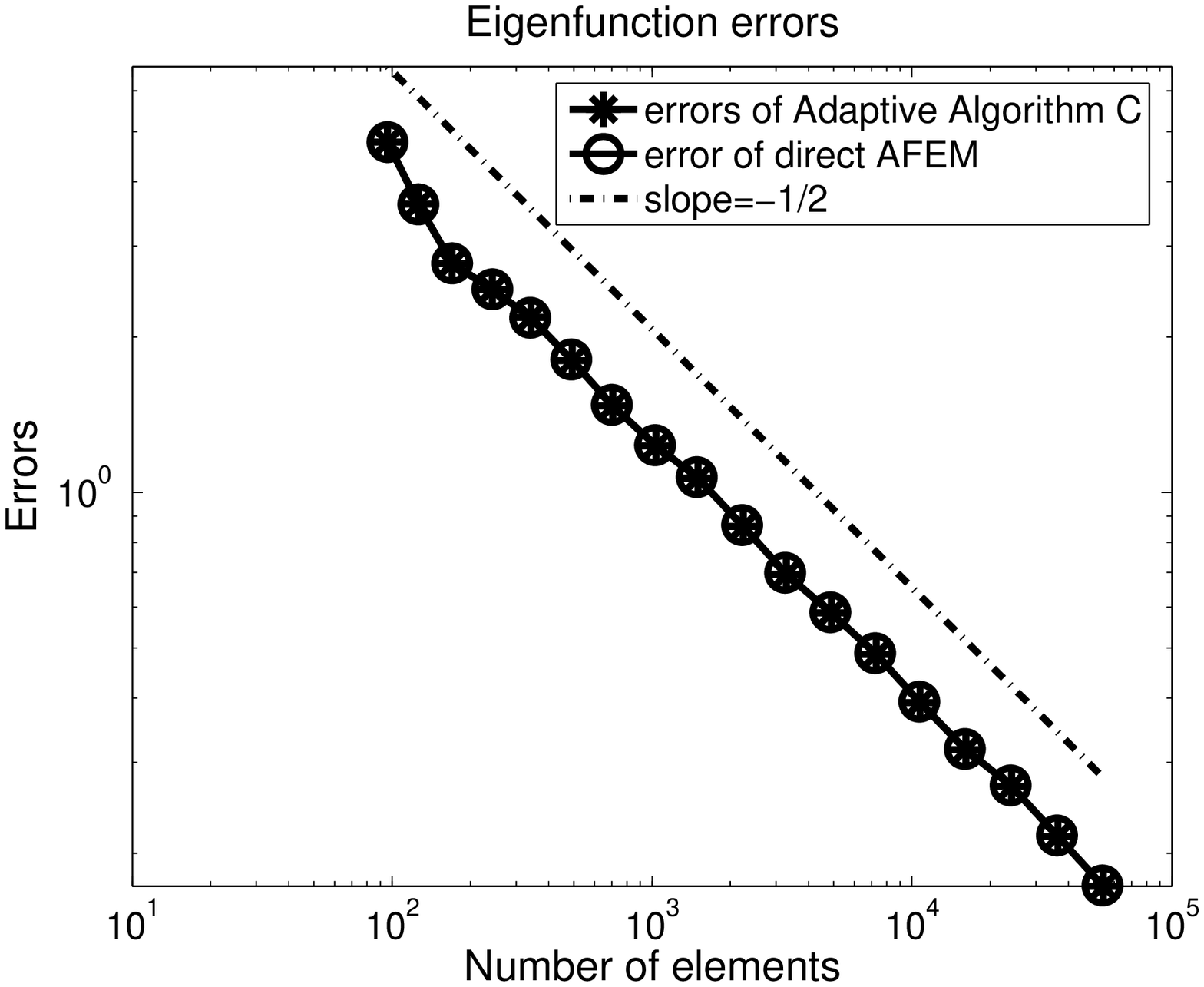}
\caption{The errors of the smallest eigenvalue and the associated eigenfunction approximations by {\bf Adaptive Algorithm $C$}
and direct AFEM for Example 3} \label{Convergence_AFEM_Exam_3_First}
\end{figure}

We also test {\bf Adaptive Algorithm $C$} for $5$ smallest eigenvalue approximations and their associated
eigenfunction approximations.  Figure \ref{Convergence_AFEM_Exam_3_5_Small} shows the a posteriori
error estimator $\eta_h(u_h,\Omega)$ produced by {\bf Adaptive Algorithm $C$} and direct AFEM.
\begin{figure}[ht]
\centering
\includegraphics[width=7cm,height=7cm]{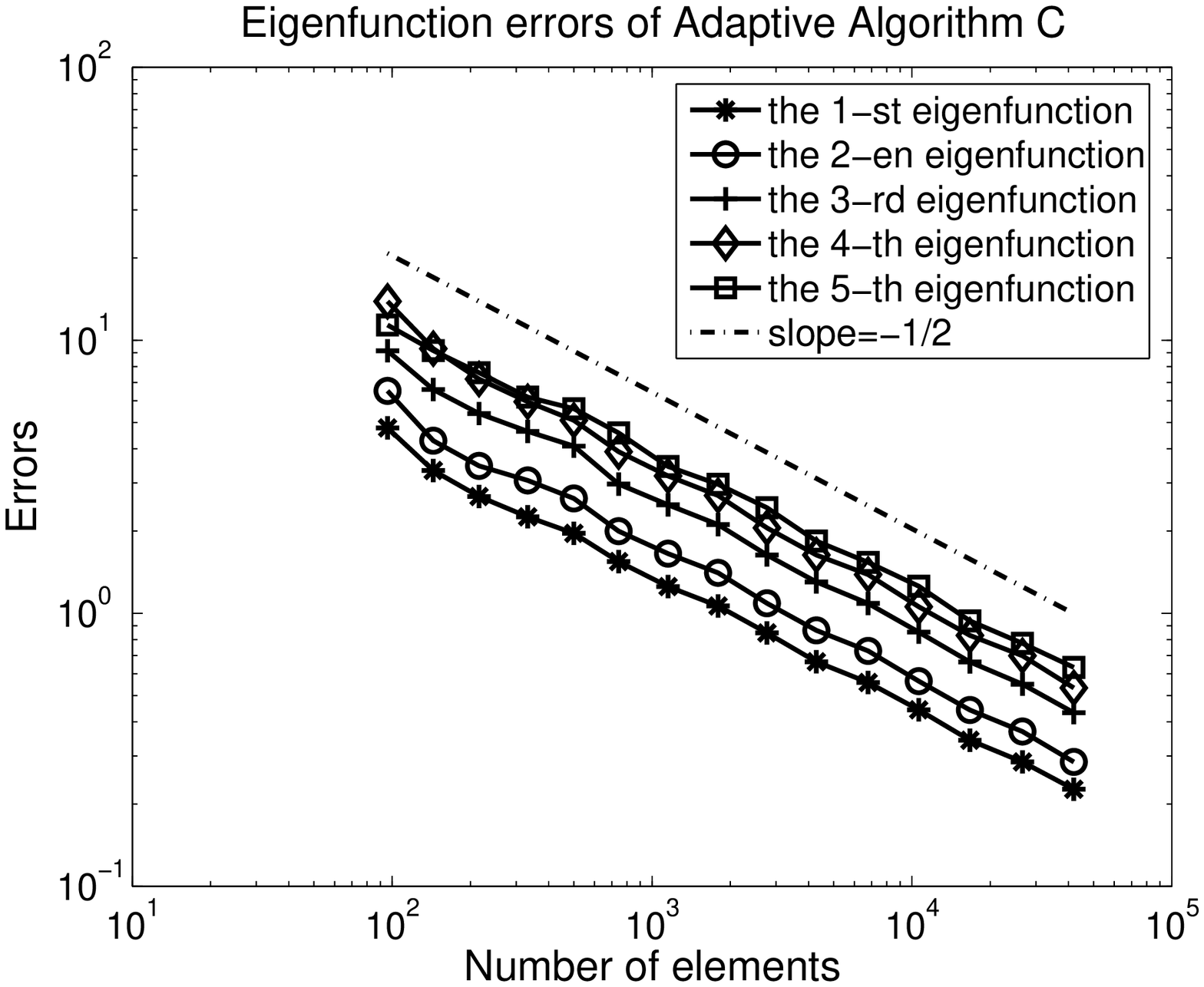}
\includegraphics[width=7cm,height=7cm]{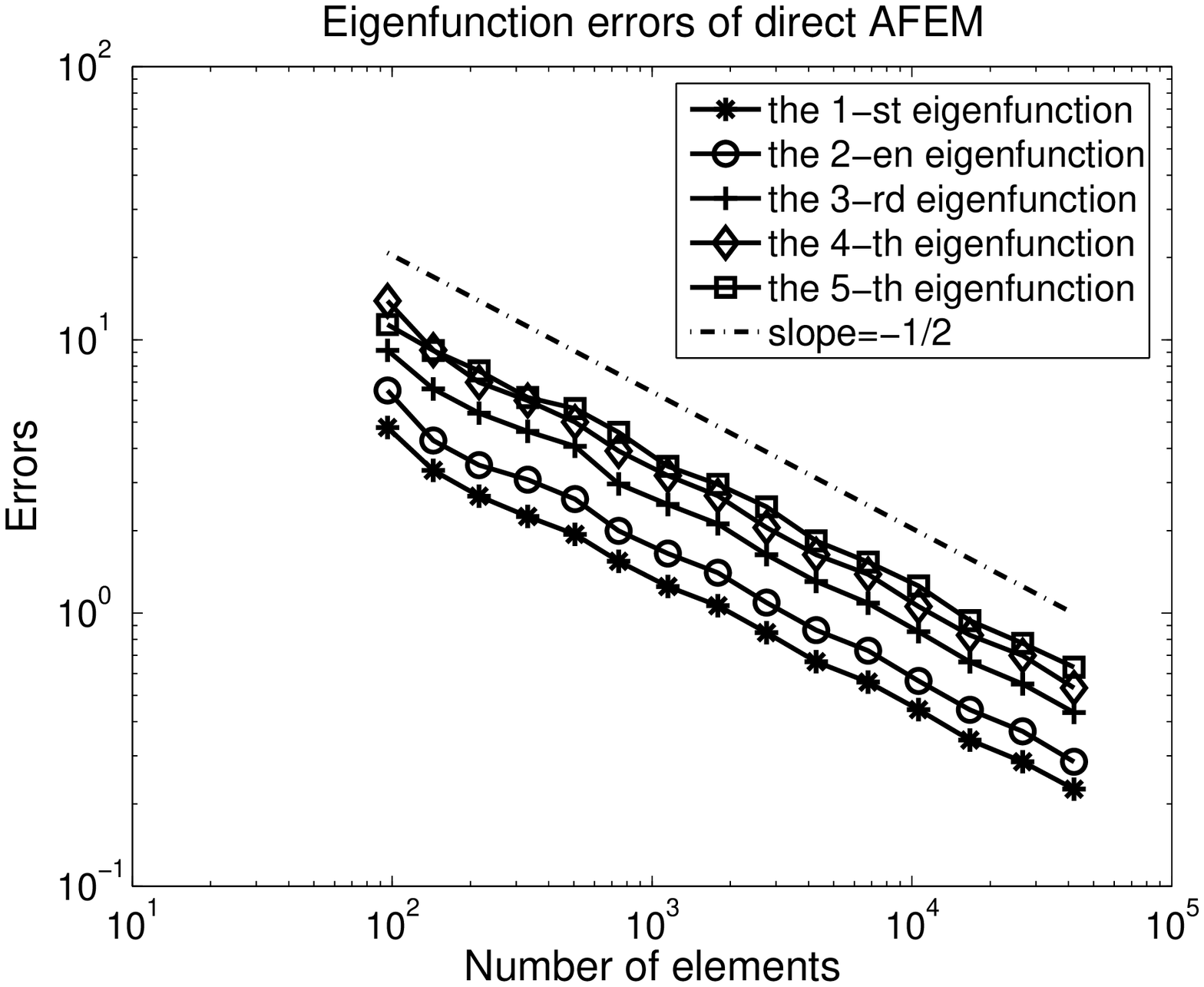}
\caption{The a posteriori error estimates of the eigenfunction approximations by {\bf Adaptive Algorithm $C$}
and direct AFEM for Example 3} \label{Convergence_AFEM_Exam_3_5_Small}
\end{figure}
From Figures \ref{Convergence_AFEM_Exam_3_First} and \ref{Convergence_AFEM_Exam_3_5_Small},
we can find the approximations of eigenvalues as well as eigenfunctions have optimal convergence
rate.

\section{Concluding remarks}
In this paper, we present a type of AFEM for eigenvalue problem based on multilevel
correction scheme. The convergence and optimal complexity have also been proved based on a relationship
between the eigenvalue problem and the associated boundary value problem (see Theorem \ref{trans}).
We also provide some numerical experiments to demonstrate the efficiency of the AFEM for eigenvalue problems.


\end{document}